\def\tank#1{\protected@xdef\@thanks{\@thanks
        \protect\footnotetext[0]{#1}}}
\def\bigfoot{

    \@footnotetext}
\newcommand{\ea}{\end{array}}
\newtheorem{theorem}{Theorem}[section]
\newtheorem{proposition}{Proposition}[section]
\newtheorem{lemma}{Lemma}[section]
\newtheorem{definition}{Definition}[section]
\newtheorem{Rem}{Remark}[section]
\newenvironment{proof}{Proof.}
\title
{\bf Random attractor for the 3D viscous primitive equations driven by  fractional noises \thanks{This work was partially
supported by NNSF of China(Grant No. 11401057),   Natural Science Foundation Project of CQ  (Grant No. cstc2016jcyjA0326),
Fundamental Research Funds for the Central Universities(Grant No. 106112015CDJXY100005) and China Scholarship Council (Grant No.:201506055003).} }
\author{
 Guoli Zhou
\thanks{ Chong Qing University, P.R. China }
\tank{E-mail:zhouguoli736@126.com.}
}
\begin{document}
\maketitle

\begin{abstract}
We develop a new and general method to prove the the existence of the random attractor (strong attractor) for the  primitive equations (PEs) of large-scale ocean and atmosphere dynamics under $non$-$periodic$ boundary conditions and driven by
 infinite-dimensional additive fractional Wiener processes. In contrast to our new method, the common method, compact Sobolev embedding theorem, is to obtain the uniform $a$ $priori$ estimates in some Sobolev space whose regularity is high enough. But this is very complicated for the 3D stochastic
PEs with the $non$-$periodic$ boundary conditions. Therefore, the existence of universal attractor ( weak attractor) was established in previous work.
 The main idea of our method is that we first derive that $\mathbb{P}$-almost surely the solution operator of stochastic PEs is compact. Then we construct a compact absorbing set by virtue of the compact property of the the solution operator and the existence of a absorbing set. We should point out that our method has some advantages over the common method of using compact Sobolev embedding theorem, i.e., if the random attractor in some Sobolev space do exist in view of the common method, our method would then further implies the existence of random attractor in this space. The present work provides a general way for proving the existence of random attractor for common classes of  dissipative stochastic partial differential equations and
 improves the existing results concerning random attractor of stochastic PEs. In a forth coming paper, we use this new method to prove the existence of strong attractor for the stochastic moist primitive equations, improving the results, the existence of weak (universal) attractor of the deterministic model.
\end{abstract}

\noindent{\it Keywords:} \small Primitive equations, Fractional noise, Random attractor

\noindent{\it {Mathematics Subject Classification (2000):}} \small
{60H15, 35Q35.}

\section{Introduction}
\par
The paper is concerned with the PEs in a bounded domain with fractional noises. To outline its content in detail, we introduce a smooth bounded domain $M\subset \mathbb{R}^{2}$ and the cylindrical domain $\mho=M\times (-h,0)\subset \mathbb{R}^{3},$ and consider the following 3D stochastic PEs of Geophysical Fluid Dynamics.
\begin{eqnarray*}
&&\partial_{t} \upsilon+(\upsilon\cdot \nabla)\upsilon+ w \partial_{z} \upsilon +f \upsilon^{\perp} +\nabla p+L_{1}\upsilon=\dot{W}_{1}^{H},\\
&&\partial_{z}p +T=0,\\
&&\nabla\cdot \mathbf{\upsilon}+\partial_{z} w=0,\\
&&\partial_{t} T+\upsilon\cdot \nabla T+w \partial_{z} T+L_{2}T=Q+ \dot{W}_{2}^{H}.
\end{eqnarray*}
The unknowns for the 3D stochastic viscous PEs are the fluid velocity field $(\upsilon,w )=(\upsilon_{1},\upsilon_{2},w )\in \mathbb{R}^{3}$ with  $\upsilon=(\upsilon_{1},\upsilon_{2})$ and $ \upsilon^{\perp}=(-\upsilon_{2}, \upsilon_{1} ) $ being horizontal, the temperature $T$ and the pressure $p.$
$f=f_{0}(\beta+y)$ is the given Coriolis parameter, $Q$ is a given heat source. The viscosity and the heat diffusion operators $L_{1}$ and $L_{2}$ are given by
\begin{eqnarray*}
L_{i}=-\nu_{i}\Delta-\mu_{i}\partial_{zz} , \ \ \ i=1,2.
\end{eqnarray*}
Here the positive constants $\nu_{1}, \mu_{1}$ are the horizontal and vertical Reynolds numbers, respectively, and
$\nu_{2}, \mu_{2}$ are positive constants which stand for the horizontal and vertical heat diffusivity, respectively. To simplify the nations, we assume
$ \nu_{i}= \mu_{i}=1,\ \ i=1,2.$ The results in this paper are still valid when we consider the general cases. We set
$\nabla = (\partial_{x}, \partial_{y})$ to be the horizontal gradient operator and $\Delta=\partial_{x}^{2}+ \partial_{y}^{2}$
to be the horizontal Laplacian. Here, we take $\dot{W}^{H}_{i}(t,x,y,z),i=1,2$, the informal derivative for the fractional Wiener process $W^{H}_{i}$ given below.
\par
The boundary of $\mho$ is partitioned into three parts: $\Gamma_{u}\cup\Gamma_{b}\cup\Gamma_{s}, $ where
\begin{eqnarray*}
&&\Gamma_{u}=\{(x,y,z)\in \overline{\mho}:z=0 \},\\
&&\Gamma_{b}=\{(x,y,z)\in \overline{\mho}:z=-h \},\\
&&\Gamma_{s}=\{(x,y,z)\in \overline{\mho}:(x,y)\in \partial M, -h \leq z\leq 0 \}.
\end{eqnarray*}
Here $h$ is a sufficiently smooth function. Without loss generality, we assume $h=1.$
We consider the following boundary conditions of the stochastic 3D viscous PEs.
\begin{eqnarray*}
&&\partial_{z} \upsilon =\eta,\ \ w=0,\ \ \partial_{z} T=-\alpha (T-\tau)\ \ \mathrm{on}\  \Gamma_{u},\\
&&\partial_{z} \upsilon=0,\ \ w=0,\ \ \partial_{z} T=0\ \ \mathrm{on}\  \Gamma_{b},\\
&&\upsilon\cdot \vec{n}=0,\ \ \partial_{\vec{n}}\upsilon\times \vec{n} =0,\ \  \partial_{\vec{n}} T=0\ \ \mathrm{on}\  \Gamma_{s},
\end{eqnarray*}
where $\eta(x,y)$ is the wind stress on the surface of the ocean, $\alpha$ is a positive constant, $\tau$ is the typical temperature distribution on the top surface of the ocean and $\vec{n}$ is the norm vector to $\Gamma_{s}.$ We assume for the sake of simplicity that $Q$ is independent of time and $\eta=\tau=0$. It is worth pointing that results presented in the paper can still be obtained provided some simple modifications are made.  By elementary calculus, we have that
\[
w(x,y,z,t)=-\int_{-1}^{z}\nabla\cdot \upsilon(x,y,\lambda,t)d\lambda,
\]
\[
p(x,y,z,t)=p_{s}(x,y,t)-\int_{-1}^{z}T(x,y,\lambda,t)d\lambda.
\]
Using the fact, we obtain the following equivalent 3D stochastic PEs:
\begin{eqnarray}
\partial_{t} \upsilon\!\!\!\!&+&\!\!\!\!L_{1}\upsilon+(\upsilon\cdot \nabla)\upsilon-\Big{(}\int_{-1}^{z}\nabla\cdot \upsilon(x,y,\lambda,t)d\lambda\Big{)} \partial_{z}\upsilon \nonumber \\
&&+\nabla p_{s}(x,y,t)-\int_{-1}^{z}\nabla T(x,y,\lambda,t)d\lambda +f \upsilon^{\perp} =\dot{W}^{H}_{1};\\
\partial_{t} T\!\!\!\!&+&\!\!\!\!\upsilon\cdot \nabla T-\Big{(}\int_{-1}^{z}\nabla\cdot \upsilon(x,y,\lambda,t)d\lambda\Big{)} \partial_{z} T+L_{2}T=Q+\dot{W}^{H}_{2};\\
\partial_{z} \upsilon\!\!\!\!\!\!&|&\!\!\!\!\!\!_{\Gamma_{u}}=\partial_{z} \upsilon|_{\Gamma_{b}}=0,
\upsilon\cdot \vec{n}|_{\Gamma_{s}}=0, \partial_{\vec{n}}\upsilon\times \vec{n}|_{\Gamma_{s}}=0;\\
\Big{(}\partial_{z}T\!\!\!\!&+&\!\!\!\!\alpha T\Big{)}|_{\Gamma_{u}}=\partial_{z}T|_{\Gamma_{b}}=0, \ \ \partial_{\vec{n}}T|_{\Gamma_{s}}=0;\\
\upsilon(x,y,\!\!\!\!\!\!&z&\!\!\!\!\!\!,0)=\upsilon_{0}(x,y,z), \ \ T(x,y,z,0)=T_{0}(x,y,z).
\end{eqnarray}

\par
The Primitive Equations are the basic model used in the study of climate and
weather prediction, which describe the motion of the atmosphere when the hydrostatic
assumption is enforced $\cite{G,H1,H2}$. As far as we know, their mathematical study was initiated by J.L.Lions, R.Teman and S.Wang$(\cite{LTW1}-\cite{LTW4})$. And this research field has developed and  has received
considerable attention from the mathematical community over the last two decades.
Lions, Temam
and Wang $\cite{LTW2}$ obtained the existence of global weak solutions for the primitive equations. Guill$\acute{e}$n-Gonz$\acute{a}$lez, et al. $\cite{GMR}$ obtained the global existence of strong solutions to the primitive equations with small initial data. Moreover, they proved the local existence of strong solutions to the equation. The local existence of strong solutions to the primitive equations under the small depth hypothesis was studied by Hu et al. $\cite{HTZ}$. Taking advantage of the fact that the pressure is essentially two-dimensional in the PEs, Cao and Titi $\cite{CT1}$ proved the global results for the existence of strong solutions of the full
three-dimensional PEs. Subsequently, I. Kukavica and M. Ziane $\cite{KZ}$ developed a different proof which allows one to treat non-
rectangular domains as well as different, physically realistic, boundary conditions. The existence of
the global attractor is given by Ju $\cite{J}$.  For the PEs with partial dissipation, we refer the reader to the
papers $\cite{CIN, CLT1, CLT2, CLT3, CT2}$.

\par
Despite the developments in the deterministic case, the theory for the
stochastic PEs remains underdeveloped. B. Ewald, M. Petcu, R. Teman $\cite{EPT}$ and N. Glatt-Holtz, M. Ziane $\cite{GHZ}$ considered a two-dimensional stochastic PEs. Then N. Glatt-Holtz and R. Temam $\cite{GHT1, GHT2}$ extended the case to the greater generality of physically relevant boundary conditions and nonlinear multiplicative noise. Following the methods closer to $\cite{CT1}$, Boling Guo and Daiwen Huang $\cite{GH}$ studied the global well-posedness of the three-dimensional system with a additive noise in the horizontal momentum equations and obtained some kind of weak type compactness properties of the solutions to the stochastic system. Using methods different from $\cite{GH}$, A. Debussche, N. Glatt-Holtz, R. Temam and M. Ziane considered three-dimensional system with multiplicative noise.

\par
Although the PEs express very fundamental laws of physics, the deterministic models are numerically intractable.
Studies have shown that resolved states are associated with many possible unresolved states.
This calls for stochastic methods for numerical weather and climate prediction which potentially allow a proper
representation of the uncertainties, a reduction of systematic biases and improved representation of long-term
climate variability.
Furthermore, while current deterministic parameterization schemes are inconsistent with
the observed power-law scaling of the energy spectrum,  new statistical dynamical approaches that are underpinned by exact stochastic model representations have emerged that overcome this limitation. The observed
power spectrum structure is caused by cascade processes which can be best represented by a stochastic $\mathbf{non}$-$\mathbf{Markovian}$ Ansatz. $\mathbf{Non}$-$\mathbf{Markovian}$ terms are necessary
to model $\mathbf{memory}$ $\mathbf{effects}$ due to model reduction. It means that in order to make skillful predictions the
model has to take into account also past states and not only the current state (as for a Markov process). For more details, please refer to $\cite{CJJ}$ and other references.

\par
   Based on the fact, we consider stochastic PEs, where both the horizontal velocity field and the temperature
 are perturbed by $\mathbf{fractional}$  $\mathbf{Brownian}$  $ \mathbf{motion}$ (fBm). We define stochastic integrals through pathwise generalized Stieltjes integrals as is the case in $\cite{MN,NR,Z}$. So far, various forms of stochastic integrals with respect to fBm have been developed by several authors (see Chapter 5 in Nualart $\cite{N3}$  and the references
therein). The theory of stochastic partial differential equations of parabolic type driven by an fractional noise have received much attention
(see Maslowski and Nualart $\cite{MN}$, Maslowski and Schmalfuss $\cite{MS}$,
Nualart $\cite{N1,N2}$,  Nualart and Vuillermot $\cite{NV}$, Tindel, Tudor and Viens $\cite{TTV}$, and the references therein).

\par
In this article, we mainly study the existence of random attractor for the stochastic PEs perturbed by fractional noises. We should point out that the definitions of the attractors between our paper and $\cite{GH}$ have essential differences. The random attractor obtained in this work is $\mathbb{P}$-a.e. $\omega$ compact in $(H^{1}(\mho))^{3}$ and attracts any orbit starting from $-\infty$ in the strong topology of $(H^{1}(\mho))^{3}$. While the attractor studied in $\cite{GH}$ is not necessary  a compact subset in  $(H^{1}(\mho))^{3}$. In addition, the attractor attracts any orbit in the weak topology of $(H^{1}(\mho))^{3}$.

\par
To consider the dynamical behavior of the strong solution of $3$D stochastic PEs, we will encounter the following difficulties.

\par
The first difficulty involved here is to study moment estimates and growth property of the Ornstein-Uhlenbeck (O-U) processes driven by fractional noises. Since fBm is not a semimartingale and has not independent increment, we can not follow the common method to use the It$\hat{o}$ isometry and the law of large numbers to study its properties. In addition, the integrals with H$\ddot{o}$lder continuous integrators are defined pathwise, which is a qualitative difference to the definition of the classical stochastic integral where the integrand is a white noise. We overcome the difficulties by using analysis techniques and taking advantage of stationary increments and polynomial growth property as well as regularity of fBm. The fundamental results can apply to  the study of long time behavior of stochastic partial differential equations driven by fractional noises.

\par
Secondly, to prove the existence of the random attractor, by virtue of the common method Sobolev compact theorem we should obtain the uniform $a$ $priori$ estimates with respect to initial data in $(H^{2}(\mho))^{3}$(see $\cite{CDF,CF}$). But it is very difficult to achieve under the $non$-$periodic$ boundary conditions. To overcome the difficulty, we need to establish a new method to obtain  a compact absorbing ball in $(H^{1}(\mho))^{3}$ which guarantee the existence of random attractor in $(H^{1}(\mho))^{3}$. The main idea of our new method is that we first try to derive that $\mathbb{P}$-almost surely the solution operator of stochastic PEs is compact in $(H^{1}(\mho))^{3} .$ Then we construct a compact absorbing ball in the strong solution space $(H^{1}(\mho))^{3}$ by using the the solution operator to act on a absorbing ball.

\par
Thirdly, showing the compact property of solution operator is still difficult. To overcome the difficulty, we take advantage of the regularity of solution operator and Aubin-Lions Lemma to achieve our goal. Specifically, we establish the continuity of the strong solutions to the 3D stochastic PEs in the space $(H^{1}(\mho))^{3}$ with respect to time $t$ and with respect to the initial condition $(\upsilon_{0}, T_{0}).$ Notice that $\cite{GH}$ only proved the strong solution is Lipschitz continuous in the space $( L^{2}(\mho))^{3}$ with respect to the initial data but this is not enough to study the asymptotical behavior in $(H^{1}(\mho))^{3}$ considered  here. The new difficulty arose here in obtaining the regularities of the strong solution about time $t$ and initial condition is that we have no valid boundedness  for the derivatives of the vertical velocity. To overcome the difficulty, the special geometry involved with the vertical velocity is used to obtain delicate $a\ priori$ estimates.

\par
It is important to point out that our method develops a general way for proving the existence of random attractor for common classes of dissipative stochastic partial differential equations and has some advantages over the common method of using compact Sobolev embedding theorem, i.e., if an absorbing ball for the solutions in space $(H^{2}(\mho))^{3}$ does exist, our method would then further imply the existence of global random attractor in $(H^{2}(\mho))^{3}$. In our forth coming paper $\cite{WZ}$ using the new method we prove the existence of strong attractor for the stochastic moist primitive equations, which improves the results, the existence of weak (universal) attractor for the deterministic model in $\cite{GH1}$.
\par
The remaining of this paper is organized as follows. In section $2,$ we state some preliminaries and recall some results.  The main result is presented in section $3,$ and section $4$ is for its
proof. Finally, in section $5$, the appendix of our work, we obtain the $a$ $priori$ estimates for the global existence of the strong solutions to the stochastic PEs. As usual, constants
$C$ may change from one line to the next, unless, we give a special declaration
; we denote by $C(a)$ a constant which depends on some parameter $a.$
\section{Preliminaries}
For $1\leq p\leq \infty,$ let $L^{p}(\mho), L^{p}(M)$ be the usual Lebesgue spaces with the norm $|\cdot|_{p}$ and $|\cdot|_{L^{p}(M)}$
respectively. If there is no confusion, we will write $|\cdot|_{p}$ instead of $|\cdot|_{L^{p}(M)}.$ For a positive integer
$m,$ we denote by $(H^{m,p}(\mho), \|\cdot\|_{m,p})$ and
$(H^{m,p}(M),\|\cdot\|_{H^{m,p}(M)})$ the usual Sobolev spaces, see($\cite{ARA}$). When $p=2,$ we denote by
$(H^{m}(\mho), \|\cdot\|_{m})$ and $ (H^{m}(M),\|\cdot\|_{H^{m}(M)})$ for short.
Without confusion, we shall sometime abuse notation and denote by $\|\cdot\|_{m}$ the norm in $ H^{m}(M)$.  Let
\begin{eqnarray*}
V_{1}=\{\mathbf{\upsilon}\in (C^{\infty}(\mho))^{2}&:& \partial_{z} {\upsilon} |_{z=0}=0, \partial_{z}\upsilon|_{z=-h}=0,\\
 &&\mathbf{\upsilon}\cdot \vec{n}|_{\Gamma_{s}}=0, \partial_{\vec{n}} \upsilon \times \vec{n}|_{\Gamma_{s}}=0,  \int_{-1}^{0}\nabla \cdot \mathbf{\upsilon} dz=0 \},
\end{eqnarray*}
\begin{eqnarray*}
V_{2}=\{T\in C^{\infty}(\mho)&:& \partial_{z} T |_{z=-h}=0, (\partial_{z} T+\alpha T)|_{z=0}=0, \partial_{\vec{n}} T |_{\Gamma_{s}}=0 \}.
\end{eqnarray*}
We denote by $\mathcal{V}_{1}$ and $\mathcal{V}_{2}$ be the closure spaces of $V_{1}$ in $(H^{1}(\mho))^{2},$ and $V_{2}$ in $H^{1}(\mho)$ under $H^{1}-$topology, respectively. Let $H_{1}$ be the closure space of $V_{1}$ with respect to the norm $|\cdot|_{2}.$ Define $H_{2}=L^{2}(\mho).$ Set
$$\mathcal{V}= \mathcal{V}_{1}\times \mathcal{V}_{2}, \mathcal{H}=H_{1}\times H_{2} .$$
Let $U_{i}:=(\mathbf{\upsilon}_{i},T_{i})$ be the horizontal velocity and temperature with $i=1,2$. We equip $\mathcal{V}$ with the inner product
\begin{eqnarray*}
\langle U_{1},  U_{2}\rangle_{\mathcal{V}}&:=&\langle \mathbf{\upsilon}_{1},  \mathbf{\upsilon}_{2}\rangle_{\mathcal{V}_{1}}+\langle T_{1},  T_{2}\rangle_{\mathcal{V}_{2}},\\
\langle \mathbf{\upsilon}_{1},  \mathbf{\upsilon}_{2}\rangle_{\mathcal{V}_{1}}&:=&\int_{\mho}(\nabla \mathbf{\upsilon}_{1}\cdot \nabla \mathbf{\upsilon}_{2}+\partial_{z} \mathbf{\upsilon}_{1}\cdot \partial_{z} \mathbf{\upsilon}_{2})d\mho,\\
\langle T_{1},  T_{2}\rangle_{\mathcal{V}_{2}}&:=&\int_{\mho}(\nabla T_{1}\cdot \nabla T_{2}+\partial_{z} T_{1}\cdot \partial_{z} T_{2})d\mho+\alpha\int_{\Gamma_{u}} T_{1}T_{2}d\Gamma_{u}.
\end{eqnarray*}
Subsequently, the norm of $\mathcal{V}$ is defined by $\|U_{i}\|_{1}=\langle U_{i},  U_{i}\rangle^{\frac{1}{2}}.$  We define the inner product of $\mathcal{H}$ by
\begin{eqnarray*}
\langle U_{1},  U_{2}\rangle_{\mathcal{H}}&:=&\langle \mathbf{\upsilon}_{1},  \mathbf{\upsilon}_{2}\rangle+\langle T_{1},  T_{2}\rangle,\\
\langle \mathbf{\upsilon}_{1},  \mathbf{\upsilon}_{2}\rangle&:=&\int_{\mho} \mathbf{\upsilon}_{1}\cdot  \mathbf{\upsilon}_{2}d\mho,\\
\langle T_{1},  T_{2}\rangle&:=&\int_{\mho} T_{1}\cdot  T_{2}d\mho.
\end{eqnarray*}
Denote $\mathcal{V}_{i}'$ the dual space of $\mathcal{V}_{i}$ with $i=1,2.$ And define the linear operator $A_{i}:\mathcal{V}_{i}\mapsto \mathcal{V}_{i}',i=1,2$ as follows:
\begin{eqnarray*}
 \langle A_{1}v_{1},    v_{2} \rangle=\langle v_{1},   v_{2} \rangle_{\mathcal{V}_{1} }, \forall\ v_{1},v_{2}\in  \mathcal{V}_{1};\ \ \ \  \langle A_{2}T_{1},    T_{2}\rangle
 =\langle T_{1},   T_{2} \rangle_{\mathcal{V}_{2} }, \forall\ T_{1},T_{2}\in  \mathcal{V}_{2}.
\end{eqnarray*}
Let $D(A_{i}):=\{\eta\in \mathcal{V}_{i}, A_{i}\eta\in H_{i}    \}.$ Because $A_{i}^{-1}$ is a self-adjoint compact operators in $H_{i},$ thanks to the classic spectral theory, we can define the power $A_{i}^{s}$ for any $s\in \mathbb{R}.$ Then $D(A_{i})'= D(A_{i}^{-1})$ is the dual space of $D(A_{i})$ and
$V_{i}=D(A_{i}^{\frac{1}{2}}), V_{i}'=D(A_{i}^{-\frac{1}{2}}) .$ Furthermore, we have the compact embedding relationship
\begin{eqnarray*}
D(A_{i})\subset V_{i}\subset H_{i}\subset V_{i}'\subset D(A_{i})',
\end{eqnarray*}
and
\begin{eqnarray*}
\langle \cdot,  \cdot \rangle_{\mathcal{V}_{i}}=\langle A_{i}\cdot, \cdot    \rangle= \langle A_{i}^{\frac{1}{2}}\cdot, A_{i}^{\frac{1}{2}}\cdot\rangle, \ \ i=1,2.
\end{eqnarray*}
And set $\alpha\in (0,1).$ For a function $f: [0,T]\rightarrow \mathbb{R}$, we define the Weyl derivative $D^{\alpha}_{0+}f$ by
\begin{eqnarray*}
D^{\alpha}_{0+}f(t)=\frac{1}{\Gamma(1-\alpha)}\Big{(}\frac{f(t)}{t^{\alpha}}+\alpha\int_{0}^{t}\frac{f(t)-f(u)}{(t-u)^{\alpha+1}}du\Big{)},
\end{eqnarray*}
provided the singular integral in the right hand side exists for almost all $t\in(0,T),$ where $\Gamma$ denotes the Euler function. Similarly, we can define $D^{\alpha}_{T-}f(t)$ as
\begin{eqnarray*}
D^{\alpha}_{T-}f(t)=\frac{(-1)^{\alpha}}{\Gamma(1-\alpha)}\Big{(}\frac{f(t)}{(T-t)^{\alpha}}+\alpha\int_{t}^{T}\frac{f(t)-f(u)}{(u-t)^{\alpha+1}}du\Big{)}.
\end{eqnarray*}
Let $\phi\in L^{1}([0,T]),$ a Lebesgue space with values in $\mathbb{R}.$ Then the left and right-sided fractional Riemann-Liouville integrals of $\phi$ of order $\alpha$ are defined for almost
all $t\in (0,T)$ by
\begin{eqnarray*}
I_{0+}^{\alpha}\phi(t)=\frac{1}{\Gamma (\alpha)}\int_{0}^{t}(t-u)^{\alpha-1}\phi(u)du
\end{eqnarray*}
and
\begin{eqnarray*}
I_{T-}^{\alpha}\phi(t)=\frac{(-1)^{-\alpha}}{\Gamma (\alpha)}\int_{t}^{T}(u-t)^{\alpha-1}\phi(u)du,
\end{eqnarray*}
respectively. Let $f= I_{0+}^{\alpha}\phi$. Then the  Weyl derivative of $f$ exists and $D_{0+}^{\alpha}f=\phi.$ A similar result holds for the right-sided fractional integral. For the theory of fractional integrals and derivatives we refer to $\cite{SKM}.$
\par
Let $W^{\alpha,1}([0,T]; \mathbb{R})$ be the space of measurable functions $f:[0,T]\rightarrow \mathbb{R}$ such that
\begin{eqnarray*}
|f|_{\alpha,1}:=\int_{0}^{T}\Big{(}\frac{|f(s)|}{s^{\alpha}}+\int_{0}^{s}\frac{|f(s)-f(u)|}{(s-u)^{\alpha+1}}du  \Big{)}ds<\infty,
\end{eqnarray*}
where $0<\alpha<\frac{1}{2}.$  Similarly, we can define $W^{\alpha,1}([a,b]; \mathbb{R})$ with $a,b\in \mathbb{R}$ and $a<b$. Let
\begin{eqnarray*}
C_{\alpha}(g):= \frac{1}{\Gamma(1-\alpha)\Gamma(\alpha)}\sup\limits_{0<s<t<T}\Big{(}\frac{|g(t)-g(s)|}{(t-s)^{1-\alpha}}+\int_{s}^{t}\frac{|g(u)-g(s)|}{(u-s)^{2-\alpha}}du   \Big{)}.
\end{eqnarray*}
Following $\cite{Z}$ we define the generalized Stieltjes integral $\int_{0}^{T}fdg$ by
\begin{eqnarray}
\int_{0}^{T}fdg=(-1)^{\alpha}\int_{0}^{T}D^{\alpha}_{0+}f(s)D^{1-\alpha}_{T-}g_{T-}(s)ds,
\end{eqnarray}
where $g_{T-}(s)=g(s)-g(T).$ Under the above hypotheses the integral $\int_{0}^{t}fdg$ exists for all $t\in [0,T],$ and $(\mathrm{cf}.\cite{NR})$
\begin{eqnarray*}
\int_{0}^{t}fdg=\int_{0}^{T}f1_{(0,t)}dg.
\end{eqnarray*}
 Furthermore, we have
\begin{eqnarray}
|\int_{0}^{t}fdg| \leq C_{\alpha}(g)|f|_{\alpha,1}.
\end{eqnarray}
\par
Let $((B^{H}_{i}(t))_{t\in \mathbb{R}^{+}})_{i\in \mathbb{N}^{+}}$ be a sequence of one-dimensional, independent, identically distributed fractional Brownian motions with Hurst parameter $H\in (0,1),$ defined on the complete probability space $(\Omega, \mathcal{F}, \mathbb{P} )$ and starting at the origin. For $j\in \{1,2\},$  let $G_{j}$ be a linear, self-adjoint, positive trace-class operator with generating kernel $\eta_{j}$, and  we write $(e_{i,j})_{i\in \mathbb{N}^{+}},$ for an orthonormal basis of $H_{j},$ consisting of eigenfunctions of the operator $G_{j}$ and $(\lambda_{i,j})_{i\in \mathbb{N}^{+},}$ for the sequence of the corresponding eigenvalues.
We introduce the $H_{j}$-valued fractional Wiener process $(W^{H}_{j}(.,t))_{t\in \mathbb{R}^{+}}$ with $j=1,2,$ by setting
\begin{eqnarray}
W^{H}_{j}(.,t):=\sum^{\infty}_{i=1}\lambda_{i,j}^{\frac{1}{2}}e_{i,j}(.)B^{H}_{i}(t),
\end{eqnarray}
where the series converges a.s. in the strong topology of $H_{j}$ by virtue of the basic properties of the $B^{H}_{i}(t)$'s and the fact that $G_{j}$ is trace-class. From these basic properties, we can also conclude that $(W^{H}_{j}(.,t))_{t\in \mathbb{R}^{+},j=1,2}$ is a centered Gaussian process whose covariance is given by
\begin{eqnarray*}
&&\mathbb{E}(\langle W^{H}_{j}(.,s),\mathbf{\upsilon}\rangle \langle W^{H}_{j}(.,t),\hat{\mathbf{\upsilon}}\rangle )\\
&=&\frac{1}{2}\Big{(}s^{2H}+t^{2H}-|s-t|^{2H}\Big{)}\langle G_{j}\mathbf{\upsilon}, \hat{\mathbf{\upsilon}}\rangle\nonumber\\
&=&\frac{1}{2}\Big{(}s^{2H}+t^{2H}-|s-t|^{2H}\Big{)}\int_{\mho\times \mho} \eta_{j} \mathbf{\upsilon}\cdot\hat{\mathbf{\upsilon}}d\mho d\mho\nonumber
\end{eqnarray*}
for all $s,t\in \mathbb{R}^{+}$ and all $\mathbf{\upsilon}, \hat{\mathbf{\upsilon}}\in H_{j}$.
Take a parameter $\alpha\in (1-H, \frac{1}{2})$ which will be fixed throughout the paper.  Let $f\in W^{\alpha,1}([0,T];\mathbb{R}),$
we define
$$\int_{0}^{T}f(s)dB^{H}_{i}(s) $$
in sense of $(2.6)$ pathwise, since by $\cite{NR}$ we have $C_{\alpha}(B^{H}_{i})<\infty, \mathbb{P}-a.s.$ for $t\in \mathbb{R}^{+}$ and $i\in \mathbb{N}^{+}.$ Denote $\mathcal{L}(\mathcal{V}_{j})$ the space of linear bounded operators on $\mathcal{V}_{j}$ with $j=1,2.$ Suppose $l: \Omega \times [0,T]\rightarrow \mathcal{L}(\mathcal{V}_{j})$ be an operator-valued function such that
$l e_{i,j}\in W^{\alpha,1}([0,T];\mathcal{V}_{j})$ for $i\in \mathbb{N}^{+},\ \omega\in \Omega.$  We define
\begin{eqnarray}
\int_{0}^{T}\varphi(s)dW^{H}_{j}(s)&:=&\sum_{i=1}^{\infty}\int_{0}^{T}l(s)G^{\frac{1}{2}}_{j}e_{i,j}dB^{H}_{i}(s)\nonumber\\
&=&\sum_{i=1}^{\infty}\sqrt{\lambda_{i,j}}\int_{0}^{T}l(s)e_{i,j}dB^{H}_{i}(s),
\end{eqnarray}
the convergence of the sums in $(2.9)$ being understood as $\mathbb{P}-a.s.$ convergence in $\mathcal{V}_{j}.$
We consider the equation:
\begin{eqnarray}
dZ_{j}(t)=-A_{j} Z_{j}dt+dW^{H}_{j}(t),
\end{eqnarray}
with initial condition $Z_{j}(0)=0,\ j=1,2.$ The solution of $(2.10)$ can be understood pathwise in the mild sense, that is, the solution $(Z_{j}(t))_{t\in [0,T]}$ is a $\mathcal{V}_{j}$-valued process whose paths are with probability one elements of the space $W^{\alpha,1}([0,T];\mathcal{V}_{j})$ for $\alpha \in (1-H, \frac{1}{2}),$ such that
\begin{eqnarray}
Z_{j}(t)=\int_{0}^{t}S_{j}(t-s)dW^{H}_{j}(s),
\end{eqnarray}
where $S_{j}(t-s)=e^{-A_{j} (t-s)}, t\in [0,T].$
Denote by $0<\gamma_{1,j}\leq \gamma_{2,j}\leq \cdots$ the eigenvalues of $A_{j}$
with corresponding eigenvectors $e_{1,j},e_{2,j},\cdots $.
\begin{proposition}Assume $\tau>0$ and
\begin{eqnarray}
\sum_{i=1}^{\infty}\lambda_{i,1}^{\frac{1}{2}}\gamma_{i,1}^{\frac{5}{2}}<\infty,
\end{eqnarray}
then $(Z_{1}(t))_{t\in[0,\tau]}$ exists as a generalized Stieltjes integral in the sense of $\cite{Z}$ and we have
\begin{eqnarray*}
(Z_{1}(t))_{t\in[0, \tau]}\in C([0,\tau];(H^{3}(\mho))^{2})\ \ a.s.,
\end{eqnarray*}
where $ C([0,\tau];(H^{3}(\mho))^{2})$ is the set of continuous functions defined on $[0,\tau]$ with values in $(H^{3}(\mho))^{2}. $
\end{proposition}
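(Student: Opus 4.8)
\noindent The plan is to use that $G_1$ and $A_1$ are simultaneously diagonalized by $(e_{i,1})_{i\in\mathbb{N}^+}$, so that the mild solution $(2.11)$ splits into a series of scalar Ornstein--Uhlenbeck processes driven by the $B^H_i$. Since $S_1(t-s)e_{i,1}=e^{-\gamma_{i,1}(t-s)}e_{i,1}$, the definition $(2.9)$ gives
\begin{eqnarray*}
Z_1(t)=\sum_{i=1}^{\infty}\lambda_{i,1}^{\frac12}\,\zeta_i(t)\,e_{i,1},\qquad \zeta_i(t):=\int_0^t e^{-\gamma_{i,1}(t-s)}\,dB^H_i(s),
\end{eqnarray*}
where each $\zeta_i$ is a pathwise generalized Stieltjes integral. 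Thus the statement reduces to showing that this series converges in $C([0,\tau];(H^3(\mho))^2)$ $\mathbb{P}$-almost surely, for which I need a uniform-in-time pathwise bound and a time-increment bound for the $\zeta_i$, together with an elliptic estimate for the eigenfunctions $e_{i,1}$.

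\noindent First I would estimate $\zeta_i$ pathwise through $(2.7)$. For fixed $t$ the kernel $f_{i,t}(s)=e^{-\gamma_{i,1}(t-s)}$ lies in $W^{\alpha,1}([0,t];\mathbb{R})$, and a direct computation of $|f_{i,t}|_{\alpha,1}$ — bounding the first term by $\int_0^t s^{-\alpha}ds$ and the inner singular integral using $1-e^{-\gamma_{i,1}r}\le\gamma_{i,1}r$ together with $\int_0^t e^{-\gamma_{i,1}(t-s)}ds\le\gamma_{i,1}^{-1}$ — yields $|f_{i,t}|_{\alpha,1}\le C(\alpha,\tau)$ uniformly in $t\in[0,\tau]$ and in $i$, hence $\sup_{t\in[0,\tau]}|\zeta_i(t)|\le C\,C_\alpha(B^H_i)$. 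For the increments, for $t'<t$ I would split
\begin{eqnarray*}
\zeta_i(t)-\zeta_i(t')=\int_0^{t'}\big(e^{-\gamma_{i,1}(t-s)}-e^{-\gamma_{i,1}(t'-s)}\big)\,dB^H_i(s)+\int_{t'}^{t}e^{-\gamma_{i,1}(t-s)}\,dB^H_i(s),
\end{eqnarray*}
and apply $(2.7)$ on each interval. The mean value estimate $|e^{-\gamma_{i,1}(t-s)}-e^{-\gamma_{i,1}(t'-s)}|\le\gamma_{i,1}|t-t'|\,e^{-\gamma_{i,1}(t'-s)}$ is exactly what produces one extra factor $\gamma_{i,1}$, and carrying it through the $W^{\alpha,1}$-norm gives, uniformly in $t,t'\in[0,\tau]$,
\begin{eqnarray*}
|\zeta_i(t)-\zeta_i(t')|\le C\,\gamma_{i,1}\,|t-t'|^{1-\alpha}\,C_\alpha(B^H_i).
\end{eqnarray*}

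\noindent Next I would use the spectral description of $A_1$. As $\langle A_1v,v\rangle=\langle v,v\rangle_{\mathcal{V}_1}$ is equivalent to the square of the $(H^1(\mho))^2$-norm, the elliptic regularity of the primitive-equations operator gives $D(A_1^{3/2})\hookrightarrow(H^3(\mho))^2$, so that $A_1^{3/2}e_{i,1}=\gamma_{i,1}^{3/2}e_{i,1}$ yields $\|e_{i,1}\|_{3}\le C\gamma_{i,1}^{3/2}$ with $C$ independent of $i$. Combining the three estimates, the membership $Z_1(t)\in(H^3(\mho))^2$ follows from $\sup_t\|Z_1(t)\|_3\le C\sum_i\lambda_{i,1}^{1/2}\gamma_{i,1}^{3/2}C_\alpha(B^H_i)$, while the continuity in time follows from
\begin{eqnarray*}
\|Z_1(t)-Z_1(t')\|_{3}\le\sum_{i=1}^{\infty}\lambda_{i,1}^{\frac12}\|e_{i,1}\|_{3}\,|\zeta_i(t)-\zeta_i(t')|\le C\,|t-t'|^{1-\alpha}\sum_{i=1}^{\infty}\lambda_{i,1}^{\frac12}\gamma_{i,1}^{\frac52}\,C_\alpha(B^H_i).
\end{eqnarray*}
Because the $B^H_i$ are i.i.d. and $C_\alpha(B^H_i)<\infty$ $\mathbb{P}$-a.s. with a finite expectation $m$ independent of $i$ (indeed finite moments of all orders by Fernique's theorem), Tonelli's theorem gives $\mathbb{E}\sum_i\lambda_{i,1}^{1/2}\gamma_{i,1}^{5/2}C_\alpha(B^H_i)=m\sum_i\lambda_{i,1}^{1/2}\gamma_{i,1}^{5/2}<\infty$ by hypothesis $(2.12)$. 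Hence the random constant $C(\omega)=C\sum_i\lambda_{i,1}^{1/2}\gamma_{i,1}^{5/2}C_\alpha(B^H_i)$ is a.s. finite, so $t\mapsto Z_1(t)$ is $\mathbb{P}$-a.s. $(1-\alpha)$-Hölder continuous into $(H^3(\mho))^2$, which is the assertion.

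\noindent The hard part will be the pathwise Stieltjes estimates with the correct dependence on $\gamma_{i,1}$: one must verify that the $W^{\alpha,1}$-norm of the exponential kernel stays bounded uniformly in $\gamma_{i,1}$ (so that the sup bound carries no spurious power of $\gamma_{i,1}$) and that the increment bound loses exactly the single factor $\gamma_{i,1}$ coming from the mean value estimate, with all constants uniform in $i$; the appearance of the precise exponent $\tfrac52=\tfrac32+1$ in $(2.12)$ is dictated by this balance. A secondary point is to justify the termwise representation of $Z_1$ and the interchange of the infinite sum with the pathwise integral, and to record the uniform-in-$i$ moment control of $C_\alpha(B^H_i)$ used in the Tonelli step.
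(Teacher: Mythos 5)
Your proposal is correct and follows essentially the same route as the paper: decompose $Z_1$ along the common eigenbasis into scalar integrals, bound each via the pathwise estimate $(2.7)$ on the $W^{\alpha,1}$-norm of the exponential kernel, split increments into the tail integral plus the difference of semigroups, and conclude summability from $(2.12)$ together with the identical distribution (finite expectation) of the $C_\alpha(B^H_i)$. The only cosmetic difference is that the paper uses $1-e^{-x}\le x^{1/2}$ and obtains $\tfrac12$-H\"older continuity in time, while you use $1-e^{-x}\le x$ with interpolation to get $(1-\alpha)$-H\"older continuity; both stay within the weight $\gamma_{i,1}^{5/2}$ allowed by $(2.12)$.
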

\begin{proof}
According to Lemma $2.2$ in $\cite{MN}$ and $(2.7),$ there exists a finite, positive random variable $C_{\alpha}(B^{H}_{i}),$ depending only on $\alpha, B^{H}_{i}$ such that
\begin{eqnarray*}
\|Z_{1}(t)\|_{3}&\leq&C\sum_{i=1}^{\infty}\lambda_{i,1}^{\frac{1}{2}}|\int_{0}^{t}A_{1}^{\frac{3}{2}}S_{1}(t-s)e_{i,1}dB^{H}_{i}(s)|_{2}\nonumber\\
&\leq&C\sum_{i=1}^{\infty}\lambda_{i,1}^{\frac{1}{2}}C_{\alpha}(B^{H}_{i})|A_{1}^{\frac{3}{2}}S_{1}(t-s)e_{i,1}|_{\alpha,1}\nonumber\\
&=&C\sum_{i=1}^{\infty}\lambda_{i,1}^{\frac{1}{2}}C_{\alpha}(B^{H}_{i})\int_{0}^{t}\Big{(}\frac{|A_{1}^{\frac{3}{2}}S_{1}(t-s)e_{i,1}|_{2}}{s^{\alpha}}  \nonumber\\
&&+\int_{0}^{s}\frac{|A_{1}^{\frac{3}{2}}S_{1}(t-s)e_{i,1}-A_{1}^{\frac{3}{2}}S_{1}(t-u)e_{i,1}|_{2} }{(s-u)^{1+\alpha}}du\Big{)}ds.\nonumber\\
\end{eqnarray*}
Since the orthonormal basis $(e_{i,1})_{i\in \mathbb{N}^{+}}$ of $H_{1}$ consist of eigenfunctions of operator $A_{1}$ with corresponding eigenvalues $(\gamma_{i,1})_{i\in \mathbb{N}^{+}},$ we get
\begin{eqnarray*}
\|Z_{1}(t)\|_{3}&\leq&C\sum_{i=1}^{\infty}\lambda_{i,1}^{\frac{1}{2}}\gamma_{i,1}^{\frac{3}{2}}C_{\alpha}(B^{H}_{i})\int_{0}^{t}\frac{e^{-\gamma_{i,1}(t-s)}}{s^{\alpha}}ds\\
&&+\sum_{i=1}^{\infty}\lambda_{i,1}^{\frac{1}{2}}\gamma_{i,1}^{\frac{3}{2}}C_{\alpha}(B^{H}_{i})\int_{0}^{t}\Big{(}\int_{0}^{s}\frac{|e^{-\gamma_{i,1}(t-s)}-e^{-\gamma_{i,1}(t-u)}|}{(s-u)^{1+\alpha}}du\Big{)}ds\\
&\leq&C\sum_{i=1}^{\infty}\lambda_{i,1}^{\frac{1}{2}}\gamma_{i,1}^{\frac{3}{2}}C_{\alpha}(B^{H}_{i})\Big{(}\frac{t^{1-\alpha}}{1-\alpha}
+\int_{0}^{t}\Big{(}\int_{0}^{s} \frac{\gamma_{i,1}^{\frac{1}{2}}(s-u)^{\frac{1}{2}}}{(s-u)^{1+\alpha}}du\Big{)}ds\Big{)}\\
&\leq&C\sum_{i=1}^{\infty}\lambda_{i,1}^{\frac{1}{2}}\gamma_{i,1}^{2}C_{\alpha}(B^{H}_{i})({t^{1-\alpha}}+t^{\frac{3}{2}-\alpha}),
\end{eqnarray*}
where $\alpha\in (0, \frac{1}{2}).$ From $(2.12),$ we deduce
$$\sum_{i=1}^{\infty}\lambda_{i,1}^{\frac{1}{2}}\gamma_{i,1}^{2}E(C_{\alpha}(B^{H}_{i}))\leq
C\sum_{i=1}^{\infty}\lambda_{i,1}^{\frac{1}{2}}\gamma_{i,1}^{\frac{5}{2}}<\infty$$ by virtue of the fact that the $B^{H}_{i}$'s are identically distributed.
Therefore, for each $t\in [0,\tau]$
\begin{eqnarray}
\|Z_{1}(t)\|_{3}\leq C\sum_{i=1}^{\infty}\lambda_{i,1}^{\frac{1}{2}}\gamma_{i,1}^{2}C_{\alpha}(B^{H}_{i})<\infty\ a.s..
\end{eqnarray}
Furthermore, we obtain
\begin{eqnarray}
\|Z_{1}(t)-Z_{1}(s)\|_{3}&\leq& \|\int_{s}^{t}S_{1}(t-u)dW^{H}_{1}(u)\|_{3}\nonumber\\
&&+\|\int_{0}^{s}(S_{1}(t-u)-S_{1}(s-u))dW^{H}_{1}(u)\|_{3}\nonumber\\
&=&I_{1}+I_{2}.
\end{eqnarray}
Analogously to the derivation of $(2.13),$ we get for $I_{1}$
\begin{eqnarray*}
I_{1}\leq C\sum_{i=1}^{\infty}\lambda_{i,1}^{\frac{1}{2}}
\gamma_{i,1}^{2}C_{\alpha}(B^{H}_{i})({(t-s)^{1-\alpha}}+(t-s)^{\frac{3}{2}-\alpha}).
\end{eqnarray*}
As for $I_{2},$ from $(2.7)$ and $(2.14),$ we deduce
\begin{eqnarray*}
I_{2}&\leq& \sum_{i=1}^{\infty}\lambda_{i,1}^{\frac{1}{2}} \|\int_{0}^{s}(S_{1}(t-u)-S_{1}(s-u))e_{i,1}dB^{H}_{i}(u) \|_{3}\nonumber\\
&\leq&C\sum_{i=1}^{\infty}\lambda_{i,1}^{\frac{1}{2}}\gamma_{i,1}^{\frac{3}{2}}C_{\alpha}(B^{H}_{i})|(S_{1}(t-u)-S_{1}(s-u))e_{i,1}|_{\alpha,1}\nonumber\\
&=& C\sum_{i=1}^{\infty}\lambda_{i,1}^{\frac{1}{2}}\gamma_{i,1}^{\frac{3}{2}}C_{\alpha}(B^{H}_{i})\Big{(}\int_{0}^{s}\frac{|(S_{1}(t-u)-S_{1}(s-u))e_{i,1}|_{2}}{u^{\alpha}}du\nonumber\\
&&+\int_{0}^{s}\int_{0}^{u}\frac{|(S_{1}(t-u)-S_{1}(s-u))e_{i,1}-(S_{1}(t-s_{1})-S_{1}(s-s_{1}))e_{i,1}|_{2}}{(u-s_{1})^{1+\alpha}}ds_{1}du\Big{)}\nonumber\\
&=:& C\sum_{i=1}^{\infty}\lambda_{i,1}^{\frac{1}{2}}\gamma_{i,1}^{\frac{3}{2}}C_{\alpha}(B^{H}_{i}) (I_{3}+I_{4}).
\end{eqnarray*}
Similarly to the above proof, we can derive that
\begin{eqnarray*}
I_{3}&\leq& C\gamma_{i,1}^{2}s^{1-\alpha}(t-s)^{\frac{1}{2}}.
\end{eqnarray*}
Proceeding as in $(2.13)$ we get
\begin{eqnarray*}
I_{4}&\leq&\int_{0}^{s}\int_{0}^{u}\frac{|(e^{-\gamma_{i,1}(t-u)}- e^{-\gamma_{i,1}(s-u)})- (e^{-\gamma_{i,1}(t-s_{1})}- e^{-\gamma_{i,1}(s-s_{1})})        |}{(u-s_{1})^{1+\alpha}}\nonumber\\
&=&\int_{0}^{s}\Big{(}(e^{-\gamma_{i,1}(s-u)}- e^{-\gamma_{i,1}(t-u)})\nonumber\\
&&\int_{0}^{u}\frac{1- (e^{-\gamma_{i,1}(s-s_{1})}- e^{-\gamma_{i,1}(t-s_{1})})(e^{-\gamma_{i,1}(s-u)}- e^{-\gamma_{i,1}(t-u)})^{-1} }{ (u-s_{1})^{1+\alpha}}\Big{)}ds_{1}du.
\end{eqnarray*}
Since
\begin{eqnarray*}
1- (\!\!\!\!\!\!\!\!&&\!\!\!\!\!\!\!\!e^{-\gamma_{i,1}(s-s_{1})}- e^{-\gamma_{i}(t-s_{1})})(e^{-\gamma_{i,1}(s-u)}- e^{-\gamma_{i,1}(t-u)})^{-1} \\
&=&1- (e^{-\gamma_{i,1}(s)}- e^{-\gamma_{i,1}(t)})e^{\gamma_{i,1}(s_{1}-u)}(e^{-\gamma_{i,1}s}- e^{-\gamma_{i,1}t})^{-1}\\
&=&1-e^{-\gamma_{i,1}(u-s_{1})},
\end{eqnarray*}
we get the estimates of $I_{4}$ that
\begin{eqnarray*}
I_{4} &\leq & \gamma_{i,1}^{\frac{1}{2}}(t-s)^{\frac{1}{2}}\int_{0}^{s}\int_{0}^{u}
\frac{1-e^{-\gamma_{i,1}(u-s_{1})}}{ (u-s_{1})^{1+\alpha}}ds_{1}du\nonumber\\
&\leq&C \gamma_{i,1}^{\frac{1}{2}}(t-s)^{\frac{1}{2}}\int_{0}^{s}\int_{0}^{u}
 \frac{\gamma_{i,1}^{\frac{1}{2}}(u-s_{1})^{\frac{1}{2}}}{(u-s_{1})^{1+\alpha}}du\nonumber\\
&\leq&C\gamma_{i,1}s^{\frac{3}{2}-\alpha}(t-s)^{\frac{1}{2}}.
\end{eqnarray*}
By $(2.14)$ and estimates of $I_{1}-I_{4}$ we obtain that
\begin{eqnarray*}
\|Z_{1}(t)-Z_{1}(s)\|_{3}\leq C \sum_{i=1}^{\infty}\lambda_{i,1}^{\frac{1}{2}}\gamma_{i,1}^{\frac{5}{2}}
C_{\alpha}(B^{H}_{i})
(t-s)^{\frac{1}{2}},
\end{eqnarray*}
which complete the proof.
\hspace{\fill}$\square$
\end{proof}
Following the same steps, we can also have the result below.
\begin{proposition}Assume $\tau>0$ and
\begin{eqnarray}
\sum_{i=1}^{\infty}\lambda_{i,2}^{\frac{1}{2}}\gamma_{i,2}^{\frac{5}{2}}<\infty,
\end{eqnarray}
then $(Z_{2}(t))_{t\in[0,\tau]}$ exists as a generalized Stieltjes integral in the sense of $\cite{Z}$ and we have
\begin{eqnarray*}
(Z_{2}(t))_{t\in[0, \tau]}\in C([0,\tau];H^{3}(\mho))\ \ a.s.,
\end{eqnarray*}
where $ C([0,\tau]; H^{3}(\mho))$ is the set of continuous functions defined on $[0,\tau]$ with values in $H^{3}(\mho). $
\end{proposition}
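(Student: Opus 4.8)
The plan is to mirror, line by line, the argument already carried out for Proposition 2.1, replacing throughout the velocity data by the temperature data: the operator $A_1$, its eigenpairs $(e_{i,1},\gamma_{i,1})$, the semigroup $S_1$, the coefficients $\lambda_{i,1}$ and the target space $(H^3(\mho))^2$ are replaced by their temperature counterparts $A_2$, $(e_{i,2},\gamma_{i,2})$, $S_2$, $\lambda_{i,2}$ and $H^3(\mho)$. The structural fact that legitimizes this transfer is that $A_2$, like $A_1$, is self-adjoint with compact inverse on $H_2=L^2(\mho)$, so by the spectral theorem it admits an orthonormal basis of eigenfunctions $(e_{i,2})_i$ with eigenvalues $0<\gamma_{1,2}\leq\gamma_{2,2}\leq\cdots$, and the analytic semigroup acts diagonally as $S_2(t-s)e_{i,2}=e^{-\gamma_{i,2}(t-s)}e_{i,2}$. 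The Robin-type boundary condition $(\partial_z T+\alpha T)|_{\Gamma_u}=0$ that distinguishes $\mathcal{V}_2$ from $\mathcal{V}_1$ enters only through the definition of $A_2$ and does not disturb the diagonalization on which the estimates rely.

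First I would bound $\|Z_2(t)\|_3$ by expanding $Z_2(t)=\sum_i\sqrt{\lambda_{i,2}}\int_0^t S_2(t-s)e_{i,2}\,dB_i^H(s)$ and applying the pathwise integral estimate $(2.7)$ to each summand with $f(s)=A_2^{3/2}S_2(t-s)e_{i,2}$ and $g=B_i^H$. Using the eigenfunction property, $|A_2^{3/2}S_2(t-s)e_{i,2}|_2$ reduces to $\gamma_{i,2}^{3/2}e^{-\gamma_{i,2}(t-s)}$, and the $W^{\alpha,1}$-seminorm splits into a singular term $\int_0^t s^{-\alpha}e^{-\gamma_{i,2}(t-s)}\,ds$ and a difference term. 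The difference term is controlled exactly as in the derivation of $(2.13)$, via the elementary inequality $|e^{-\gamma_{i,2}(t-s)}-e^{-\gamma_{i,2}(t-u)}|\leq\gamma_{i,2}^{1/2}(s-u)^{1/2}$, producing a net gain of one extra factor $\gamma_{i,2}^{1/2}$. This yields $\|Z_2(t)\|_3\leq C\sum_i\lambda_{i,2}^{1/2}\gamma_{i,2}^2 C_\alpha(B_i^H)(t^{1-\alpha}+t^{3/2-\alpha})$, which is finite almost surely: taking expectations and using that the $B_i^H$ are identically distributed turns the series into $C\sum_i\lambda_{i,2}^{1/2}\gamma_{i,2}^{5/2}<\infty$ by hypothesis $(2.16)$.

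For the time-continuity I would split $Z_2(t)-Z_2(s)$ as in $(2.14)$ into the near-diagonal piece $I_1=\|\int_s^t S_2(t-u)\,dW_2^H(u)\|_3$ and the tail piece $I_2=\|\int_0^s(S_2(t-u)-S_2(s-u))\,dW_2^H(u)\|_3$. The bound on $I_1$ follows as for $\|Z_2(t)\|_3$ but over the shorter interval, producing a factor of order $(t-s)^{1/2}$; for $I_2$ I would decompose the seminorm as $I_3+I_4$ exactly as in Proposition 2.1, exploit the factorization $(S_2(t-u)-S_2(s-u))e_{i,2}=(e^{-\gamma_{i,2}(t-u)}-e^{-\gamma_{i,2}(s-u)})e_{i,2}$ together with the algebraic identity collapsing the double-difference quotient to $1-e^{-\gamma_{i,2}(u-s_1)}$, and again apply $1-e^{-x}\leq x^{1/2}$ to extract the extra half-power of $\gamma_{i,2}$. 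Assembling $I_1$ through $I_4$ gives $\|Z_2(t)-Z_2(s)\|_3\leq C\sum_i\lambda_{i,2}^{1/2}\gamma_{i,2}^{5/2}C_\alpha(B_i^H)(t-s)^{1/2}$, finite almost surely under $(2.16)$, which establishes $\frac{1}{2}$-H\"older continuity in time and hence membership in $C([0,\tau];H^3(\mho))$.

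Since the argument is a faithful replica of Proposition 2.1, there is no genuine analytic obstacle here; the only point to verify with care is that the spectral apparatus and the semigroup bounds are insensitive to the change of boundary condition, which they are. The remaining burden is purely bookkeeping: confirming that every power of $\gamma_{i,1}$ appearing in the first proof is matched by the corresponding power of $\gamma_{i,2}$, so that the final summability requirement is \emph{precisely} $(2.16)$ and not some stronger condition.
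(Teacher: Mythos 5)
Your proposal is correct and coincides with the paper's own treatment: the paper proves Proposition 2.2 by simply invoking the same argument as Proposition 2.1, and your line-by-line transfer of that proof (same use of the estimate (2.7), the same $I_1$--$I_4$ decomposition, the same exponent bookkeeping leading to the summability condition $\sum_i\lambda_{i,2}^{1/2}\gamma_{i,2}^{5/2}<\infty$) is exactly what is intended. Your added remark that the Robin boundary condition only changes the definition of $A_2$ but not the diagonal action of the semigroup is the one point the paper leaves implicit, and you handle it correctly.
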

Before proving that the mild solution $(2.11)$ is equivalent to a distribution solution of $(2.10),$ we introduce some notations and definitions from $\cite{MYS}.$
\begin{definition}
 The nonrandom function $ f:\mathbb{R}\rightarrow \mathbb{R}$ is called piecewise
H$\ddot{o}$der of order $\alpha$ on the interval $[T_{1},T_{2}]\subset \mathbb{R}\ (f \in C^{\alpha}_{pw}
[T_{ 1} ,T_{ 2} ])$, if there exists
a finite set of disjoint subintervals $\{[a_{i} ,b_{i} ),1 \leq i \leq N |
\bigcup_{i=1}^{N}[a_{ i} ,b_{ i} ) \cup T_{ 2}=[T_{ 1} ,T_{ 2} ]\}$ and the function $f \in C^{\alpha}[a_{ i} ,b_{ i} )$ for $ 1 \leq i \leq N.$
As before, we denote
\begin{eqnarray*}
\|f\|_{C^{\alpha}[a_{ i} ,b_{ i} )}:=\sup\limits_{a_{i}\leq t< b_{i}}|f(t)|+\sup\limits_{a_{i}\leq s<t< b_{i}}\frac{|f(t)-f(s)|}{|t-s|^{\alpha}}.
\end{eqnarray*}
\end{definition}
For $f\in C^{\alpha}[T_{1} ,T_{2}],$ let
\begin{eqnarray*}
\|f\|_{C^{\alpha}_{pw}[T_{1} ,T_{2} ]}:=\max\limits_{1\leq i\leq N}\|f\|_{C^{\alpha}[a_{ i} ,b_{ i} )}.
\end{eqnarray*}
To prove the equivalence of mild solution and distribution solution of linear stochastic system,  we need stochastic Fubini theorem with respect to fractional Brownian motion.  For convenience, we cite the theorem here, of which proof can be found in $\cite{MYS}.$
\begin{theorem}
Denote by $(B^{H}(u))_{u\in [T_{1},T_{2}]} $ the scalar fBm with Hurst index $H\in (0,1)$ and $T_{1}<T_{2}, T_{1}, T_{2}\in \mathbb{R}.$  Let $\Omega' \subset \Omega$ such that $ P(\Omega') = 1$ and assume
for any $\omega \in \Omega'$ the function $\Phi(t, u, \omega) $ satisfy the conditions:
\par
1) $\forall t \in (T_{1}, T_{2}), \Phi(t, u, \omega)$ is piecewise H$\ddot{o}$lder of order $\beta > 1 - H$ in $u \in
[T_1, T_2]$, and there exists $C = C(\omega) > 0$ such that $\|\Phi(t, ¡¤, \omega)\|_{C^{\beta}
_{pw}[T_1,T_2]}\leq C;$
\par
2) the function $ \int_{T_{1}}^{T_{2}} \Phi(t, u, \omega)dB^{ H}(u) $ is Riemann integrable in the interval
$[T_1, T_2]$.
\par
Then there exist the repeated integrals
\begin{eqnarray*}
I_1 := \int_{T_{1}}^{ T_{2}} (\int_{T_{1}}^{ T_{2}}\Phi(t, u, \omega)dB^{H}(u)) dt\ \mathrm{and}\
I_2 := \int_{T_{1}}^{ T_{2}} (\int_{T_{1}}^{ T_{2}}\Phi(t, u, \omega)dt)dB^{H}(u),
\end{eqnarray*}
and $I_1 = I_2, \mathbb{P}$ -a.s..
\end{theorem}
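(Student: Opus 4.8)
The plan is to prove the identity $I_{1}=I_{2}$ separately for each fixed $\omega\in\Omega'$, reducing the interchange of the pathwise Stieltjes integral in $u$ with the Riemann integral in $t$ to two \emph{deterministic} Fubini interchanges by means of the fractional-derivative representation $(2.6)$. Throughout I fix a parameter $\alpha$ with $1-H<\alpha<\min\{\beta,\tfrac12\}$, which is possible precisely because $\beta>1-H$. This choice serves two purposes: it guarantees $C_{\alpha}(B^{H})<\infty$ $\mathbb{P}$-a.s. (so that all generalized Stieltjes integrals below exist), and it makes every function that is piecewise $\beta$-H\"older in $u$ belong to $W^{\alpha,1}([T_{1},T_{2}];\mathbb{R})$, since $\frac{|f(s)-f(r)|}{(s-r)^{\alpha+1}}\le\|f\|_{C^{\beta}}(s-r)^{\beta-\alpha-1}$ is integrable when $\beta>\alpha$, while the finitely many jumps permitted by $C^{\beta}_{pw}$ produce only an integrable singularity since $\alpha<1$. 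A first consequence is that $g(u):=\int_{T_{1}}^{T_{2}}\Phi(t,u,\omega)\,dt$ lies in $W^{\alpha,1}$: by the triangle/Minkowski integral inequality $|g|_{\alpha,1}\le\int_{T_{1}}^{T_{2}}|\Phi(t,\cdot,\omega)|_{\alpha,1}\,dt\le C\int_{T_{1}}^{T_{2}}\|\Phi(t,\cdot,\omega)\|_{C^{\beta}_{pw}}\,dt\le C\,C(\omega)(T_{2}-T_{1})<\infty$, so that $I_{2}=\int_{T_{1}}^{T_{2}}g\,dB^{H}$ is a well-defined Stieltjes integral. (Intuitively, the same identity is what one obtains by writing the outer Riemann integral of $I_{1}$ as a limit of Riemann sums $\sum_{k}\psi(t_{k})\Delta t_{k}$ with $\psi(t):=\int_{T_{1}}^{T_{2}}\Phi(t,u,\omega)\,dB^{H}(u)$, pulling the finite sum inside $\int\cdot\,dB^{H}$ by linearity, and letting the mesh tend to $0$; the representation below is the rigorous incarnation of this.)

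Using $(2.6)$ on $[T_{1},T_{2}]$ (with the shifted Weyl operators $D^{\alpha}_{T_{1}+}$ and $D^{1-\alpha}_{T_{2}-}$ replacing $D^{\alpha}_{0+}$, $D^{1-\alpha}_{T-}$), I would write
\[
I_{2}=(-1)^{\alpha}\int_{T_{1}}^{T_{2}}D^{\alpha}_{T_{1}+}g(u)\,D^{1-\alpha}_{T_{2}-}(B^{H})_{T_{2}-}(u)\,du .
\]
The first step is to move the $t$-integration through the Weyl derivative, i.e.\ to establish $D^{\alpha}_{T_{1}+}g(u)=\int_{T_{1}}^{T_{2}}D^{\alpha}_{T_{1}+}\Phi(t,\cdot,\omega)(u)\,dt$ for a.e.\ $u$; this is a Fubini interchange of $\int dt$ with the singular integral $\alpha\int_{T_{1}}^{u}\frac{(\cdot)(u)-(\cdot)(r)}{(u-r)^{\alpha+1}}\,dr$ defining $D^{\alpha}_{T_{1}+}$. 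The second step is the classical Fubini swap of the (now) iterated integrals in $t$ and $u$,
\[
I_{2}=\int_{T_{1}}^{T_{2}}\Big((-1)^{\alpha}\int_{T_{1}}^{T_{2}}D^{\alpha}_{T_{1}+}\Phi(t,\cdot,\omega)(u)\,D^{1-\alpha}_{T_{2}-}(B^{H})_{T_{2}-}(u)\,du\Big)\,dt .
\]
By $(2.6)$ again, the inner $u$-integral is exactly $\int_{T_{1}}^{T_{2}}\Phi(t,\cdot,\omega)\,dB^{H}=\psi(t)$, so that $I_{2}=\int_{T_{1}}^{T_{2}}\psi(t)\,dt$; condition 2) guarantees that this Lebesgue integral coincides with the Riemann integral $I_{1}$, giving $I_{1}=I_{2}$.

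The step I expect to be the main obstacle is the justification of these interchanges, which is where condition 1) and the finiteness of $C_{\alpha}(B^{H})$ enter. For the singular Fubini in the first step one must dominate $\int_{T_{1}}^{T_{2}}\int_{T_{1}}^{u}\frac{|\Phi(t,u,\omega)-\Phi(t,r,\omega)|}{(u-r)^{\alpha+1}}\,dr\,dt$ uniformly; condition 1) supplies the bound $|\Phi(t,u,\omega)-\Phi(t,r,\omega)|\le C(\omega)|u-r|^{\beta}$ on each H\"older piece, and since $\beta>\alpha$ the resulting $(u-r)^{\beta-\alpha-1}$ is integrable, so Fubini--Tonelli applies (the finitely many jump points form a $du$-null set and are harmless). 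For the second swap one checks the absolute integrability $\int_{T_{1}}^{T_{2}}\int_{T_{1}}^{T_{2}}|D^{\alpha}_{T_{1}+}\Phi(t,\cdot,\omega)(u)|\,|D^{1-\alpha}_{T_{2}-}(B^{H})_{T_{2}-}(u)|\,du\,dt\le C_{\alpha}(B^{H})\int_{T_{1}}^{T_{2}}|\Phi(t,\cdot,\omega)|_{\alpha,1}\,dt\le C\,C(\omega)(T_{2}-T_{1})\,C_{\alpha}(B^{H})<\infty$, using $(2.7)$ together with condition 1) and $C_{\alpha}(B^{H})<\infty$. A preliminary technical point, needed before any of this, is the joint measurability of $(t,u)\mapsto\Phi(t,u,\omega)$, which follows from its continuity in $u$ (condition 1)) and its measurability in $t$ via a Carath\'eodory argument. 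Carrying out the two interchanges on each H\"older subinterval $[a_{i},b_{i})$ and summing the finitely many contributions completes the proof that $I_{1}=I_{2}$, $\mathbb{P}$-a.s.
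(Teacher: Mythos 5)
You cannot be compared against an internal proof here: the paper does not prove this statement at all. It is Theorem 2.1 of the paper, introduced with the remark that it is a stochastic Fubini theorem for fBm ``of which proof can be found in [MYS]'' (Mishura's book), so the paper simply cites it. Judged on its own, your argument is a reasonable self-contained proof \emph{within the paper's framework}: you define everything through the Z\"ahle representation $(2.6)$ on $[T_{1},T_{2}]$, show $g(u)=\int_{T_{1}}^{T_{2}}\Phi(t,u,\omega)\,dt\in W^{\alpha,1}$ by Minkowski so that $I_{2}$ exists, push the $t$-integral through the Weyl derivative by Fubini--Tonelli (the within-piece bound $C(\omega)(u-r)^{\beta}$ with $\beta>\alpha$ and the integrable $(u-a_{i})^{-\alpha}$ singularities at the finitely many jump points make the double integral finite), swap the resulting iterated integrals using $(2.7)$ and $C_{\alpha}(B^{H})<\infty$, recognize the inner integral as $\psi(t)=\int_{T_{1}}^{T_{2}}\Phi(t,\cdot,\omega)\,dB^{H}$, and finally use condition 2) plus the uniform bound $|\psi(t)|\leq C_{\alpha}(B^{H})\,C'(\omega)$ to identify the Lebesgue integral of $\psi$ with the Riemann integral $I_{1}$. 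That is the natural ``deterministic double-Fubini through fractional calculus'' route, and it buys a proof that never leaves the paper's own objects $(2.6)$--$(2.7)$, at the cost of proving less than the cited statement (see below), whereas the citation to [MYS] covers the theorem as stated for all $H\in(0,1)$.

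Two caveats you should make explicit. First, your opening choice $1-H<\alpha<\min\{\beta,\tfrac12\}$ is \emph{not} possible ``precisely because $\beta>1-H$'': for $H\leq\tfrac12$ the interval is empty, while the theorem is stated for $H\in(0,1)$. The cap $\alpha<\tfrac12$ is inherited from the paper's standing convention $\alpha\in(1-H,\tfrac12)$ (which silently presumes $H>\tfrac12$); if you want your argument to match the generality of the statement you should either drop the $\tfrac12$ and take $\alpha\in(1-H,\beta)$, checking that the representation $(2.6)$ and the bound $(2.7)$ remain valid for such $\alpha$, or state clearly that you prove the case actually used in the paper. Second, measurability of $t\mapsto\Phi(t,u,\omega)$ is not among the hypotheses 1)--2); you flag it and appeal to a Carath\'eodory argument, but as written it is an additional (harmless, but unstated) assumption needed both for your Fubini steps and for the inner $dt$-integral in $I_{2}$ to make sense. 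Neither point breaks the argument in the paper's setting; both should be acknowledged.
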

\begin{proposition}
The mild solution $(2.11)$ to $(2.10)$ is also a distribution solution to $(2.10)$ and vice versa.
\end{proposition}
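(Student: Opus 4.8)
I would begin by fixing the notion of \emph{distribution solution}: a $\mathcal{V}_j$-valued process $(Y_j(t))_{t\in[0,\tau]}$ with paths $\mathbb{P}$-a.s. in $W^{\alpha,1}([0,\tau];\mathcal{V}_j)$ is a distribution solution of $(2.10)$ if $Y_j(0)=0$ and, for every $\psi\in D(A_j)$ and every $t\in[0,\tau]$,
\[
\langle Y_j(t),\psi\rangle=-\int_0^t\langle Y_j(s),A_j\psi\rangle\,ds+\langle W^H_j(t),\psi\rangle,\qquad \mathbb{P}\text{-a.s.}
\]
The whole argument rests on the stochastic Fubini theorem (Theorem $2.1$), which is exactly what permits the interchange of the pathwise generalized Stieltjes integral against each $B^H_i$ with the Lebesgue integral in time; self-adjointness of $A_j$ and $S_j$ and the eigenbasis $(e_{i,j})$ reduce every infinite-dimensional object to a summable family of scalar integrals.

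To prove that the mild solution $(2.11)$ is a distribution solution, I would insert $Z_j(s)=\int_0^s S_j(s-r)\,dW^H_j(r)$ into the drift term and expand $W^H_j$ via its series $(2.5)$, so that
\[
\langle Z_j(s),A_j\psi\rangle=\sum_{i=1}^\infty \lambda_{i,j}^{\frac12}\,\langle e_{i,j},\psi\rangle\int_0^s \gamma_{i,j}\,e^{-\gamma_{i,j}(s-r)}\,dB^H_i(r).
\]
Applying Theorem $2.1$ term by term to swap $\int_0^t(\cdots)\,ds$ with $\int_0^s(\cdots)\,dB^H_i(r)$, and evaluating the resulting deterministic inner integral $\int_r^t \gamma_{i,j}e^{-\gamma_{i,j}(s-r)}\,ds=1-e^{-\gamma_{i,j}(t-r)}$ (equivalently $\int_r^t A_jS_j(s-r)\,ds=I-S_j(t-r)$ at the operator level), the ``$1$'' part reassembles $\langle W^H_j(t),\psi\rangle$ through $(2.5)$ and the exponential part reassembles $\langle Z_j(t),\psi\rangle$, giving
\[
\int_0^t\langle Z_j(s),A_j\psi\rangle\,ds=\langle W^H_j(t),\psi\rangle-\langle Z_j(t),\psi\rangle,
\]
which is precisely the required identity.

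Before invoking Fubini I must check its hypotheses for the integrand $\Phi(s,r,\omega)=\gamma_{i,j}e^{-\gamma_{i,j}(s-r)}\langle e_{i,j},\psi\rangle\,\mathbf{1}_{(0,s)}(r)$, viewed as a function of $r\in[0,t]$ with outer Riemann variable $s$. The cut-off $\mathbf{1}_{(0,s)}$ introduces a single breakpoint at $r=s$, which is exactly why the \emph{piecewise} Hölder class of Theorem $2.1$ is the right setting; on each subinterval the exponential kernel is smooth, hence Hölder of any order $\beta>1-H$, and $s\mapsto\int_0^t\Phi(s,r,\omega)\,dB^H_i(r)$ is Riemann integrable by the continuity estimates already established in Propositions $2.1$--$2.2$. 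The delicate point, and where I expect the real work to concentrate, is producing a Hölder bound on each subinterval that is uniform in the outer variable $s$ and then summing over $i$; the summability conditions $(2.12)$/$(2.15)$ are what control the $\gamma_{i,j}$-dependent constants and guarantee the $\mathbb{P}$-a.s. convergence in $\mathcal{V}_j$ of the interchanged series.

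For the converse, I would take an arbitrary distribution solution $Y_j$ and, with $t$ fixed, test the weak formulation against the time-dependent function $s\mapsto S_j(t-s)\phi$ for $\phi\in D(A_j)$. Computing $\tfrac{d}{ds}\langle Y_j(s),S_j(t-s)\phi\rangle$ by a product rule, the drift contribution $-\langle Y_j(s),A_jS_j(t-s)\phi\rangle\,ds$ cancels against $\langle Y_j(s),\partial_s S_j(t-s)\phi\rangle\,ds=\langle Y_j(s),A_jS_j(t-s)\phi\rangle\,ds$, leaving $\tfrac{d}{ds}\langle Y_j(s),S_j(t-s)\phi\rangle=\langle S_j(t-s)\phi,dW^H_j(s)\rangle$; integrating over $[0,t]$ and using $S_j(0)=I$, $Y_j(0)=0$ yields $\langle Y_j(t),\phi\rangle=\langle Z_j(t),\phi\rangle$, and density of $D(A_j)$ in $H_j$ gives $Y_j=Z_j$. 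The product/chain rule used here is not automatic for the generalized Stieltjes integral and must be justified by the same Fubini-type argument, or by replacing $S_j$ with its Yosida approximation $S_j^{(n)}=e^{-A_jJ_n(\cdot)}$, carrying out the smooth computation, and passing to the limit using the regularity afforded by $(2.12)$/$(2.15)$.
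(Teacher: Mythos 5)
Your proposal is correct and follows essentially the same route as the paper: the forward direction by expanding $Z_j$ in the eigenbasis, checking the piecewise H\"older hypotheses for the indicator-cut exponential kernel and the Riemann integrability of the inner integral, and then applying the stochastic Fubini theorem (Theorem 2.1) to evaluate $\int_u^t A_jS_j(s-u)\,ds=I-S_j(t-u)$ and reassemble $\langle Z_j(t),\xi\rangle$ and $\langle W^H_j(t),\xi\rangle$. For the converse the paper simply invokes the standard Da Prato--Zabczyk argument (after noting via Theorem 4.2.1 of Z\"ahle that the integral is a Riemann--Stieltjes one), which is exactly the testing against $s\mapsto S_j(t-s)\phi$ with the approximation/justification step you describe.
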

\begin{proof}
Let $\xi \in D(A_{j}^{*}).$ For $t\in [0,T],$ using $(2.13)$ and Proposition $2.2.$ of $\cite{Z},$ we have
\begin{eqnarray}
\int_{0}^{t}\langle Z_{j}(s), A_{j}^{*}\xi\rangle ds&=& \int_{0}^{t}\langle\int_{0}^{s} S_{j}(s-u)dW^{H}_{j}(u),  A_{j}^{*}\xi\rangle ds\nonumber\\
&=&\sum^{\infty}_{i=1}\lambda_{i,j}^{\frac{1}{2}}\int_{0}^{t} \int_{0}^{s}\langle S_{j}(s-u)e_{i,j},  A_{j}^{*}\xi\rangle dB^{H}_{i}(u)ds\nonumber\\
&=&\sum^{\infty}_{i=1}\lambda_{i,j}^{\frac{1}{2}}\int_{0}^{t}\int_{0}^{t}1_{[0, s]}(u) \langle S_{j}(s-u)e_{i,j},   A_{j}^{*}\xi\rangle dB^{H}_{i}(u)ds,
\end{eqnarray}
where $j=1,2.$ Now we define the the function $ f_{j}: [0,t]\times [0,t]\rightarrow \mathbb{R}$ by
\begin{eqnarray*}
f_{j}(s,u):=1_{[0, s]}(u) \langle S_{j}(s-u)e_{i,j},   A_{j}^{*}\xi\rangle .
\end{eqnarray*}
It is easy to check that for $\forall s\in[0,t],f_{j}(s,\cdot) $ is piecewise H$\ddot{o}$lder of order $1$ in $u\in [0,t]$ (see Definition 2.1 ), and there exists $C=C(\omega)>0$ such that $\|f_{j}(s,\cdot)\|_{C^{1}_{pw}[0,t]}\leq C.$ Indeed, for $s\in [0,t],$ in order to check the regularity of $f_{j}(s,\cdot)$  we only need to consider $u\leq s.$ For $u_{1},u_{2}\in [0,s],$ we have
\begin{eqnarray*}
|f_{j}(s,u_{1})-f_{j}(s,u_{2})|&\leq &|\langle (S_{j}(s-u_{1})-S_{j}(s-u_{2}))e_{i,j},  A_{j}^{*}\xi\rangle|\\
&\leq&|A_{j}^{*}\xi |_{2}|e^{-(s-u_{1})\gamma_{i,j}}-e^{-(s-u_{2})\gamma_{i,j}}|\\
&\leq& C\gamma_{i,j}|u_{1}-u_{2}|.
\end{eqnarray*}
Since  the integrand in $Z_{j}$ is infinitely differentiable with respect to time, by Theorem 4.2.1 in $\cite{Z}$ we get that the stochastic calculus agrees with the
Riemann--Stieltjes integral. Using $(2.13)$ again, we know $Z_{j}$ is Riemann integrable.
Therefore, applying Theorem $2.1$ to $(2.16)$, we obtain
\begin{eqnarray*}
\int_{0}^{t}\langle Z_{j}(s), A_{j}^{*}\xi\rangle ds&=&\sum^{\infty}_{i=1}\lambda_{i,j}^{\frac{1}{2}}\int_{0}^{t}\int_{0}^{t}1_{[0, s]}(u) \langle S_{j}(s-u)e_{i,j},   A_{j}^{*}\xi\rangle ds dB^{H}_{i}(u)\\
&=&\sum^{\infty}_{i=1}\lambda_{i,j}^{\frac{1}{2}}\int_{0}^{t}\int_{u}^{t} \langle A_{j}S_{j}(s-u)e_{i,j}, \xi\rangle ds dB^{H}_{i}(u)\\
&=&\sum^{\infty}_{i=1}\lambda_{i,j}^{\frac{1}{2}}\int_{0}^{t} \langle (S_{j}(t-u)-I)e_{i,j}, \xi\rangle dB^{H}_{i}(u)\\
&=&\langle Z_{j}(t), \xi\rangle- \langle W^{H}_{j}(t), \xi\rangle
\end{eqnarray*}
which implies that mild solution $(2.11)$ to $(2.10)$ is the distribution solution to $(2.10)$. Conversely,
by Theorem 4.2.1 in $\cite{Z}$ we get that the stochastic calculus $Z_{j}$ agrees with the
Riemann--Stieltjes integral.  Then following the steps in $\cite{PZ}$ we can show that a distribution solution to $(2.10)$ is  also a mild solution to $(2.10)$.
\hspace{\fill}$\square$
\end{proof}
\par
To simplify the notations, we denote
\begin{eqnarray*}
w(x,y,z,t)=-\int^{z}_{-1}\nabla \cdot v(x,y,\lambda,t)d\lambda:=\varphi(v)(x,y,z,t)
\end{eqnarray*}
and set
\begin{eqnarray*}
\int_{\mho}\cdot d\mho:=\int_{\mho}\cdot\ ,\ \ \ \ \int_{M}\cdot dM:=\int_{M}\cdot\ .
\end{eqnarray*}
\begin{definition}
Given $\mathcal{T}>0,$ we say a continuous $\mathcal{V}$-valued $(\mathcal{F}_{t})=(\sigma(W^{H}_{j}(s),s\in [0,t]), j=1,2)$ adapted random field $(U(.,t))_{t\in [0,T]}=(\upsilon(.,t),T(.,t))_{t\in [0,\mathcal{T}]}$ defined on $(\Omega, \mathcal{F}, \mathbb{P})$ is a strong solution (weak solution) to problem $(1.1)-(1.5)$ if the following two conditions hold:\\
(1) We have $U\in L^{2}([0,\mathcal{T}];(H^{2}(\mho))^{3})\cap C([0,\mathcal{T}];\mathcal{V})\ ( U\in L^{2}([0,\mathcal{T}];(H^{1}(\mho))^{3})\cap C([0,\mathcal{T}];\mathcal{H}))$ a.s..\\
(2)The integral relation
\begin{eqnarray*}
&&\int_{\mho}\!\upsilon(t)\!\cdot  \phi_{1} -\int^{t}_{0}dt\int_{\mho}\{[(\upsilon\cdot \nabla )\phi_{1} +\varphi(\upsilon)\partial_{z}\phi_{1} ]\upsilon-[(fk\times \upsilon)\cdot \phi_{1}+(\int_{-1}^{z}Tdz')\nabla \cdot \phi_{1}] \}\nonumber\\
&&+\int_{0}^{t}\int_{\mho}\upsilon\cdot L_{1}\phi_{1} =\int_{\mho}\upsilon_{0}\cdot \phi_{1}+\int_{\mho}W^{H}_{1}(t,w)\cdot \phi_{1},\nonumber\\
&&\int_{\mho}T(t)\phi_{2}-\int_{0}^{t}\int_{\mho}\{[(\upsilon\cdot \nabla)\phi_{2}+ \varphi(\upsilon)\partial_{z}\phi_{2}]T-TL_{2}\phi_{2} \}=\int_{\mho}T_{0}\phi_{2}\nonumber\\
&&+\int_{0}^{t}\int_{\mho}Q\phi_{2}+\int_{\mho}W^{H}_{2}(t,w)\cdot \phi_{2},
\end{eqnarray*}
hold a.s. for all $t\in [0,\mathcal{T}]$ and $\phi=(\phi_{1},\phi_{2})\in D(A_{1})\times D(A_{2}).$
\end{definition}
Let $u(t)=\upsilon(t)-Z_{1}(t)$ and $\theta(t)= T(t)-Z_{2}(t), t\in \mathbb{R}^{+}.$ A stochastic process $U(t,w)=(\upsilon,T)$ is a strong solution to $(1.1)-(1.5)$ on $[0,\mathcal{T}]$, if and only if $(u,\theta)$ is a strong solution to the following problem on $[0,\mathcal{T}]:$
\begin{eqnarray}
&&\partial_{t} u-\Delta u-\partial_{zz} u+[(u+{Z}_{1})\cdot\nabla ](u+{Z}_{1})+\varphi(u+{Z}_{1}) \partial_{z}(u+{Z}_{1})\nonumber\\
&& +f( u+{Z}_{1})^{\bot}+\nabla p_{s}-\int_{-1}^{z}\nabla T dz'=0;\\
&&\partial_{t}\theta -\Delta \theta-\partial_{zz}\theta +[(u+{Z}_{1})\cdot\nabla ](\theta+Z_{2})+\varphi(u+{Z}_{1}) \partial_{z} (\theta+Z_{2})=Q;\\
&&\int^{0}_{-1}\nabla\cdot udz=0;\\
&&\partial_{z} u|_{\Gamma_{u}}=\partial_{z}u|_{\Gamma_{b}}=0;
u\cdot \vec{n}|_{\Gamma_{s}}=0, \partial_{\vec{n}}u\times \vec{n}|_{\Gamma_{s}}=0;\\
&&\Big{(}\partial_{z}\theta+\alpha \theta\Big{)}|_{\Gamma_{u}}=\partial_{z}\theta|_{\Gamma_{b}}=0, \ \ \partial_{\vec{n}}\theta|_{\Gamma_{s}}=0;\\
&&(u\Big{|}_{t=0}, \theta\Big{|}_{t=0})=(0, T_{0}).
\end{eqnarray}

\begin{definition}
Let $Z_{j},j=1,2,$ are defined above, $v_{0}\in \mathcal{V}_{1}, T_{0}\in \mathcal{V}_{2}$ and $\mathcal{T}$ be a fixed positive time. For $P-a.e. \omega\in \Omega, (u,\theta)$ is called a strong solution of the system $(2.17)-(2.22)$ on the time interval $[0,\mathcal{T}]$ if it satisfies $(2.17)-(2.18)$ in the weak sense such that
\begin{eqnarray*}
&&u\in C([0,\mathcal{T}];\mathcal{V}_{1})\cap L^{2}([0,\mathcal{T}]; (H^{2}(\mho))^{2}),\\
&&\theta\in C([0,\mathcal{T}];\mathcal{V}_{2})\cap L^{2}([0,\mathcal{T}]; H^{2}(\mho)).
\end{eqnarray*}
\end{definition}
Let $(X, d)$ be a polish space and $(\tilde{\Omega}, \tilde{\mathcal{F}}, \tilde{\mathbb{P}}  )$ be a probability space, where $ \tilde{\Omega}$ is the two -sided Wiener space $C_{0}(\mathbb{R}; X )$ of continuous functions with values in $X$, equal to $0$ at $t=0$. We consider a family of mappings
$S(t,s;\omega):X\rightarrow X,\ \ -\infty<s \leq t< \infty,$ parametrized by $\omega\in \tilde{\Omega},$ satisfying for $ \tilde{\mathbb{P}}$-$a.e.\ \omega$ the following properties (i)-(iv):
\par
(i)$\ \ S(t,r;\omega)S(r,s;\omega)x= S(t,s;\omega)x$ for all $s\leq r\leq t$ and $x\in X;$
\par
(ii)$\ \ S(t,s;\omega)$ is continuous in $X,$ for all $s\leq t;$
\par
(iii)\ \  for all $s<t$ and $x\in X$, the mapping
\[
\omega\mapsto S(t,s;\omega)x
\]
\ \ \ \ \ \ \ \ \ \ \ \ is measurable from $(\tilde{\Omega},\tilde{\mathcal{F}})$ to $(X, \mathcal{B}(X )  )$ where $\mathcal{B}(X ) $ is the Borel-$\sigma$- algebra of $ X$;
\par
(iv)\ \ for all $t, x\in X,$ the mapping $s\mapsto S(t,s;\omega)$ is right continuous at any point.
\par
A set valued map $K: \tilde{\Omega}\rightarrow 2^{X}$ taking values in the closed subsets of $X$ is said to be measurable if for each $x\in X$ the map
$\omega\mapsto d(x, K(\omega))$ is measurable, where $d(A,B)=\sup\{\inf\{d(x,y):y\in B \}:x\in A \}$ for $A,B \in 2^{X}, A,B\neq \emptyset;$
and $d(x,B)=d(\{x\},B).$ Since $ d(A,B)=0$ if and only if $A\subset B, d$ is not a metric. A closed set valued measurable map $K:\tilde{\Omega}\rightarrow 2^{X}$ is named a random closed set.
\par
Given $t\in \mathbb{R}$ and $\omega\in \tilde{\Omega}, K(t,\omega)\subset X$ is called an attracting set at time $t$ if , for all bounded sets $B\subset X,$
\[
d(S(t,s;\omega)B, K(t,\omega) )\rightarrow 0,\ \ provided\ s\rightarrow -\infty.
\]
Moreover, if for all bounded sets $B\subset X,$ there exists $t_{B}(\omega)$ such that for all $s\leq t_{B}(\omega)$
\[
S(t,s;\omega)B\subset K(t,\omega),
\]
we say $K(t,\omega) $ is an absorbing set at time $t.$

Let $\{\vartheta_{t}:\tilde{\Omega}\rightarrow \tilde{\Omega}   \}, t\in T, T=\mathbb{R},$ be a family of measure preserving transformations of the probability space $(\tilde{\Omega}, \tilde{\mathcal{F}},\tilde{ \mathbb{P}} )$ such that for all $s< t$ and $\omega\in \tilde{\Omega}$
\par
(a) $(t,\omega)\rightarrow \vartheta_{t}\omega$ is measurable;
\par
(b) $\vartheta_{t}(\omega)(s)=\omega(t+s)-\omega(t)$;
\par
(c) $S(t,s;\omega)x=S(t-s,0;\vartheta_{s}\omega)x.$\\
Thus $(\vartheta_{t} )_{t\in T}$ is a flow, and
$((\tilde{\Omega}, \tilde{\mathcal{F}},\tilde{ \mathbb{P}} ), (\vartheta_{t} )_{t\in T} )$ is a measurable dynamical system.

\begin{definition}
Given a bounded s$\!$et $B\subset X$, the s$\!$et
\begin{eqnarray*}
\mathcal{A}(B,t,\omega)=\bigcap\limits_{T\leq t}\overline{\bigcup\limits_{s\leq T}S(t, s,\omega)B}
\end{eqnarray*}
is said to be the $\Omega$-limit set of $B$ at time $t$. Obviously, if we denote $\mathcal{A}(B,0,\omega)=\mathcal{A}(B,\omega),$ we have
$\mathcal{A}(B,t,\omega)=\mathcal{A}(B,\vartheta_{t}\omega).$
\end{definition}
We may identify
\begin{eqnarray*}
\mathcal{A}(B,t,\omega)=\{x\in X &:  &\mathrm{there}\ \mathrm{exists}\ s_{n}\rightarrow -\infty\ \mathrm{and}\ x_{n}\in B\nonumber\\
 &&\mathrm{such}\ \mathrm{that}\ \lim\limits_{n\rightarrow\infty}S(t,s_{n},\omega)x_{n}=x\}.
\end{eqnarray*}
Furthermore, if there exists a compact attracting set $K(t,\omega)$ at time $t,$ it is not difficult to check that $\mathcal{A}(B,t,\omega)$ is a nonempty compact subset of $X$ and $\mathcal{A}(B,t,\omega)\subset K(t,\omega). $
\begin{definition}
If, for all $t\in \mathbb{R}$ and $\omega\in \tilde{\Omega},$ the random closed set $\omega\rightarrow \mathcal{A}(t,\omega)$ satisfying the following properties:
\par
(1) $\mathcal{A}(t,\omega)$ is a nonempty compact subset of $X,$
\par
(2) $\mathcal{A}(t,\omega)$ is the minimal closed attracting set,
i.e., if $\tilde{\mathcal{A}}(t,\omega)$ is another closed attracting set, then $\mathcal{A}(t,\omega)\subset \tilde{\mathcal{A}}(t,\omega),$
\par
(3) it is invariant,  in the sense that, for all $s\leq t,$
 \[
 S (t,s;\omega)\mathcal{A}(s,\omega)=\mathcal{A}(t,\omega),
 \]
$\mathcal{A}(t,\omega)$  is called the random attractor.
\end{definition}
Let
$$\mathcal{A}(\omega)=\mathcal{A}(0,\omega). $$
Then the invariance property writes
$$S(t,s;\omega)\mathcal{A}(\vartheta_{s}\omega)=\mathcal{A }(\vartheta_{t}\omega).$$
We will prove the existence of the random attractor using Theorem 2.2 in $\cite{CDF} $.
For the convenience of reference, we cite it here.
\begin{theorem}
Let $(S(t, s; \omega))_{t\geq s, \omega\in \tilde{\Omega}}$ be a stochastic dynamical system
satisfying $\mathrm{(i)}, \mathrm{(ii)}, \mathrm{(iii)}$ and $\mathrm{(iv)}$. Assume that there exists a group $ \vartheta_{t}, t\in \mathbb{R},$ of measure preserving mappings such that condition $(c)$ holds and that, for $\tilde{\mathbb{P}}$-a.e.\ $\omega,$ there exists a compact attracting set $K(\omega)$ at time $0.$ For $\tilde{\mathbb{P}} $-a.e.\ $\omega,$ we set
$$\mathcal{A}(\omega)=\overline{\bigcup_{B\subset X}\mathcal{A}(B,\omega) } $$
where the union is taken over all the bounded subsets of $X$. Then we have for $\tilde{\mathbb{P}}  $-a.e.\ $\omega\in \tilde{\Omega}.$
\par
(1)\ $\mathcal{A}(\omega)$ is a nonempty compact subset of $X$, and if $X$ is connected,
it is a connected subset of $K(\omega)$.
\par
(2)\ The family $\mathcal{A}(\omega),\ \omega\in \Omega$, is measurable.
\par
(3)\ $\mathcal{A}(\omega)$ is invariant in the sense that
$$S(t,s;\omega)\mathcal{A}(\vartheta_{s}\omega)= \mathcal{A}(\vartheta_{t}\omega),\ \ s\leq t.$$
\par
(4)\ It attracts all bounded sets from $-\infty$: for bounded $B\subset X$ and $\omega\in \tilde{\Omega}$
\begin{eqnarray*}
d(S(t,s;\omega)B, \mathcal{A}(\vartheta_{t}\omega))\rightarrow 0,\ \ when\ s\rightarrow -\infty.
\end{eqnarray*}
Moreover, it is the minimal closed set with this property: if $\tilde{\mathcal{A}}(\vartheta_{t}\omega)$ is a closed attracting set, then $\mathcal{A}(\vartheta_{t}\omega)\subset \tilde{\mathcal{A}}(\vartheta_{t}\omega).$
\par
(5)\ For any bounded set $ B\subset X,\ d(S(t,s;\omega)B, \mathcal{A}(\vartheta_{t}\omega))\rightarrow 0$ in probability when $t\rightarrow \infty.$\\
And if the time shift $\vartheta_{t},t\in \mathbb{R}$ is ergodic
\par
(6)\ there exists a bounded set $B\subset X$ such that
\begin{eqnarray*}
\mathcal{A}(\omega)= \mathcal{A}(B, \omega).
\end{eqnarray*}
\par
(7)\ $ \mathcal{A}(\omega)$ is the largest compact measurable set which is invariant in sense of Definition $2.5.$
\end{theorem}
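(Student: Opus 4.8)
The plan is to follow the classical $\Omega$-limit set construction of Crauel, Debussche and Flandoli, building every assertion out of the omega-limit sets $\mathcal{A}(B,t,\omega)$ of Definition 2.4 and the single hypothesis that a compact attracting set $K(\omega)$ exists at time $0$ (equivalently, via property (c), a compact attracting $K(t,\omega)=K(\vartheta_t\omega)$ at every time $t$). First I would record the basic properties of $\mathcal{A}(B,t,\omega)$ through its sequential characterization as the set of limits of $S(t,s_n;\omega)x_n$ with $s_n\to-\infty$ and $x_n\in B$. Given any such sequence, the attracting property forces $d(S(t,s_n;\omega)x_n,K(t,\omega))\to 0$; since $K(t,\omega)$ is compact, a subsequence converges, so $\mathcal{A}(B,t,\omega)$ is nonempty, and every limit point lies in $K(t,\omega)$, whence $\mathcal{A}(B,t,\omega)\subset K(t,\omega)$ is precompact. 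As an intersection of closed sets it is closed, hence compact, and the identity $\mathcal{A}(B,t,\omega)=\mathcal{A}(B,\vartheta_t\omega)$ noted after Definition 2.4 lets me reduce everything to time $0$.

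Next I would prove invariance and attraction. For invariance I would show $S(t,s;\omega)\mathcal{A}(B,s,\omega)=\mathcal{A}(B,t,\omega)$ for $s\le t$: the inclusion ``$\subset$'' follows from the cocycle law (i) $S(t,s_n)=S(t,s)S(s,s_n)$ together with the continuity (ii) applied along a subsequence converging at the intermediate time $s$, while ``$\supset$'' follows from the same decomposition and the compactness that extracts the intermediate limit; passing to the flow form gives $S(t,s;\omega)\mathcal{A}(\vartheta_s\omega)=\mathcal{A}(\vartheta_t\omega)$. For attraction I would argue by contradiction: if $d(S(t,s;\omega)B,\mathcal{A}(t,\omega))\not\to 0$ as $s\to-\infty$, there are $s_n\to-\infty$ and $x_n\in B$ with $S(t,s_n;\omega)x_n$ bounded away from $\mathcal{A}(t,\omega)$, yet by compactness of $K(t,\omega)$ a subsequence converges to a point which, by definition, lies in $\mathcal{A}(t,\omega)$ --- a contradiction. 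This establishes item (3) and the attraction half of item (4).

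Then I would assemble the full object $\mathcal{A}(\omega)=\overline{\bigcup_B\mathcal{A}(B,\omega)}$. Because each $\mathcal{A}(B,\omega)\subset K(\omega)$, the whole union sits inside the compact set $K(\omega)$, so its closure is compact and nonempty, giving item (1); if $X$ is connected I would in addition take the absorbing sets to be balls, use that continuous images of connected sets are connected, and rule out a nontrivial splitting of $\mathcal{A}(\omega)$ into two positively separated compact pieces. Minimality (the second half of item (4)) is immediate: if $\tilde{\mathcal{A}}$ is a closed attracting set then every sequence $S(t,s_n;\omega)x_n$ defining a point of $\mathcal{A}(B,\omega)$ must have its limit in $\tilde{\mathcal{A}}$, so $\mathcal{A}(B,\omega)\subset\tilde{\mathcal{A}}$ and hence $\mathcal{A}(\omega)\subset\tilde{\mathcal{A}}$. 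For measurability (item (2)) I would use property (iii): writing $d(x,\mathcal{A}(B,\omega))$ as a countable infimum over a dense set of the measurable maps $\omega\mapsto S(0,s;\omega)x'$ along rational times $s$, I obtain that $\omega\mapsto d(x,\mathcal{A}(\omega))$ is measurable, i.e. $\mathcal{A}$ is a random closed set.

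Finally, the convergence in probability (item (5)) I would obtain from a stationarity transfer: by (c), $S(t,0;\omega)=S(0,-t;\vartheta_t\omega)$, so $d(S(t,0;\omega)B,\mathcal{A}(\vartheta_t\omega))$ has the same law as $d(S(0,-t;\omega)B,\mathcal{A}(\omega))$, which tends to $0$ almost surely as $t\to\infty$ by item (4) at time $0$; almost sure convergence gives convergence in probability, and the law-preserving property transports it back. For the ergodic case (item (6)) I would note that $\mathcal{A}(B,\omega)$ increases with $B$ and that the relevant events are $\vartheta$-invariant, so ergodicity forces a zero-one law under which a single deterministic ball already realizes the whole attractor; item (7) then follows by combining invariance, compactness, measurability and minimality. \emph{The main obstacle I expect} is the measurability statement together with taming the \emph{uncountable} union $\bigcup_B\mathcal{A}(B,\omega)$: a priori this union need not be measurable, and the remedy is to exhaust $X$ by a countable increasing family of balls $B_n$ and to check that $\mathcal{A}(\omega)=\overline{\bigcup_n\mathcal{A}(B_n,\omega)}$, reducing to a countable union of sets each shown measurable via property (iii); the stationarity and ergodicity arguments behind items (5)--(6) are the other delicate points.
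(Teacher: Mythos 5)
The paper does not actually prove this statement: it is Theorem 2.2 of Crauel--Debussche--Flandoli $\cite{CDF}$, quoted verbatim ``for the convenience of reference'' and used only as a black box in the proof of Theorem 3.1, so there is no in-paper proof to compare against. Your outline reconstructs essentially the original argument of $\cite{CDF}$: nonemptiness, compactness and the inclusion $\mathcal{A}(B,t,\omega)\subset K(t,\omega)$ from the compact attracting set via the sequential characterization; invariance $S(t,s;\omega)\mathcal{A}(B,s,\omega)=\mathcal{A}(B,t,\omega)$ from the cocycle property (i), continuity (ii) and extraction of an intermediate limit at time $s$; attraction by contradiction and minimality from closedness of any competing attracting set; measurability from (iii)--(iv) after the key reduction of the uncountable union over all bounded $B$ to a countable exhaustion by balls together with rational times and a countable dense subset; convergence in probability in (5) by the identity $S(t,0;\omega)=S(0,-t;\vartheta_t\omega)$ from (c) plus measure preservation, transporting the almost sure convergence of (4); and the ergodic zero--one argument for (6). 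As a sketch this is sound, and you correctly identify the genuinely delicate points (measurability of the big union, the stationarity transfer, the ergodic step). The places where your sketch is thinner than the cited proof are the connectedness assertion in (1) (``rule out a nontrivial splitting'' needs the explicit argument that the attractor is the omega-limit of a connected absorbing ball) and items (6)--(7), where the zero--one law and the maximality statement require the detailed reasoning carried out in $\cite{CDF}$; if you intend your text as a full proof rather than a plan, those steps must be written out.
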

Before showing the existence of random attractor, we will cite Aubin-Lions Lemma which is vital to prove our main result Theorem 3.1 in this section.
\begin{lemma}
Let $B_{0}, B, B_{1}$ be Banach spaces such that $B_{0}, B_{1}$ are reflexive and $B_{0}\overset{c}{\subset}B\subset B_{1}.$ Define,
for $0<T<\infty,$
\begin{eqnarray*}
X:=\Big{\{}h \Big{|}h\in L^{2}([0,T]; B_{0}), \frac{dh}{dt}\in L^{2}([0,T]; B_{1})\Big{\}}.
\end{eqnarray*}
Then $X$ is a Banach space equipped with the norm $|h|_{L^{2}([0,T]; B_{0})}+|h'|_{L^{2}([0,T]; B_{1})}.$ Moreover, $$X \overset{c}\subset {L^{2}([0,T]; B)}.$$
\end{lemma}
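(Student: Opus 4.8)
The plan is to proceed in three stages: first verify that $X$ is complete, then establish the compactness of the embedding $X\overset{c}{\subset}L^{2}([0,T];B)$ by combining an interpolation (Ehrling) inequality with a weak-compactness argument. Completeness is routine: if $\{h_{n}\}$ is Cauchy in $X$, then $h_{n}\to h$ in $L^{2}([0,T];B_{0})$ and $h_{n}'\to g$ in $L^{2}([0,T];B_{1})$ by completeness of the two $L^{2}$-spaces, and testing against $\phi\in C_{c}^{\infty}((0,T))$ shows $g=h'$ in the sense of distributions, so $h\in X$ and $h_{n}\to h$ in $X$. For the compactness, let $\{h_{n}\}$ be bounded in $X$, say by $M$. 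Since $B_{0}$ and $B_{1}$ are reflexive, $L^{2}([0,T];B_{0})$ and $L^{2}([0,T];B_{1})$ are reflexive, so after passing to a subsequence I may assume $h_{n}\rightharpoonup h$ weakly in $L^{2}([0,T];B_{0})$ and $h_{n}'\rightharpoonup h'$ weakly in $L^{2}([0,T];B_{1})$ (the weak limit of the derivatives is the derivative of the weak limit). Subtracting $h$, it suffices to treat the case $h=0$ and prove $h_{n}\to 0$ strongly in $L^{2}([0,T];B)$.

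The main tool is an Ehrling-type interpolation inequality: because $B_{0}\overset{c}{\subset}B\subset B_{1}$, for every $\eta>0$ there is a constant $C_{\eta}$ with $\|v\|_{B}\leq \eta\|v\|_{B_{0}}+C_{\eta}\|v\|_{B_{1}}$ for all $v\in B_{0}$. I would prove this by contradiction: a violating sequence, normalized in $B_{0}$, admits a $B$-convergent subsequence whose $B$-limit has norm at least $\eta$, while its $B_{1}$-norms tend to $0$, forcing the limit to vanish. Integrating the pointwise inequality in $t$ gives $\|h_{n}\|_{L^{2}([0,T];B)}\leq \eta\|h_{n}\|_{L^{2}([0,T];B_{0})}+C_{\eta}\|h_{n}\|_{L^{2}([0,T];B_{1})}\leq \eta M+C_{\eta}\|h_{n}\|_{L^{2}([0,T];B_{1})}$. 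Since $\eta$ is arbitrary, the whole matter reduces to showing $h_{n}\to 0$ strongly in $L^{2}([0,T];B_{1})$.

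This last step is the crux, and the point I expect to require the most care. From the derivative bound, $\|h_{n}(t)-h_{n}(s)\|_{B_{1}}\leq\|h_{n}'\|_{L^{2}([0,T];B_{1})}|t-s|^{1/2}\leq M|t-s|^{1/2}$, so I write (for $t+\delta\le T$) $h_{n}(t)=\frac{1}{\delta}\int_{t}^{t+\delta}(h_{n}(t)-h_{n}(s))\,ds+\frac{1}{\delta}\int_{t}^{t+\delta}h_{n}(s)\,ds$. The first term has $B_{1}$-norm at most $M\delta^{1/2}$, uniformly in $n$ and $t$. For the second, testing against $B_{0}^{*}$ shows $\int_{t}^{t+\delta}h_{n}(s)\,ds\rightharpoonup 0$ weakly in $B_{0}$, and since a compact operator associated with $B_{0}\overset{c}{\subset}B\hookrightarrow B_{1}$ carries weakly convergent sequences to strongly convergent ones, this term tends to $0$ in $B_{1}$ as $n\to\infty$ for fixed $t,\delta$. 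Hence $\limsup_{n}\|h_{n}(t)\|_{B_{1}}\leq M\delta^{1/2}$ for every $\delta$, giving $h_{n}(t)\to 0$ in $B_{1}$ for each $t$ (endpoints $t>T-\delta$ handled with a backward average). Because $W^{1,2}([0,T];B_{1})\hookrightarrow C([0,T];B_{1})$ bounds $\|h_{n}(t)\|_{B_{1}}$ uniformly, dominated convergence yields $\int_{0}^{T}\|h_{n}(t)\|_{B_{1}}^{2}\,dt\to 0$. The delicate interchange here is precisely the coupling of the time-averaging scale $\delta$ with the passage $n\to\infty$ and the use of the compact embedding to upgrade weak $B_{0}$-convergence of the averages to strong $B_{1}$-convergence; once it is in place, the three estimates combine with the reduction of the previous paragraph to give $h_{n}\to 0$ in $L^{2}([0,T];B)$, completing the proof.
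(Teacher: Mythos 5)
Your argument is correct, and it is the standard proof of the Aubin--Lions compactness lemma: completeness of $X$, reduction via the Ehrling interpolation inequality $\|v\|_{B}\leq \eta\|v\|_{B_{0}}+C_{\eta}\|v\|_{B_{1}}$, extraction of weak limits using reflexivity of $L^{2}([0,T];B_{0})$ and $L^{2}([0,T];B_{1})$, and the time-averaging trick combined with the $\frac{1}{2}$-H\"older bound $\|h_{n}(t)-h_{n}(s)\|_{B_{1}}\leq M|t-s|^{1/2}$ to upgrade weak convergence to strong convergence in $L^{2}([0,T];B_{1})$, exactly as in Lions and Temam. Note that the paper does not prove this lemma at all: it is stated as Lemma 2.1 and simply cited as the classical Aubin--Lions result, so there is no in-paper argument to compare against; your proof coincides with the one in the standard references the paper relies on. The only points worth making explicit if you write it up in full are the injectivity of the embedding $B\subset B_{1}$ in the contradiction step of Ehrling's lemma, and the fact that elements of $X$ have absolutely continuous $B_{1}$-valued representatives so that the pointwise values $h_{n}(t)$ are well defined.
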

The following Lemma, a special case of a general result of Lions and Magenes $\cite{LM}$,  will help us to show the continuity of the solution to stochastic PEs with respect to time in $(H^{1}(\mho))^{3}.$ For the proof of the Lemma we can see $\cite{T2}.$
\begin{lemma}
Let $V , H, V'$ be three Hilbert spaces such that $V \subset H = H¡ä\subset V' $
, where$ H'$ and $V'$ are the dual spaces of $H$ and $V$ respectively. Suppose $u \in
L^{2}(0, T; V )$ and $u'\in L^{2}(0, T; V')$. Then $u$ is almost everywhere equal to a function
continuous from $[0, T]$ into $H$.
\end{lemma}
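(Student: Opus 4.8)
The plan is to reduce strong continuity into $H$ to two ingredients: the scalar energy identity
\begin{equation*}
|u(t)|_{H}^{2}-|u(s)|_{H}^{2}=2\int_{s}^{t}\langle u'(\tau),u(\tau)\rangle\,d\tau,\qquad 0\le s\le t\le T,
\end{equation*}
where $\langle\cdot,\cdot\rangle$ denotes the $V'$--$V$ duality pairing extending the inner product of $H$, together with weak continuity of $u$ as an $H$-valued map. Since $u\in L^{2}(0,T;V)\subset L^{2}(0,T;H)$, we have $u(\tau)\in H$ for a.e.\ $\tau$, and the Cauchy--Schwarz inequality gives $|\langle u'(\tau),u(\tau)\rangle|\le \|u'(\tau)\|_{V'}\|u(\tau)\|_{V}\in L^{1}(0,T)$, so the right-hand side above is a well-defined absolutely continuous function of $t$.

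To obtain the identity I would first regularize in time. Working on a compact subinterval $[s,t]\subset(0,T)$ (so that mollification does not see the endpoints), I convolve with a standard symmetric kernel $\rho_{\varepsilon}$ to produce $u_{\varepsilon}=\rho_{\varepsilon}*u$. These approximants are smooth in time with values in $V$, so the classical product rule yields $\frac{d}{d\tau}|u_{\varepsilon}(\tau)|_{H}^{2}=2\langle u_{\varepsilon}'(\tau),u_{\varepsilon}(\tau)\rangle$, which integrates to the energy identity for $u_{\varepsilon}$. Standard properties of mollifiers give $u_{\varepsilon}\to u$ in $L^{2}(0,T;V)$ and $u_{\varepsilon}'\to u'$ in $L^{2}(0,T;V')$; passing to a subsequence I may also assume $u_{\varepsilon}(\tau)\to u(\tau)$ in $V$, hence in $H$, for a.e.\ $\tau$. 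Letting $\varepsilon\to0$ in the pairing (a product of a strongly $V'$-convergent factor with a strongly $V$-convergent factor) and in the $H$-norms at Lebesgue points then delivers the identity for $u$ at a.e.\ pair $(s,t)$, and hence everywhere after redefining $u$ on a null set.

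The identity shows that $t\mapsto|u(t)|_{H}^{2}$ agrees a.e.\ with a continuous function, in particular a bounded one. Independently, $u\in L^{2}(0,T;V')$ with $u'\in L^{2}(0,T;V')$ gives $u\in H^{1}(0,T;V')\hookrightarrow C([0,T];V')$, so after modification on a null set $u$ is continuous, hence weakly continuous, as a $V'$-valued map. Combining this with the boundedness of $|u(t)|_{H}$ and the density of $V$ in $H$: for fixed $v\in V$ the map $t\mapsto\langle u(t),v\rangle$ is continuous, and since $\{u(t)\}$ is $H$-bounded an $\varepsilon/3$ argument extends continuity of $t\mapsto(u(t),v)_{H}$ to every $v\in H$. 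Thus $u$ is weakly continuous into $H$.

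Finally I upgrade weak continuity to strong continuity using the Hilbert structure: if $t_{n}\to t_{0}$ then $u(t_{n})\rightharpoonup u(t_{0})$ in $H$ while $|u(t_{n})|_{H}\to|u(t_{0})|_{H}$ (the norm being continuous), so that $|u(t_{n})-u(t_{0})|_{H}^{2}=|u(t_{n})|_{H}^{2}-2(u(t_{n}),u(t_{0}))_{H}+|u(t_{0})|_{H}^{2}\to0$. Hence $u\in C([0,T];H)$, which is the assertion. The main obstacle is the rigorous justification of the energy identity: ensuring the mollification is legitimate up to (and including) the endpoints and passing to the limit in the duality pairing $\langle u_{\varepsilon}',u_{\varepsilon}\rangle$, where one factor converges only in $V'$ and the other only in $V$. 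Everything after that identity is a soft functional-analytic argument.
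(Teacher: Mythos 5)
The paper does not actually prove this lemma: it is stated as a special case of a general result of Lions--Magenes, with the proof referred to Temam's book, so there is no internal argument to compare yours against. Your reconstruction follows the classical route (mollify in time, pass to the limit to get the energy identity $|u(t)|_{H}^{2}-|u(s)|_{H}^{2}=2\int_{s}^{t}\langle u'(\tau),u(\tau)\rangle\,d\tau$, deduce weak $H$-continuity from $V'$-continuity plus $H$-boundedness, then upgrade to strong continuity using weak convergence together with convergence of norms), and that architecture is the standard and correct one.

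There is, however, one genuine soft spot, and it sits exactly where you declare the argument to become routine: the passage from ``the identity holds for a.e.\ pairs $(s,t)$'' to ``hence everywhere after redefining $u$ on a null set,'' which is what licenses ``the norm being continuous'' in your final step. The only redefinition available to you is the $V'$-continuous (hence weakly $H$-continuous) representative $\tilde u$; at a point $t_{0}$ of the exceptional null set, weak convergence $u(t_{n})\rightharpoonup\tilde u(t_{0})$ along good points $t_{n}\to t_{0}$ gives only $|\tilde u(t_{0})|_{H}^{2}\le\lim\psi(t_{n})=\psi(t_{0})$, where $\psi$ is the continuous function produced by the identity; the reverse inequality is not automatic, and asserting it amounts to assuming the strong continuity you are trying to prove (if instead you redefine $u(t_{0})$ so as to force the norm identity, weak continuity is no longer guaranteed). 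Two standard ways to close the gap: (i) apply the energy identity to differences $u_{\varepsilon}-u_{\delta}$ of mollifications and average the initial time over common Lebesgue points, obtaining $\sup_{t}|u_{\varepsilon}(t)-u_{\delta}(t)|_{H}\to 0$, so the mollified family is Cauchy in $C([0,T];H)$ and its limit is the desired continuous representative; or (ii) prove the uniform bound $\sup_{t}|\varphi(t)|_{H}^{2}\le C\bigl(\|\varphi\|_{L^{2}(0,T;V)}^{2}+\|\varphi'\|_{L^{2}(0,T;V')}^{2}\bigr)$ for smooth $\varphi$ and conclude by density of smooth functions in $\{u\in L^{2}(0,T;V):u'\in L^{2}(0,T;V')\}$; Lions--Magenes reach the same conclusion more generally by interpolation, which is the ``general result'' the paper alludes to. With either repair (and your endpoint issue handled by extending $u$ by reflection before mollifying), the rest of your argument is sound.
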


\section{Main Result}
The aim of the paper is to prove:
\begin{theorem}
Let $Q\in L^{2}(\mho), \upsilon_{0}\in \mathcal{V}_{1}, T_{0}\in \mathcal{V}_{2} , \alpha>\frac{1}{8}$ in $(1.4).$ Assume conditions $(2.12)$ and $(2.15)$
hold. Then the solution operator $(S(t,s;\omega))_{t\geq s,\omega\in \tilde{\Omega}} $ of 3D stochastic PEs $(1.1)-(1.5): S(t,s;\omega)(\upsilon_{s}, T_{s})=(\upsilon(t), T(t) ) $ has properties $\mathrm{(i)}-\mathrm{(iv)}$ of Theorem 2.2 and possesses a compact absorbing ball $\mathcal{B}(0,\omega)$ in $\mathcal{V}$ at time $0.$  Furthermore, for $\tilde{\mathbb{P}}$-a.e. $\omega,$ the set
$$\mathcal{A}(\omega)=\overline{\bigcup_{B\subset \mathcal{V}}\mathcal{A}(B,\omega) } $$
where the union is taken over all the bounded subsets of $\mathcal{V},$ is the random attractor of stochastic PEs $(1.1)-(1.5)$ and possesses the properties $(1)-(7)$ of Theorem $2.2$ with space $X$ replaced by space $\mathcal{V}.$
\end{theorem}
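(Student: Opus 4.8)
The plan is to verify the hypotheses of the abstract Theorem 2.2 with $X=\mathcal{V}$ and then read off conclusions (1)--(7). First I would set up the random dynamical system. Using the substitution $u=\upsilon-Z_{1}$, $\theta=T-Z_{2}$, the stochastic system $(1.1)$--$(1.5)$ becomes the random pathwise system $(2.17)$--$(2.22)$, whose coefficients depend on $\omega$ only through the fractional Ornstein--Uhlenbeck processes $Z_{1},Z_{2}$; by Propositions 2.1 and 2.2 these have paths in $C([0,\tau];(H^{3}(\mho))^{2})$ and $C([0,\tau];H^{3}(\mho))$ respectively. The global existence and uniqueness of strong solutions $U=(\upsilon,T)\in C([0,\mathcal{T}];\mathcal{V})\cap L^{2}([0,\mathcal{T}];(H^{2}(\mho))^{3})$ (Definition 2.4), established through the a priori estimates of the appendix, then defines the solution operator $S(t,s;\omega)(\upsilon_{s},T_{s})=(\upsilon(t),T(t))$. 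Property (i) (the cocycle identity) follows from uniqueness, (ii) (continuity on $\mathcal{V}$) from the continuous dependence on the initial datum proved below, (iii) (measurability) from the measurable dependence of $Z_{j}$ and of the solution map on $\omega$, and (iv) (right-continuity in $s$) from $U\in C([0,\mathcal{T}];\mathcal{V})$. The flow $\vartheta_{t}$ given by $\vartheta_{t}(\omega)(s)=\omega(t+s)-\omega(t)$ preserves $\tilde{\mathbb{P}}$ because the fractional Wiener process has stationary increments, and condition (c) holds by uniqueness together with the stationarity of $Z_{j}$.

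Next I would establish an absorbing ball in $\mathcal{V}$. Testing $(2.17)$ with $A_{1}u$ and $(2.18)$ with $A_{2}\theta$ and adding, one obtains an energy inequality $\frac{d}{dt}\|U\|_{1}^{2}+c\|U\|_{1}^{2}\le g(\omega,t)$, where the forcing $g$ collects $Q$, $f\upsilon^{\perp}$ and all terms containing $Z_{1},Z_{2}$, and the dissipativity constant $c>0$ comes from the coercivity of $A_{1},A_{2}$ (here the hypothesis $\alpha>\frac{1}{8}$ enters to keep the boundary contribution in $\mathcal{V}_{2}$ positive). Controlling $g$ uses the regularity of $Z_{j}$ from Propositions 2.1--2.2 and, crucially, the polynomial growth and stationary-increment properties of the fractional Brownian motion, which guarantee that the resulting random radius $r(\omega)$ is tempered. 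A pathwise Gronwall argument, pulled back along $\vartheta_{s}\omega$ as $s\to-\infty$, produces a bounded set $\mathcal{B}_{0}(\omega)\subset\mathcal{V}$ that absorbs every bounded $B\subset\mathcal{V}$ in the sense of Definition 2.2.

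The heart of the argument is the compactness of $S(t,s;\omega)$. Fix $s<t$ and a bounded sequence of initial data $(U_{n}^{0})\subset\mathcal{V}$, and write $U_{n}(\cdot)=S(\cdot,s;\omega)U_{n}^{0}$. The a priori estimates furnish a bound, uniform in $n$, on $U_{n}$ in $L^{2}([s,t];(H^{2}(\mho))^{3})$ together with a bound on $\frac{dU_{n}}{dt}$ in $L^{2}([s,t];\mathcal{V}')$. Since $(H^{2}(\mho))^{3}\overset{c}{\subset}\mathcal{V}\subset\mathcal{V}'$, Lemma 2.1 (Aubin--Lions) makes $(U_{n})$ precompact in $L^{2}([s,t];\mathcal{V})$, so along a subsequence $U_{n}(\sigma)$ converges in $\mathcal{V}$ for almost every $\sigma$. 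Combining the uniform $L^{2}([s,t];(H^{2})^{3})$ bound with a Chebyshev and reverse-Fatou argument, I can select an intermediate time $\bar s\in(s,t)$ and a further subsequence along which $U_{n}(\bar s)$ stays bounded in $(H^{2}(\mho))^{3}$; the compact embedding $(H^{2}(\mho))^{3}\hookrightarrow\mathcal{V}$ then yields a subsequence with $U_{n}(\bar s)\to U^{*}$ in $\mathcal{V}$. Propagating from $\bar s$ to $t$ via the continuity of the solution in $\mathcal{V}$ with respect to the initial datum (property (ii), with time-continuity supplied by Lemma 2.2), and using the cocycle identity $U_{n}(t)=S(t,\bar s;\omega)U_{n}(\bar s)$, gives $U_{n}(t)\to S(t,\bar s;\omega)U^{*}$ in $\mathcal{V}$, proving that $S(t,s;\omega)$ maps bounded sets to precompact sets.

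With these ingredients the conclusion follows quickly. Setting, for a unit time step, $\mathcal{B}(0,\omega)=\overline{S(0,-1;\omega)\mathcal{B}_{0}(\vartheta_{-1}\omega)}$, the compactness of $S(0,-1;\omega)$ makes $\mathcal{B}(0,\omega)$ compact, while the absorbing property of $\mathcal{B}_{0}$ together with the cocycle identity (i) makes it absorbing at time $0$. Theorem 2.2, applied with $X=\mathcal{V}$, then delivers the random attractor $\mathcal{A}(\omega)=\overline{\bigcup_{B\subset\mathcal{V}}\mathcal{A}(B,\omega)}$ and all properties (1)--(7). The hard part throughout will be the two a priori estimates that feed the compactness step, namely the uniform $(H^{2})$ bound and the $H^{1}$ continuity with respect to the initial datum, because the vertical velocity $w=\varphi(\upsilon)$ is only a diagnostic variable recovered from $\upsilon$ by vertical integration, so its derivatives admit no direct bound; overcoming this requires exploiting the special geometric structure of the $w\,\partial_{z}$ terms to close the estimates, which is exactly the delicate computation deferred to the appendix.
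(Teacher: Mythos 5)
Your overall architecture coincides with the paper's: pathwise reduction via the fractional O--U processes, verification of (i)--(iv) from Theorem 3.2, compactness of $S(t,-1;\omega)$ obtained from a uniform $L^{2}$-in-time $(H^{2}(\mho))^{3}$ bound plus Aubin--Lions (Lemma 2.1), propagation from a good intermediate time using the Lipschitz dependence on the initial datum in $\mathcal{V}$, the compact absorbing set $\overline{S(0,-1;\omega)B(-1,r(\omega))}$, and finally Theorem 2.2. Your variant of the selection step (Chebyshev/Fatou to find $\bar s$ with $U_{n}(\bar s)$ bounded in $(H^{2})^{3}$, then the compact embedding into $\mathcal{V}$) is a legitimate alternative to the paper's a.e.-time convergence in $\mathcal{V}$ extracted from the $L^{2}([-1,0];\mathcal{V})$ convergence; both hinge on exactly the same two ingredients.

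There is, however, a genuine gap in your absorbing-ball paragraph. The claimed one-step inequality $\frac{d}{dt}\|U\|_{1}^{2}+c\|U\|_{1}^{2}\le g(\omega,t)$, obtained by testing with $A_{1}u$ and $A_{2}\theta$ and with $g$ collecting only $Q$ and the $Z_{1},Z_{2}$ terms, is not available for the 3D primitive equations: the nonlinear terms $(u\cdot\nabla)u$ and $\varphi(u)\partial_{z}u$ tested against $A_{1}u$ produce contributions of the type $\|u\|_{1}\|u\|_{2}^{2}$ and $\|u\|_{1}^{4}\|u\|_{1}^{2}$ (cf. (5.81)--(5.83), (5.135)--(5.137)), which cannot be absorbed into a solution-independent forcing; if such a closed $H^{1}$ inequality held, the global regularity theory of Cao--Titi would be trivial. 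This is precisely why the paper obtains the $\mathcal{V}$-absorbing ball through the staggered pullback cascade (4.49)--(4.65): the $L^{2}$ estimate (where the smallness/ergodic estimates of $Z_{1},Z_{2}$ from Lemmas 4.1--4.4 and the hypothesis $\alpha>\frac18$ enter to produce the dissipativity rate $\gamma_{1}$ in (4.49)), then $L^{4}$ bounds for $\theta$ and $\tilde u$, then $H^{1}$ bounds for $\bar u$, $\partial_{z}u$, $\nabla u$ and finally $\theta$, each absorbing at successively later times $-4,-3,-2,-1$ and each feeding the next through the nonlinear coefficients. Your closing remark correctly identifies the vertical-velocity difficulty, but you assign the hard work only to the $H^{2}$ integrability and the $H^{1}$ continuous dependence; as written, your absorbing-ball step would fail, and it must be replaced by this hierarchy (together with the temperedness supplied by Lemmas 4.1--4.4) before the rest of your argument, which is sound, can be run.
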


To prove this theorem, we need the
following result concerning global well-posedness of strong solution to stochastic PEs. The regularity of the strong solution is key to prove the Theorem 3.1.
\begin{theorem}
Let $Q\in L^{2}(\mho), \upsilon_{0}\in \mathcal{V}_{1}, T_{0}\in \mathcal{V}_{2} , \mathcal{T}>0.$ Assume conditions $(2.12)$ and $(2.15)$
hold. Then there exists a unique strong solution $(\upsilon, T )$ of the system $(1.1)-(1.5)$ or equivalently $(u, \theta)$ of the system $(2.17)-(2.22)$ on the interval $[0, \mathcal{T}]$
which is Lipschitz continuous  with respect to the initial data and the noises in $\mathcal{V}$ and $C([0,T]; \mathcal{V})$ respectively.
\end{theorem}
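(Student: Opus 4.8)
The plan is to argue pathwise. By Propositions 2.1 and 2.2, for $\tilde{\mathbb{P}}$-a.e.\ $\omega$ the Ornstein--Uhlenbeck processes are fixed functions $Z_1\in C([0,\mathcal{T}];(H^3(\mho))^2)$ and $Z_2\in C([0,\mathcal{T}];H^3(\mho))$, so the transformed system $(2.17)$--$(2.22)$ is a deterministic primitive-equation system with smooth lower-order forcing assembled from $Z_1,Z_2$. I would build the solution by the Galerkin method, projecting onto the spans of the first $n$ eigenfunctions of $A_1$ and $A_2$ and solving the resulting finite-dimensional ODE system; local-in-time solvability is immediate, and global solvability follows once the a priori bounds below are established. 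The equivalence $\upsilon=u+Z_1$, $T=\theta+Z_2$ then transfers everything back to $(1.1)$--$(1.5)$.

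The core is a hierarchy of a priori estimates, which I would relegate to the appendix (Section 5). First comes the $\mathcal{H}$-estimate, obtained by testing $(2.17)$--$(2.18)$ with $(u,\theta)$ and absorbing the $Z_j$-forcing by Young's inequality together with the $H^3$-bounds on $Z_1,Z_2$. Following Cao--Titi \cite{CT1} and Ju \cite{J}, I would then split the horizontal velocity into barotropic and baroclinic parts, derive $L^6$-type bounds, and crucially control $\partial_z u$ in $L^\infty_t L^2_x\cap L^2_tH^1_x$ separately; this is the step that tames the vertical velocity $w=\varphi(u+Z_1)$, whose $z$-derivative is $-\nabla\cdot(u+Z_1)$. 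Combining these yields the $\mathcal{V}$-bound and, upon testing against $A_1u$ and $A_2\theta$, the $L^2([0,\mathcal{T}];(H^2(\mho))^3)$-bound, establishing the required regularity $u\in C([0,\mathcal{T}];\mathcal{V}_1)\cap L^2([0,\mathcal{T}];(H^2(\mho))^2)$ and $\theta\in C([0,\mathcal{T}];\mathcal{V}_2)\cap L^2([0,\mathcal{T}];H^2(\mho))$; the temporal continuity into $\mathcal{V}$ then follows from Lemma 2.4 once $\partial_t u\in L^2([0,\mathcal{T}];\mathcal{V}_1')$ is checked. With these uniform bounds, Aubin--Lions (Lemma 2.3) extracts a subsequence converging strongly in $L^2([0,\mathcal{T}];\mathcal{V})$, which suffices to pass to the limit in the bilinear and transport terms and identify a strong solution.

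For uniqueness and Lipschitz dependence I would take two solutions $(u^{(1)},\theta^{(1)})$ and $(u^{(2)},\theta^{(2)})$ corresponding to data $(\upsilon_0^{(k)},T_0^{(k)})$ and processes $(Z_1^{(k)},Z_2^{(k)})$, and write the equations for the differences $\bar u=u^{(1)}-u^{(2)}$, $\bar\theta=\theta^{(1)}-\theta^{(2)}$. Testing with $(\bar u,\bar\theta)$ in $\mathcal{H}$ reproduces the $L^2$-Lipschitz bound of \cite{GH}; but the theorem asserts dependence in the $\mathcal{V}$-norm, so I must instead test the difference equations with $(A_1\bar u,A_2\bar\theta)$ and estimate $\tfrac{d}{dt}(\|\bar u\|_1^2+\|\bar\theta\|_1^2)$. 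The nonlinear terms then produce cubic expressions pairing $\nabla^2\bar u,\nabla^2\bar\theta$ against the already-controlled $H^2$-norms of the two solutions, which I would bound by anisotropic Ladyzhenskaya/Agmon inequalities adapted to the cylinder $\mho$ and close by Gronwall, the coefficient being integrable in $t$ thanks to the $L^2_tH^2_x$ regularity; the noise differences $Z_1^{(1)}-Z_1^{(2)}$, $Z_2^{(1)}-Z_2^{(2)}$ enter only as lower-order forcing and are absorbed via their $H^3$-continuity. Uniqueness is the special case of equal data.

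The main obstacle is exactly this $\mathcal{V}$-level continuous dependence. The vertical velocity $w=\varphi(u+Z_1)$ carries no a priori bound on its own derivatives, so the troublesome terms $\int_\mho \varphi(\bar u)\,\partial_z u^{(2)}\cdot A_1\bar u$ and their temperature analogue cannot be handled by a naive Sobolev embedding. The remedy, as signalled in the introduction, is to exploit the special structure of $\varphi$: since $w$ is the vertical integral of a horizontal divergence, an integration by parts in $z$ trades the uncontrolled factor for $\nabla\cdot\bar u$, which is dominated by $\|\bar u\|_1$, and this is combined with the $L^2_tH^2_x$ regularity of $u^{(2)}$ to render the Gronwall coefficient integrable. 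Once this delicate estimate closes, the Lipschitz bounds in $\mathcal{V}$ for the initial data and in $C([0,\mathcal{T}];\mathcal{V})$ for the noises follow at once.
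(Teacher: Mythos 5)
Your proposal follows essentially the same route as the paper: pathwise reduction to the random system $(2.17)$--$(2.22)$, Faedo--Galerkin with the Cao--Titi hierarchy of a priori estimates (barotropic/baroclinic splitting, $L^{p}$ bounds on the baroclinic part and on $\theta$, then $\partial_{z}u$ and the full $H^{1}$ and $L^{2}_{t}H^{2}_{x}$ bounds), time continuity in $\mathcal{V}$ via the Lions--Magenes lemma, and Lipschitz dependence by testing the difference equations with $(A_{1}\bar u,A_{2}\bar\theta)$ and closing a Gronwall argument whose coefficient is integrable thanks to the $L^{2}_{t}H^{2}_{x}$ regularity, with the vertical velocity tamed through $\varphi(\bar u)=-\int_{-1}^{z}\nabla\cdot\bar u\,d\lambda$ and anisotropic (Ladyzhenskaya/Agmon-type) estimates on the cylinder. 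The remaining differences are cosmetic: the paper works with $L^{4}$ rather than $L^{6}$ bounds, subtracts an auxiliary linear solution so the Galerkin iterates start from zero $H^{1}$ data, and applies the continuity lemma to $A_{1}^{1/2}u$ --- i.e.\ one needs $\partial_{t}u\in L^{2}([0,\mathcal{T}];H_{1})$ (equivalently $\partial_{t}A_{1}^{1/2}u\in L^{2}([0,\mathcal{T}];\mathcal{V}_{1}')$), not merely $\partial_{t}u\in L^{2}([0,\mathcal{T}];\mathcal{V}_{1}')$, to conclude continuity into $\mathcal{V}$.
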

\begin{Rem}\ \\
 $\mathbf{(1)}$
 Compared with regularity of strong solution in $\cite{GH},$  we improve the regularity of the strong solution by proving the continuity of strong solution with respect to initial data in $(H^{1}(\mho))^{3}$. This is key to prove the compact property of the solution operator in $\mathcal{V}$ and construct the compact absorbing ball in $\mathcal{V}$. Notice that $\cite{GH}$ only proved the strong solution is Lipschitz continuous in the space $( L^{2}(\mho))^{3}$ with respect to the initial data but this is not enough to study the asymptotical behavior in $(H^{1}(\mho))^{3}$ considered  here.\\
$\mathbf{(2)}$ With the help of Lemma 2.2, we prove the the continuity of strong solution with respect to time in $\mathcal{V}$ and obtain $a$ $priori$ estimates to prove the compact property of the solution operator in $\mathcal{V}$.\\
$\mathbf{(3)}$ We reduce the regularity of $Q$ from $H^{1}(\mho)$ to $L^{2}(\mho)$ which is more natural. There is some gap between $H^{1}(\mho)$ and $L^{2}(\mho).$ \\
$\mathbf{(4)}$ Due to the radiation properties of the clouds, we should consider the effect of the noise on the equations of  the heat conduction.  Therefore,  the stochastic PEs where both horizontal momentum equations and heat conduction equations are disturbed by the fractional noises are considered. Since the authors of $\cite{GH}$ didn't consider the effect of the noises on the heat conduct equation, our model is more complicated than the previous one. There are more challenges for the present model, especially when we consider the existence of random attractor.\\
$\mathbf{(5)}$ We simplify the proof of the the global well-posedness of stochastic PEs in $\cite{GH}$. By making a delicate  and careful  argument of the $a$ $priori$ estimates in $(L^{4}(\mho))^{3},$ we find the $a$ $priori$  estimates in $(L^{3}(\mho))^{3}$ is not necessary.\\
$\mathbf{(6)}$Lastly, we establish the regularity of the strong solution with respect to the fractional noises. We know this property is important to the ergodicity of stochastic PEs with Wiener noises.
\end{Rem}

\section{Proof Of Main Result}
The aim of this section is to prove the Theorem 3.1. Since we need the regularity of the strong solution and the $a$ $priori$ estimates to prove the compact property of solution operator, we should firstly prove Theorem 3.2. Then we study the growth property and the moment estimates of O-U process driven by fractional noises. The proof of the  Theorem 3.1 is completed in the last of this section.

\noindent ${4.1.\ \mathbf{Proof}\ \mathbf{Of}\  \mathbf{Theorem}\  \mathbf{3.2}. } $ Before giving our proof, we should notify that the global well-posedness of $(2.17)-(2.22)$ is equivalent to $(5.86)-(5.91).$
\par
In the following, we will complete our proof of the global well-posedness of stochastic PEs by three steps. Firstly, we will prove the global existence of strong solution. Then, we will show that the solution is continuous in the space $\mathcal{V}$ with respect to $t$. Finally,  we will obtain the continuity in $\mathcal{V}$ with respect to the initial data.  \\
$\mathbf{Step}\ 1$: We prove the global existence of strong solution.\\
In the appendix, we prove the local existence of the strong solution to stochastic PEs and obtain the $a$ $priori$ estimates in $\mathcal{V}.$   As we have indicated in the appendix that $[0, \tau_{*})$ is the maximal interval of existence of the solution of $(5.86)-(5.91),$ we infer that $\tau_{*}=\infty,$ a.s..
Otherwise, if there exists $A\in \mathcal{F}$ such that $\mathbb{P}(A)>0$ and for fixed $\omega\in A, \tau_{*}(\omega)<\infty,$ it is clear that
\begin{eqnarray*}
\limsup\limits_{t\rightarrow \tau_{*}^{-}(\omega)}(\| u(t)\|_{1}+\|\theta(t)\|_{1} )=\infty,\ \ \mathrm{for}\ \mathrm{any}\ \omega\in A,
\end{eqnarray*}
which contradicts the priori estimates $(5.134), (5.138)$
and $(5.140).$ Therefore $\tau_{*}=\infty,$ a.s.,  and the strong solution $(u, \theta)$ exists globally in time a.s..\\
$\mathbf{Step}\ 2$: We show the continuity of strong solutions with respect to $t$.\\
Multiplying $(5.86)$ by $\eta\in \mathcal{V}_{1}$, integrating with respect to space variable, yields
\begin{eqnarray*}
&&\langle \partial_{t}A_{1}^{\frac{1}{2}} u, \eta \rangle= \langle \partial_{t}u, A_{1}^{\frac{1}{2}}  \eta \rangle=- \langle A_{1} u, A_{1}^{\frac{1}{2}}\eta\rangle - \langle[(u+Z_{1})\cdot\nabla ](u+Z_{1}),A_{1}^{\frac{1}{2}}\eta\rangle\nonumber\\
&& -\langle \varphi(u+Z_{1}) \partial_{z}(u+Z_{1}), A_{1}^{\frac{1}{2}}  \eta \rangle-\langle  f( u+Z_{1})^{\bot}, A_{1}^{\frac{1}{2}}  \eta \rangle  \nonumber\\
&& + \langle\int_{-1}^{z}\nabla \theta dz',   A_{1}^{\frac{1}{2}}  \eta \rangle+ \langle\beta Z_{1}, A_{1}^{\frac{1}{2}}  \eta \rangle,
\end{eqnarray*}
where we have used $ \langle \nabla p_{s},  A_{1}^{\frac{1}{2}}  \eta \rangle=0$ which follows by integration by parts formula.
Taking a similar argument in  $(5.136)$, we get
\begin{eqnarray*}
\langle \varphi(u+Z_{1}) \partial_{z}(u+Z_{1}), A_{1}^{\frac{1}{2}}  \eta \rangle\leq \|u+Z_{1}\|_{1}\|u+Z_{1}\|_{2}|A_{1}^{\frac{1}{2}} \eta|_{2}.
\end{eqnarray*}
By H$\ddot{o}$lder inequality and Sobolev embedding theorem, we have
\begin{eqnarray*}
\|\partial_{t}(A_{1}^{\frac{1}{2}}u)\|_{\mathcal{V}_{1}'}\leq C(\|u\|_{2}+\|u+Z_{1}\|_{1}\|u+Z_{1}\|_{2}+|u|_{2}+|Z_{1}|_{2}+|\nabla \theta|_{2} ).
\end{eqnarray*}
In view of Proposition 2.1  and the following results about the regularity of $u$
\begin{eqnarray*}
u\in L^{\infty}([0, T]; \mathcal{V}_{1})\cap L^{2}([0, T]; (H^{2}(\mho))^{2}), \ \ \ \ Z_{1}\in C([0,T]; (H^{3}(\mho))^{2}),\ \ \forall T>0,
\end{eqnarray*}
we obtain
\begin{eqnarray*}
A_{1}^{\frac{1}{2}}u\in L^{2}([0,T]; \mathcal{V}_{1} ),\ \ \ \partial_{t}(A_{1}^{\frac{1}{2}}u )\in L^{2}([0,T]; \mathcal{V}_{1}' ),\ \ \forall T>0,
\end{eqnarray*}
which together with Lemma $2.2$ implies $$A_{1}^{\frac{1}{2}}u\in C([0,T]; H_{1})\ \mathrm{or}\  u\in C([0,T]; \mathcal{V}_{1}).  a.s.. $$ To study the regularity of $\theta,$ we choose $\xi\in \mathcal{V}_{2}$. By $(5.87)$ we have
\begin{eqnarray*}
\langle \partial_{t}A_{2}^{\frac{1}{2}} \theta ,      \xi \rangle& =&  \langle \partial_{t} \theta ,    A_{2}^{\frac{1}{2}}  \xi \rangle
=\langle  A_{2}\theta,   A_{2}^{\frac{1}{2}} \xi  \rangle- \langle (u+Z_{1})\cdot \nabla (\theta+Z_{2}), A_{2}^{\frac{1}{2}}\xi     \rangle  \nonumber\\
&+&\langle \varphi(u+Z_{1})\partial_{z}(\theta+Z_{2}),      A_{2}^{\frac{1}{2}}\xi   \rangle +\langle Q,    A_{2}^{\frac{1}{2}}\xi \rangle
+\beta \langle Z_{2},   A_{2}^{\frac{1}{2}}\xi   \rangle.
\end{eqnarray*}
Taking a similar argument as above, we get
\begin{eqnarray*}
\langle \varphi(u+Z_{1})\partial_{z}(\theta+Z_{2}),      A_{2}^{\frac{1}{2}}\xi   \rangle \leq C\|u+Z_{1}\|_{1}^{\frac{1}{2}}\|u+Z_{1}\|_{2}^{\frac{1}{2}}
\|\theta+Z_{2}\|_{1}^{\frac{1}{2}}\|\theta+Z_{2}\|_{2}^{\frac{1}{2}}| A_{2}^{\frac{1}{2}}\xi |_{2}.
\end{eqnarray*}
Then by H$\ddot{o}$lder inequality and Sobolev embedding theorem, we arrive at
\begin{eqnarray*}
|\partial_{t} A_{2}^{\frac{1}{2}} \theta|_{\mathcal{V}_{2}'}&\leq& C(|A_{2}\theta|_{2}+\|u+Z_{1}\|_{1}\|\theta+Z_{2}\|_{2}\nonumber\\
&&+ \|u+Z_{1}\|_{1}^{\frac{1}{2}}\|u+Z_{1}\|_{2}^{\frac{1}{2}}
\|\theta+Z_{2}\|_{1}^{\frac{1}{2}}\|\theta+Z_{2}\|_{2}^{\frac{1}{2}} +|\theta|_{2}+|Z_{2}|_{2}).
\end{eqnarray*}
By step one and Proposition 2.2, we have also proved that
\begin{eqnarray*}
\theta\in L^{\infty}([0, T]; \mathcal{V}_{2})\cap L^{2}([0, T]; H^{2}(\mho)), \ \ \ \ Z_{2}\in C([0,T]; H^{3}(\mho)),\ \ \forall T>0.
\end{eqnarray*}
Therefore, by the above argument we have
\begin{eqnarray*}
A_{1}^{\frac{1}{2}}\theta\in L^{2}([0,T]; \mathcal{V}_{2} ),\ \ \ \partial_{t}(A_{1}^{\frac{1}{2}}\theta )\in L^{2}([0,T]; \mathcal{V}_{1}' ),\ \ \forall T>0,
\end{eqnarray*}
which together with Lemma $2.2$ implies
\begin{eqnarray*}
A_{2}^{\frac{1}{2}}\theta\in C([0,T]; H_{2})\  \mathrm{or}\  \theta\in C([0,T]; \mathcal{V}_{2}),\ \  a.s..
\end{eqnarray*}
$\mathbf{Step}\ 3$: We obtain the continuity in $\mathcal{V}$ with respect to the initial data.\\
In order to show the uniqueness of the solutions. Let $(\upsilon_{1}, T_{1} )$ and $(\upsilon_{2}, T_{2}) $ be two solutions of the system
$(5.86)-(5.91)$ with corresponding pressure $p_{b}{'}$ and $p_{b}{''},$ and initial data $((\upsilon_{0})_{1}, (T_{0})_{1})$ and
$((\upsilon_{0})_{2}, (T_{0})_{2}),$ respectively. Denote by $v=\upsilon_{1}-\upsilon_{2}, p_{b}=p_{b}{'}-p_{b}{''}$ and $\mathbb{T}=T_{1}-T_{2}.$ Then
we have
\begin{eqnarray}
&&\partial_{t}v-\Delta v-\partial_{zz} v+[(\upsilon_{1}+Z_{1} )\cdot \nabla ]v
+(v\cdot \nabla)(\upsilon_{2}+Z_{1} )\nonumber\\
&&+\varphi( \upsilon_{1}+Z_{1})v_{z}+\varphi(v)\partial_{ z}(\upsilon_{2}+Z_{1}) +fk\times v+\nabla p_{b}
-\int_{-1}^{z}\nabla \mathbb{T}dz'=0,\\
&&\partial_{t}\mathbb{T}-\Delta \mathbb{T}-\partial_{zz}\mathbb{T}+[(\upsilon_{1}+Z_{1} )\cdot \nabla  ]\mathbb{T}
+(v\cdot \nabla)(T_{2}+Z_{2})\nonumber\\
&&+\varphi (\upsilon_{1}+Z_{1} )\mathbb{T}_{z}+\varphi (v) \partial_{z}(T_{2}+Z_{2}) =0,\\
&&\int_{-1}^{0}\nabla\cdot vdz=0,\\
&&v(x,y,z,0)=(\upsilon_{0})_{1}-(\upsilon_{0})_{2},\ \ \ \ \mathbb{T}(x,y,z,0)=(T_{0})_{1}-(T_{0})_{2},\\
&&(v,\mathbb{T}) \mathrm{satisfies}\ \mathrm{the}\  \mathrm{boundary}\  \mathrm{value}\  \mathrm{conditions}\ (5.89)-(5.90).
\end{eqnarray}
Recall the notations $L_{i}$ in the introduction with $i=1,2.$ Multiplying $L_{1}v$ in equation $(4.23)$ and integrating with respect to spatial variable yields,
\begin{eqnarray}
&&\frac{1}{2}\partial_{t} (|\nabla v|_{2}^{2}+|\partial_{z} v|_{2}^{2})+| \Delta v|_{2}^{2}+|\partial_{z} v|_{2}^{2}+|\nabla v_{z}|_{2}^{2}\nonumber\\
&=&-\int_{\mho}\{[(\upsilon_{1}+Z_{1}  )\cdot \nabla ]v  \}\cdot L_{1}v -\int_{\mho}\varphi(\upsilon_{1}+Z_{1}  ) v_{z}\cdot L_{1}v\nonumber\\
&&-\int_{\mho}[\varphi(v)\partial_{z} (\upsilon_{2}+Z_{1} ) ]\cdot L_{1}v -\int_{\mho}[(v\cdot\nabla )(\upsilon_{2}+Z_{1} )]\cdot L_{1}v \nonumber\\
&& - \int_{\mho}(fk\times v)\cdot L_{1}v
+\int_{\mho}(\int_{-1}^{z}\nabla \mathbb{T} dz' )\cdot L_{1}v\nonumber\\
&=&\Sigma_{i=1}^{6}K_{i}.
\end{eqnarray}
To estimate $K_{1},$ by the Agmon inequality and H$\ddot{o}$lder inequality, we obtain
\begin{eqnarray*}
K_{1}&\leq& |v_{1}+Z_{1}|_{\infty}|\nabla v|_{2}|L_{1}v|_{2}\nonumber\\
&\leq& C\|v_{1}+Z_{1}\|_{1}^{\frac{1}{2}}\|v_{1}+Z_{1}\|_{2}^{\frac{1}{2}}\|v\|_{1} \|v\|_{2}\nonumber\\
&\leq& C\|v_{1}+Z_{1}\|_{1}\|v_{1}+Z_{1}\|_{2}\|v\|_{1}^{2}+\varepsilon \|v\|_{2}^{2}.
\end{eqnarray*}
Then, using H$\ddot{o}$lder inequality, interpolation inequality and Sobolev embedding theorem,  we get the estimate of $K_{2},$
\begin{eqnarray*}
K_{2}&\leq&\int_{\mho}|\int_{-1}^{0}(\nabla \cdot v_{1}+ \nabla \cdot Z_{1} )dz |\cdot |v_{z}|\cdot|L_{1}v |\nonumber\\
&\leq & |\nabla \cdot \bar{v}_{1}+\nabla \cdot \bar{Z}_{1}  |_{L^{4}(M)}\int_{-1}^{0}|v_{z}|_{L^{4}(M)}|L_{1}v|_{L^{2}(M)}\nonumber\\
&\leq & C(\|v_{1}\|_{1}^{\frac{1}{2}}+\|Z_{1}\|_{1}^{\frac{1}{2}})(\|v_{1}\|_{2}^{\frac{1}{2}}+\|Z_{1}\|_{2}^{\frac{1}{2}})\|v\|_{1}^{\frac{1}{2}}
\|v\|_{2}^{\frac{1}{2}}\|v\|_{2}\nonumber\\
&\leq &\varepsilon \|v\|_{2}^{2}+C(\|v_{1}\|_{1}^{2}+\|Z_{1}\|_{1}^{2})(\|v_{1}\|_{2}^{2}+\|Z_{1}\|_{2}^{2})\|v\|_{1}^{2}.
\end{eqnarray*}
By an analogous argument as above, we infer that
\begin{eqnarray*}
K_{3}&\leq& \int_{\mho} |\int_{-1}^{0}\nabla\cdot v dz|\cdot|\partial_{z}( v_{2}+Z_{1}) |\cdot |L_{1}v|\nonumber\\
&\leq&|\nabla \cdot \bar{v} |_{L^{4}(M)}\int_{-1}^{0}|\partial_{z}(v_{2}+Z_{1})|_{L^{4}(M)}|L_{1}v|_{L^{2}(M)}\nonumber\\
&\leq&C\|v\|_{1}^{\frac{1}{2}}\|v\|_{2}^{\frac{1}{2}}\int_{-1}^{0}\|v_{2}+Z_{1}\|_{H^{1}(M)}^{\frac{1}{2}}\|v_{2}+Z_{1}\|_{H^{2}(M)}^{\frac{1}{2}}\|v\|_{2}dz\nonumber\\
&\leq&\varepsilon \|v\|_{2}^{2}+C\|v\|_{1}^{2}\|v_{2}+Z_{1}\|_{1}^{2}\|v_{2}+Z_{1}\|_{2}^{2}.
\end{eqnarray*}
By virtue of H$\ddot{o}$lder inequality,
\begin{eqnarray*}
K_{4}&\leq& |v|_{4}(|\nabla v_{2} |_{4}+|\nabla Z_{1}|_{4}  )|L_{1}v|_{2}\leq \varepsilon \|v\|_{2}^{2}+C\|v\|_{1}^{2}(\|v_{2}\|_{2}^{2}+\|Z_{1}\|_{2}^{2} ).
\end{eqnarray*}
After some basic calculus, we obtain the estimates of $K_{5}$ and $K_{6}$ as the following,
\begin{eqnarray*}
K_{5}+K_{6}&\leq& \varepsilon  \|v\|_{2}^{2}+ C|v|_{2}^{2}+C\|\mathbb{T}\|_{1}^{2}.
\end{eqnarray*}
By the boundary conditions, $|\mathbb{T}|_{2}^{2}$ is equivalent to $|\mathbb{T}_{z}|_{2}^{2}+|\mathbb{T}(z=0)|_{L^2(M)}^{2},$ then
$\|\mathbb{T}\|_{1}^{2}$ is equivalent to $|\nabla \mathbb{T}|_{2}^{2}+|\mathbb{T}_{z}|_{2}^{2}+|\mathbb{T}(z=0)|_{L^2(M)}^{2}.$ Keeping this in mind and taking an inner product of the equation $(4.24)$ with $L_{2}\mathbb{T}$, we have
\begin{eqnarray}
&&\frac{1}{2}\partial_{t}( |\nabla \mathbb{T}|_{2}^{2}+|\mathbb{T}_{z}|_{2}^{2}+\alpha|\mathbb{T}(z=0)|_{2}^{2} )
+ |\Delta \mathbb{T} |_{2}^{2}+|\mathbb{T}_{zz}|_{2}^{2}+|\nabla \mathbb{T}_{z}|_{2}^{2}+\alpha|\nabla \mathbb{T}(z=0) |_{2}^{2}\nonumber\\
&=&-\int_{\mho}[(\upsilon_{1}+Z_{1} )\cdot \nabla  \mathbb{T}]L_{2}\mathbb{T}
-\int_{\mho}[v\cdot \nabla (T_{2}+Z_{2})]L_{2}\mathbb{T}\nonumber\\
 &&-\int_{\mho}\varphi (\upsilon_{1}+Z_{1} )\mathbb{T}_{z}L_{2}\mathbb{T}+\varphi (v)\partial_{z}( T_{2} +Z_{2})L_{2}\mathbb{T}
 :=\sum_{i=1}^{4}J_{i}.
\end{eqnarray}
By Agmon inequality, we get that
\begin{eqnarray*}
J_{1}+J_{2}&\leq& |v_{1}+Z_{1}|_{\infty}|\nabla \mathbb{T}|_{2}|L_{2}\mathbb{T}|_{2}+|v|_{4}|\nabla (T_{2}+Z_{2})|_{4}|L_{2}\mathbb{T}|_{2}\nonumber\\
&\leq&C\|v_{1}+Z_{1}\|_{1}^{\frac{1}{2}}\|v_{1}+Z_{1}\|_{2}^{\frac{1}{2}}\|\mathbb{T}\|_{1}\|\mathbb{T}\|_{2}+C \|v\|_{1}\|T_{2}+Z_{2}\|_{2}\|\mathbb{T}\|_{2}\nonumber\\
&\leq& \varepsilon \|\mathbb{T}\|_{2}^{2}+C\|v\|_{1}^{2}\|T_{2}+Z_{2}\|_{2}^{2}+C\|v_{1}+Z_{1}\|_{1}\|v_{1}+Z_{1}\|_{2}\|\mathbb{T}\|_{1}^{2}.
\end{eqnarray*}
Taking an similar argument as $K_{2},$ we obtain
\begin{eqnarray*}
J_{3}&\leq&\int_{\mho} (\int_{-1}^{0}|\nabla \cdot (v_{1}+Z_{1} )|dz)\cdot |\mathbb{T}_{z}|\cdot |L_{2}\mathbb{T}| \nonumber\\
&\leq& \Big{(}\int_{-1}^{0}|L_{2}\mathbb{T}|_{L^{2}(M)}|\mathbb{T}_{z}|_{L^{4}(M)}dz \Big{)}|\int_{-1}^{0}|\nabla \cdot (v_{1}+Z_{1} )|dz|_{L^{4}(M)}\nonumber\\
&\leq&C\int_{-1}^{0}\|\mathbb{T}\|_{H^{2}(M)}^{\frac{3}{2}}\|\mathbb{T}\|_{H^{1}(M)}^{\frac{1}{2}}dz\int_{-1}^{0}|\nabla \cdot (v_{1}+Z_{1} )|_{L^{4}(M)}dz\nonumber\\
&\leq&C\|\mathbb{T}\|_{2}^{\frac{3}{2}}\|\mathbb{T}\|_{1}^{\frac{1}{2}}\int_{-1}^{0}\|v_{1}+Z_{1}\|_{H^{1}(M)}^{\frac{1}{2}}\|v_{1}+Z_{1}\|_{H^{2}(M)}^{\frac{1}{2}}dz\nonumber\\
&\leq&C\|\mathbb{T}\|_{2}^{\frac{3}{2}}\|\mathbb{T}\|_{1}^{\frac{1}{2}}\|v_{1}+Z_{1}\|_{1}^{\frac{1}{2}}\|v_{1}+Z_{1}\|_{2}^{\frac{1}{2}}\nonumber\\
&\leq&\varepsilon \|\mathbb{T}\|_{2}^{2}+C\|\mathbb{T}\|_{1}^{2}\|v_{1}+Z_{1}\|_{1}^{2}\|v_{1}+Z_{1}\|_{2}^{2}.
\end{eqnarray*}
Follow the similar steps, we can prove that
\begin{eqnarray*}
J_{4}&\leq& \int_{\mho}(\int_{-1}^{0}|\nabla \cdot v|)|\partial_{z}(T_{2}+Z_{2})  |\cdot|L_{2}\mathbb{T} |\nonumber\\
&\leq&\Big{(} \int_{-1}^{0} |L_{2}\mathbb{T}|_{L^{2}(M)}|\partial_{z}(T_{2}+Z_{2}) |_{L^{4}(M)}dz\Big{)}|\int_{-1}^{0}|\nabla \cdot v|dz |_{L^{4}(M)}\nonumber\\
&\leq&\Big{(} \int_{-1}^{0}\|\mathbb{T}\|_{H^{2}(M)}\|T_{2}+Z_{2}\|_{H^{1}(M)}^{\frac{1}{2}}\|T_{2}+Z_{2}\|_{H^{2}(M)}^{\frac{1}{2}}dz\Big{)}\int_{-1}^{0}|\nabla \cdot v|_{L^{4}(M)}dz\nonumber\\
&\leq& C\|\mathbb{T}\|_{2}\|T_{2}+Z_{2}\|_{1}^{\frac{1}{2}}\|T_{2}+Z_{2}\|_{2}^{\frac{1}{2}}\|v\|_{1}^{\frac{1}{2}}\|v\|_{2}^{\frac{1}{2}}\nonumber\\
&\leq& \varepsilon \|\mathbb{T}\|_{2}^{2}+\varepsilon \|v\|_{2}^{2}+C\|T_{2}+Z_{2}\|_{1}^{2}\|T_{2}+Z_{2}\|_{2}^{2}\|v\|_{1}^{2}.
\end{eqnarray*}
Denote
\begin{eqnarray*}
\eta(t)&:=& \|v(t)\|_{1}^{2}+\|\mathbb{T}(t)\|_{1}^{2},\\
\xi(t)&:=&(\|v_{1}\|_{1}+\|Z_{1}\|_{1})(\|v_{1}\|_{2}+\|Z_{1}\|_{2} )+(\|v_{1}\|_{1}^{2}+\|Z_{1}\|_{1}^{2} )(\|v_{1}\|_{2}^{2}+\|Z_{1}\|_{2}^{2} )\\
&&+(\|v_{2}\|_{1}^{2}+\|Z_{1}\|_{1}^{2} )(\|v_{2}\|_{2}^{2}+\|Z_{1}\|_{2}^{2} )+( \|v_{2}\|_{2}^{2}+\|Z_{1}\|_{2}^{2} )+\|T_{2}\|_{2}^{2}+\|T_{2}+Z_{2}\|_{1}^{2} \|T_{2}+Z_{2}\|_{2}^{2}  +1
\end{eqnarray*}
Since $|\nabla v|_{2}^{2}+|\partial_{z} v|_{2}^{2}$ is equivalent to $\|v\|_{1}^{2}$ and $|\nabla \mathbb{T}|_{2}^{2}+|\mathbb{T}_{z}|_{2}^{2}+|\mathbb{T}(z=0)|_{L^2(M)}^{2}$ is equivalent to  $\|\mathbb{T}\|_{1}^{2},$  letting $\varepsilon$ be small enough,  by $(4.28)-(4.29)$ and estimates of $K_{1}-K_{6}$ and $J_{1}-J_{4}$ we have
\begin{eqnarray}
\frac{d \eta(t)}{dt}+\|v\|_{2}^{2}+\|\mathbb{T}\|_{2}^{2}\leq \eta(t)\xi(t).
\end{eqnarray}
Since $(v_{i}(t),T_{i}(t)),i=1,2,$ is the solution of stochastic PEs in sense of Definition $2.3$ which ensure that
\begin{eqnarray*}
\int_{0}^{t}\xi(s)ds< \infty,\ \ a.s.,\ \mathrm{for}\ \mathrm{all}\ t\in(0, \infty),
\end{eqnarray*}
we conclude that
\begin{eqnarray*}
\eta(t)\leq \eta(0)e^{C\int_{0}^{t}\xi(s)ds}.
\end{eqnarray*}
Therefore, we proved that for any $t\in(0, \infty) , (u(t), \theta(t))$ is Lipschitz continuous in $\mathcal{V}_{1}\times \mathcal{V}_{2}$ with respect to
the initial data $(u(0), \theta(0)),$ which  is equivalent to that strong solution $ (\upsilon(t), T(t))$ of $(1.1)-(1.5)$ is Lipschitz continuous in $\mathcal{V}_{1}\times \mathcal{V}_{2}$ with respect to the initial data      $(v_{0}, T_{0}),$ for any $t\in (0, \infty).$ Following an analogous argument, we can also show that $ (\upsilon(t), T(t))_{t\in [0,\tau]}$ is Lipschitz continuous with respect to the noises in $C([0,\tau]; \mathcal{V})$ with the norm $\sup\limits_{t\in [0,\tau]}\|\cdot\|_{1}$ for arbitrary $\tau>0.$
\hspace{\fill}$\square$
\par
\noindent ${4.2.\ \mathbf{Growth }\ \mathbf{Property }\ \mathbf{And}\ \mathbf{Moment}\ \mathbf{Estimates}\  \mathbf{Of}\ \ \mathbf{O-U}\  \mathbf{Processes}. } $
\par
In order to obtain the existence of random attractor for system $(1.1)-(1.5),$ we need the following property of Ornstein-Uhlenbeck process driven by fBm.
First of all, we extend the definition of generalized Stieltjes integral $(2.6)$ to infinite interval. For arbitrary $a, b\in \mathbb{R}$ and $ a<b , $
we assume $f\in W^{\alpha,1}([a,b]; \mathbb{R})$( see section 2) and $g$ satisfies
\begin{eqnarray*}
C_{\alpha}(g)|_{a}^{b}&=&\frac{1}{\Gamma(1-\alpha)\Gamma(\alpha)}\sup\limits_{a<s_{1}<s_{2}<b}\Big{(}\frac{|g(s_{1})-g(s_{2})|}
{(s_{2}-s_{1})^{1-\alpha}}+\int_{s_{1}}^{s_{2}}\frac{|g(u)-g(s_{1})|}{(u-s_{1})^{2-\alpha}}du   \Big{)}\\
&=&\frac{1}{\Gamma(1-\alpha)\Gamma(\alpha)}\sup\limits_{a<s_{1}<s_{2}<b}|D^{1-\alpha}_{s_{2}-}g_{s_{2}-}(s_{1}) |<\infty.
\end{eqnarray*}
Then, for $t\in \mathbb{R}$, we define
\begin{eqnarray*}
\int^{t}_{-\infty}fdg=\lim\limits_{\tau\rightarrow -\infty}\int^{t}_{\tau}fdg
\end{eqnarray*}
provided the limit exists and $ \int^{t}_{\tau}fdg$ is defined as $(2.6)$ (or see $\cite{NR}$ and $\cite{Z}$).
\par
To study the long-term behavior of stochastic PEs,  we should consider the moment estimates and growth properties of O-U processes $(Z_{j}(t))_{t\in \mathbb{R}},j=1,2,$ where
\begin{eqnarray*}
Z_{j}(t)=\int_{-\infty}^{t}e^{-(t-s)(A_{j}+\beta)}dW^{H}_{j}(s).
\end{eqnarray*}
\begin{lemma}
Assume $(2.12)$ hold, then for any $\varepsilon>0$ and $m\geq2$, there exists positive constant $\beta$ depending only on $\varepsilon$ and $m$ such that
\begin{eqnarray}
E\|Z_{1}(0)\|_{3}^{m}<\varepsilon.
\end{eqnarray}
\end{lemma}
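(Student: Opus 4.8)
The plan is to reduce the estimate, mode by mode, to the scalar pathwise bound $(2.7)$, exactly as in the proof of Proposition 2.1; the only genuinely new feature is the damping factor $e^{\beta}$ in the semigroup, which simultaneously forces convergence of the integral over the infinite horizon $(-\infty,0]$ and produces the smallness in $\beta$. Expanding
\begin{eqnarray*}
Z_{1}(0)=\int_{-\infty}^{0}e^{s(A_{1}+\beta)}dW^{H}_{1}(s)=\sum_{i=1}^{\infty}\lambda_{i,1}^{\frac12}\int_{-\infty}^{0}e^{s(\gamma_{i,1}+\beta)}e_{i,1}\,dB^{H}_{i}(s)
\end{eqnarray*}
in the eigenbasis $(e_{i,1})$ of $A_{1}$, applying $A_{1}^{3/2}$ and then $(2.7)$, I obtain
\begin{eqnarray*}
\|Z_{1}(0)\|_{3}\leq C\sum_{i=1}^{\infty}\lambda_{i,1}^{\frac12}\gamma_{i,1}^{\frac32}\,C_{\alpha}(B^{H}_{i})\,\big|e^{\,\cdot\,(\gamma_{i,1}+\beta)}\big|_{\alpha,1},
\end{eqnarray*}
so the whole problem is transferred to controlling the $W^{\alpha,1}$-seminorm of the Ornstein--Uhlenbeck kernel over the infinite horizon and then taking $L^{m}(\Omega)$ norms.

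To turn the infinite horizon into a summable series I would split $(-\infty,0]=\bigcup_{n\geq0}[-(n+1),-n]$ and, on each unit block, change variables $s\mapsto s+n+1$ and invoke the stationarity of the increments of $B^{H}_{i}$. This replaces the kernel by $e^{-(n+1)(\gamma_{i,1}+\beta)}e^{r(\gamma_{i,1}+\beta)}$ with $r\in[0,1]$, and replaces $C_{\alpha}(B^{H}_{i})$ by the fractional constant of $B^{H}_{i}$ over a unit interval, which is identically distributed across $n$. Estimating $|e^{r(\gamma_{i,1}+\beta)}|_{\alpha,1}$ on $[0,1]$ through its two terms precisely as in the derivation of $(2.13)$ (using $1-e^{-x}\leq x^{\frac12}$ for the Hölder part) yields a bound of the form $C e^{\gamma_{i,1}+\beta}(\gamma_{i,1}+\beta)^{-\kappa}$ for some $\kappa>0$; combined with the prefactor $e^{-(n+1)(\gamma_{i,1}+\beta)}$ the exponential weights cancel down to the clean block estimate
\begin{eqnarray*}
\Big\|\int_{-(n+1)}^{-n}e^{s(A_{1}+\beta)}dW^{H}_{1}(s)\Big\|_{3}\leq C\sum_{i=1}^{\infty}\lambda_{i,1}^{\frac12}\gamma_{i,1}^{\frac32}(\gamma_{i,1}+\beta)^{-\kappa}e^{-n(\gamma_{i,1}+\beta)}\,C^{(n)}_{\alpha}(B^{H}_{i}),
\end{eqnarray*}
where $C^{(n)}_{\alpha}(B^{H}_{i})$ is distributed as the unit-interval fractional constant.

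It remains to assemble the $m$-th moment. Taking $L^{m}(\Omega)$ norms and applying Minkowski's inequality in both the $n$- and the $i$-sums moves the expectation onto the individual constants $C^{(n)}_{\alpha}(B^{H}_{i})$; by stationarity and the fact that the $B^{H}_{i}$ are identically distributed these all share one finite $L^{m}$-norm, finiteness following from the Gaussian moment bounds for the fractional Hölder constant (cf. Maslowski--Nualart $\cite{MN}$). Summing the geometric series $\sum_{n\geq0}e^{-n(\gamma_{i,1}+\beta)}=(1-e^{-(\gamma_{i,1}+\beta)})^{-1}\leq C$ leaves
\begin{eqnarray*}
\big(E\|Z_{1}(0)\|_{3}^{m}\big)^{\frac1m}\leq C(m)\sum_{i=1}^{\infty}\lambda_{i,1}^{\frac12}\gamma_{i,1}^{\frac32}(\gamma_{i,1}+\beta)^{-\kappa}.
\end{eqnarray*}
Since $\gamma_{i,1}\geq\gamma_{1,1}>0$, the summand is dominated by a fixed multiple of $\lambda_{i,1}^{\frac12}\gamma_{i,1}^{5/2}$, which is summable by $(2.12)$, while each summand tends to $0$ as $\beta\to\infty$; dominated convergence then forces the right-hand side to $0$, and choosing $\beta=\beta(\varepsilon,m)$ so that it falls below $\varepsilon^{1/m}$ gives $E\|Z_{1}(0)\|_{3}^{m}<\varepsilon$.

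The main obstacle is the middle step: extracting a strictly negative power $\kappa>0$ of $(\gamma_{i,1}+\beta)$ from the infinite-horizon seminorm, i.e. making the exponential damping beat both the polynomial-in-$\gamma_{i,1}$ weights produced by $A_{1}^{3/2}$ and the slow growth of the fractional Hölder constant of $B^{H}_{i}$ across the blocks. This is exactly the place where the stationary increments and the regularity of fBm have to be used carefully, and where the standing restriction $\alpha\in(1-H,\tfrac12)$ enters to keep $|e^{r(\gamma_{i,1}+\beta)}|_{\alpha,1}$ finite on each block.
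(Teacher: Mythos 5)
Your proposal is correct and follows essentially the same route as the paper's proof: split $(-\infty,0]$ into unit blocks, apply the pathwise bound $(2.7)$ blockwise to the exponentially damped kernel, use stationarity of the increments, the i.i.d.\ structure and Minkowski's inequality to bound the $m$-th moments uniformly over blocks and modes, and let the exponential damping together with the factor $(\gamma_{i,1}+\beta)^{-\kappa}$ produce smallness as $\beta\to\infty$ (the paper packages the same mechanism as a $\beta^{-1/2}$ bound on the block $[t-1,t]$ and an $e^{-\beta}$ bound on the remaining tail). The only caveat is that your opening display should be read purely as shorthand for the blockwise estimates, since the constant $C_{\alpha}(B^{H}_{i})$ taken over all of $(-\infty,0]$ need not be finite almost surely.
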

\begin{proof}
Due to $(2.12)$ and Theorem $2.5$ in $\cite{Z}$, we have
 for $t\in [-1,0]$
\begin{eqnarray}
\|Z_{1}(t)\|_{3}&\leq &\sum^{\infty}_{i=1}\sqrt{\lambda_{i,1}}\gamma_{i,1}^{\frac{3}{2}}| \int_{-\infty}^{t}e^{-(\gamma_{i,1}+\beta )(t-s)}dB^{H}_{i}(s) |\nonumber\\
&\leq& \sum^{\infty}_{i=1}\sqrt{\lambda_{i,1}}\gamma_{i,1}^{\frac{3}{2}}| \int_{t-1}^{t}e^{-(\gamma_{i,1}+\beta )(t-s)}dB^{H}_{i}(s) |\nonumber\\
&&+ \sum^{\infty}_{i=1}\sqrt{\lambda_{i,1}}\gamma_{i,1}^{\frac{3}{2}}| \int_{-\infty}^{t-1}e^{-(\gamma_{i,1}+\beta )(t-s)}dB^{H}_{i}(s) |\nonumber\\
&:=&I_{1}+I_{2}.
\end{eqnarray}
By the definition of stochastic calculus and inequality $(2.7),$ we  infer that
\begin{eqnarray}
I_{1}&\leq& \sum^{\infty}_{i=1}\sqrt{\lambda_{i,1}}\gamma_{i,1}^{\frac{3}{2}}\Big{(}C_{\alpha}(B_{i}^{H})|_{t-1}^{t}\Big{)}\Big{[}\int_{t-1}^{t}
\frac{e^{-(\gamma_{i,1}+\beta)(t-s)}}{[s-(t-1)]^{\alpha}}   ds\nonumber\\
&&+\int_{t-1}^{t}\int_{t-1}^{s}
\frac{e^{-(\gamma_{i,1}+\beta )(t-s)}-e^{-(\gamma_{i,1}+\beta )(t-u)}     }{(s-u)^{1+\alpha}}duds\Big{]}\nonumber\\
&\leq& \sum^{\infty}_{i=1}\sqrt{\lambda_{i,1}}\gamma_{i,1}^{\frac{3}{2}}\Big{(}C_{\alpha}(B_{i}^{H})|_{t-1}^{t}\Big{)} \Big{[} (\gamma_{i,1}+\beta )^{\alpha-1}
\int_{t-1}^{t}(t-s)^{\alpha-1}[s-(t-1)]^{\alpha}ds\nonumber\\
&&+
\int_{t-1}^{t}e^{-(\gamma_{i,1}+\beta )(t-s)}\int_{t-1}^{s}\frac{(\gamma_{i,1}+\beta )^{\frac{1}{2}}(s-u)^{\frac{1}{2}}}{(s-u)^{1+\alpha}}duds\Big{]}\nonumber\\
&\leq&C \sum^{\infty}_{i=1}\sqrt{\lambda_{i,1}}\gamma_{i,1}^{\frac{3}{2}}\Big{(}C_{\alpha}(B_{i}^{H})|_{t-1}^{t}\Big{)}[(\gamma_{i,1}+\beta )^{\alpha-1}
+(\gamma_{i,1}+\beta )^{-\frac{1}{2}}]\nonumber\\
&\leq&C \beta^{-\frac{1}{2}}\sum^{\infty}_{i=1}\sqrt{\lambda_{i,1}}\gamma_{i,1}^{\frac{3}{2}}C_{\alpha}(B_{i}^{H})|_{t-1}^{t}.
\end{eqnarray}
From $(4.33)$ and Minkovsky inequality, we reach
\begin{eqnarray}
(EI_{1}^{m})^{\frac{1}{m}}&\leq& C(EC^{m}_{\alpha}(B_{i}^{H})|_{t-1}^{t})^{\frac{1}{m}}\beta^{-\frac{1}{2}},
\end{eqnarray}
where he last two inequalities follow by the the facts that  $(B_{i}^{H})_{i\in \mathbb{N^{+}}}$ is a i.i.d. sequence and each fBm $ B_{i}^{H}$ has stationary increments which satisfy Lemma $7.5$ in $\cite{NR}.$
To estimate $I_{2},$ by $(4.32)$ we deduce that
\begin{eqnarray*}
I_{2}\leq\sum^{\infty}_{i=1}\sqrt{\lambda_{i,1}}\gamma_{i,1}^{\frac{3}{2}} \sum _{n=1}^{\infty}| \int_{(n+1)(t-1)}^{n(t-1)}e^{-(\gamma_{i,1}+\beta )(t-s)}dB^{H}_{i}(s) |.
\end{eqnarray*}
Since $ e^{-(\gamma_{i,1}+\beta )((t-1)-s)}$ has all derivatives of any order in any interval and $B^{H}_{i}$ is $\gamma$-H$\ddot{o}$lder continuous for all $\gamma <H,$
by the Theorem $4.2.1$ in $\cite{Z}$  stochastic calculus $ \int_{(n+1)(t-1)}^{n(t-1)}e^{-(\gamma_{i,1}+\beta )((t-1)-s)}dB^{H}_{i}(s)$ defined by $(2.6)$ is equal to Riemann-Stieltjes integral. Therefore
\begin{eqnarray}
I_{2}\leq\sum^{\infty}_{i=1}\sqrt{\lambda_{i,1}}\gamma_{i,1}^{\frac{3}{2}}e^{-(\gamma_{i,1}+\beta )} \sum _{n=1}^{\infty}| \int_{(n+1)(t-1)}^{n(t-1)}e^{-(\gamma_{i,1}+\beta )((t-1)-s)}dB^{H}_{i}(s) |.
\end{eqnarray}
For $n\geq2,$ by $(2.7)$ and elementary arguments we deduce
\begin{eqnarray}
&&|\int_{(n+1)(t-1)}^{n(t-1)}e^{-(\gamma_{i,1}+\beta )((t-1)-s)}dB^{H}_{i}(s)|\nonumber\\
&\leq& C_{\alpha}(B_{i}^{H})|_{(n+1)(t-1)}^{n(t-1)} \Big{(}\int_{(n+1)(t-1)}^{n(t-1)}\frac{e^{-(\gamma_{i,1}+\beta)(t-1-s)}}{(s-(n+1)(t-1))^{\alpha}}ds\nonumber\\
&&+\int_{(n+1)(t-1)}^{n(t-1)}\int_{(n+1)(t-1)}^{s}\frac{|e^{-(\gamma_{i,1}+\beta)(t-1-s)}-e^{-(\gamma_{i,1}+\beta)(t-1-u)} |}{(s-u)^{1+\alpha}}duds\Big{)}\nonumber\\
&\leq& C_{\alpha}(B_{i}^{H})|_{(n+1)(t-1)}^{n(t-1)} \Big{(}\int_{(n+1)(t-1)}^{n(t-1)}\frac{(\gamma_{i,1}+\beta)^{-2}(t-1-s)^{-2}}{(s-(n+1)(t-1))^{\alpha}}ds\nonumber\\
&&+\int_{(n+1)(t-1)}^{n(t-1)}e^{-(\gamma_{i,1}+\beta)(t-1-s)} \int_{(n+1)(t-1)}^{s} \frac{1- e^{-(\gamma_{i,1}+\beta)(s-u)}}{(s-u)^{1+\alpha}}duds\Big{)}\nonumber\\
&\leq& C_{\alpha}(B_{i}^{H})|_{(n+1)(t-1)}^{n(t-1)}\Big{(}(\gamma_{i,1}+\beta)^{-2}(n-1)^{-2}(t-1)^{-2}\frac{|t-1|^{1-\alpha}}{1-\alpha}\nonumber\\
&&+\int_{(n+1)(t-1)}^{n(t-1)}(\gamma_{i,1}+\beta)^{-2}(t-1-s)^{-2}\int_{(n+1)(t-1)}^{s}
\frac{(\gamma_{i,1}+\beta)^{\frac{1}{2}}(s-u)\frac{1}{2}}{(s-u)^{1+\alpha}}duds\Big{)}\nonumber\\
&\leq& C_{\alpha}(B_{i}^{H})|_{(n+1)(t-1)}^{n(t-1)}\Big{(} (\gamma_{i,1}+\beta)^{-2}(n-1)^{-2}(t-1)^{-2}|t-1|^{1-\alpha}\nonumber\\
&&+(\gamma_{i,1}+\beta)^{-\frac{3}{2}}(n-1)^{-2}(t-1)^{-2}|t-1|^{\frac{3}{2}-\alpha}\Big{)}\nonumber\\
&\leq& CC_{\alpha}(B_{i}^{H})|_{(n+1)(t-1)}^{n(t-1)}(\gamma_{i,1}+\beta)^{-\frac{3}{2}}(n-1)^{-2}.
\end{eqnarray}
When $n=1,$ using $(2.7)$ again we arrive at
\begin{eqnarray}
&&|\int_{(n+1)(t-1)}^{n(t-1)}e^{-(\gamma_{i,1}+\beta )((t-1)-s)}dB^{H}_{i}(s)|\nonumber\\
&\leq&  C_{\alpha}(B_{i}^{H})|_{2(t-1)}^{t-1}\Big{(} \int_{2(t-1)}^{t-1}\frac{e^{-(\gamma_{i,1}+\beta)(t-1-s)}}{(s-2(t-1))^{\alpha}}ds\nonumber\\
&&+\int_{2(t-1)}^{t-1}\int_{2(t-1)}^{s}\frac{|e^{-(\gamma_{i,1}+\beta)(t-1-s)}-e^{-(\gamma_{i,1}+\beta)(t-1-u)} |}{(s-u)^{1+\alpha}}duds\Big{)}\nonumber\\
&\leq&C_{\alpha}(B_{i}^{H})|_{2(t-1)}^{t-1}\Big{(} \int_{2(t-1)}^{t-1}\frac{(\gamma_{i,1}+\beta)^{\alpha-1}(t-1-s)^{\alpha-1}}{(s-2(t-1))^{\alpha}}ds\nonumber\\
&&+\int_{2(t-1)}^{t-1}e^{-(\gamma_{i,1}+\beta)(t-1-s)}\int_{2(t-1)}^{s}
\frac{(\gamma_{i,1}+\beta)^{\frac{1}{2}}(s-u)^{\frac{1}{2}}}{(s-u)^{1+\alpha}}duds\Big{)}\nonumber\\
&\leq& CC_{\alpha}(B_{i}^{H})|_{2(t-1)}^{t-1}(\gamma_{i,1}+\beta)^{-\frac{1}{2}} ,
\end{eqnarray}
where we have used $\alpha-1<-\frac{1}{2}$ and the fact $\int_{a}^{b}(b-s)^{\lambda-1}(s-a)^{-\lambda}ds=\frac{\pi}{\sin \lambda\pi}$ for arbitrary $a,b\in \mathbb{R},a<b$ and $\lambda\in (0,1)$ in the last step.
Combining $(4.35)-(4.37),$ we reach
\begin{eqnarray*}
I_{2}&\leq&C\sum^{\infty}_{i=1}\sqrt{\lambda_{i,1}}\frac{\gamma_{i,1}^{\frac{3}{2}}}{\sqrt{\gamma_{i,1}+\beta}}e^{-(\gamma_{i,1}+\beta )} C_{\alpha}(B_{i}^{H})|_{2(t-1)}^{t-1}\nonumber\\
 &&+ \sum^{\infty}_{i=1}\sqrt{\lambda_{i,1}}\gamma_{i,1}^{\frac{3}{2}}(\gamma_{i,1}+\beta )^{-\frac{3}{2}}e^{-(\gamma_{i,1}+\beta )}\sum _{n=2}^{\infty}C_{\alpha}(B_{i}^{H})|_{(n+1)(t-1)}^{n(t-1)}(n-1)^{-2}.
\end{eqnarray*}
Since $(B_{i}^{H})_{i\in \mathbb{N}^{+}}$ is i.i.d sequence and each fBm $B_{i}^{H}$ has stationary increments, by Lemmma $7.5$ in $\cite{NR}$ we obtain
\begin{eqnarray*}
EI_{2}&\leq&CE\Big{(}C_{\alpha}(B_{i}^{H})|_{2(t-1)}^{t-1}\Big{)}\sum^{\infty}_{i=1}\sqrt{\lambda_{i,1}}\gamma_{i,1}e^{-(\gamma_{i,1}+\beta )}\Big{(}1+ \sum _{n=2}^{\infty}C(n-1)^{-2}\Big{)}\nonumber\\
&\leq&CE\Big{(}C_{\alpha}(B_{i}^{H})|_{0}^{2}\Big{)}\sum^{\infty}_{i=1}\sqrt{\lambda_{i,1}}\gamma_{i,1}e^{-(\gamma_{i,1}+\beta )} \sum _{n=2}^{\infty}(n-1)^{-2}<\infty,
\end{eqnarray*}
which implies $I_{2}<\infty,a.s..$
Therefore, Minkowsky inequality we have
\begin{eqnarray}
(EI_{2}^{m})^\frac{1}{m}&\leq& C(EC^{m}_{\alpha}(B_{i}^{H})|_{0}^{2})^{\frac{1}{m}}\sum^{\infty}_{i=1} e^{-(\gamma_{i,1}+\beta )}\nonumber\\
&&+(EC^{m}_{\alpha}(B_{i}^{H})|_{2(t-1)}^{(t-1)})^{\frac{1}{m}}\sum^{\infty}_{i=1}\sum^{\infty}_{n=2}e^{-(\gamma_{i,1}+\beta )} (n-1)^{-2}\nonumber\\
&\leq& Ce^{-\beta}\sum^{\infty}_{i=1} e^{-\gamma_{i,1}}.
\end{eqnarray}
On account of $(4.34)$ and $(4.38),$ we can choose $\beta$ big enough such that $EI^{m}\leq \varepsilon $ which complete the proof.
\hspace{\fill}$\square$
\end{proof}
Taking a similar argument, we have the moment estimates for $Z_{2}(0)$ in the following.
\begin{lemma}
For any $\varepsilon>0$ and $m\geq2$, under condition $(2.15)$ there exists a positive constant $\beta$ depending only on $\varepsilon$ and $m$ such that
\begin{eqnarray}
E\|Z_{2}(0)\|_{3}^{m}<\varepsilon.
\end{eqnarray}
\end{lemma}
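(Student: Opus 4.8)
The plan is to reproduce the argument of Lemma 4.1 essentially line for line, replacing the index $1$ by $2$ everywhere: condition (2.15) now plays the role of (2.12), the operator $A_{2}$ with eigenpairs $(\gamma_{i,2},e_{i,2})$ replaces $A_{1}$, and the weights $\sqrt{\lambda_{i,2}}$ replace $\sqrt{\lambda_{i,1}}$. First I would expand $Z_{2}(0)$ in the orthonormal basis $(e_{i,2})_{i\in\mathbb{N}^{+}}$ of $H_{2}$ consisting of eigenfunctions of $A_{2}$, and control the $H^{3}(\mho)$-norm by $A_{2}^{3/2}$ up to a fixed constant, so that by Theorem $2.5$ in \cite{Z} and (2.15),
\begin{eqnarray*}
\|Z_{2}(0)\|_{3}\leq \sum_{i=1}^{\infty}\sqrt{\lambda_{i,2}}\,\gamma_{i,2}^{\frac{3}{2}}\Big| \int_{-\infty}^{0}e^{(\gamma_{i,2}+\beta)s}\,dB^{H}_{i}(s)\Big|.
\end{eqnarray*}
Then I would split each scalar integral at $s=-1$ into a near-past part $I_{1}$ over $[-1,0]$ and a far-past part $I_{2}$ over $(-\infty,-1]$, exactly as in (4.32).

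Next I would bound $I_{1}$ and $I_{2}$ as in (4.33)--(4.38). For $I_{1}$ the pathwise estimate (2.7), together with the elementary inequalities $e^{-(\gamma+\beta)r}\leq C(\gamma+\beta)^{\alpha-1}r^{\alpha-1}$ and $1-e^{-(\gamma+\beta)r}\leq (\gamma+\beta)^{1/2}r^{1/2}$, produces a prefactor $(\gamma_{i,2}+\beta)^{-1/2}\leq\beta^{-1/2}$; after Minkowski's inequality and the stationarity of the increments of $B^{H}_{i}$ (Lemma $7.5$ in \cite{NR}) this yields $(EI_{1}^{m})^{1/m}\leq C\beta^{-1/2}\sum_{i}\sqrt{\lambda_{i,2}}\,\gamma_{i,2}^{3/2}$. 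For $I_{2}$ I would decompose $(-\infty,-1]$ into the unit intervals $[-(n+1),-n]$ and use the Riemann--Stieltjes identification of Theorem $4.2.1$ in \cite{Z} to pull out the factor $e^{-(\gamma_{i,2}+\beta)}$; the $n$-th summand is then controlled by $C_{\alpha}(B^{H}_{i})$ on that interval times $(\gamma_{i,2}+\beta)^{-3/2}(n-1)^{-2}$ for $n\geq 2$ (and by $(\gamma_{i,2}+\beta)^{-1/2}$ for $n=1$), so that $\sum_{n}(n-1)^{-2}$ converges and, by stationarity again, $(EI_{2}^{m})^{1/m}\leq Ce^{-\beta}\sum_{i}e^{-\gamma_{i,2}}$.

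Finally, the summation over $i$ is governed by (2.15): the coefficient series $\sum_{i}\sqrt{\lambda_{i,2}}\,\gamma_{i,2}^{3/2}$ and $\sum_{i}\sqrt{\lambda_{i,2}}\,\gamma_{i,2}$ are both dominated by $\sum_{i}\sqrt{\lambda_{i,2}}\,\gamma_{i,2}^{5/2}<\infty$, and $\sum_{i}e^{-\gamma_{i,2}}$ converges since $\gamma_{i,2}\to\infty$, while the two contributions carry the decaying prefactors $\beta^{-1/2}$ and $e^{-\beta}$ respectively. Choosing $\beta$ large enough, depending only on $\varepsilon$ and $m$, then forces $E\|Z_{2}(0)\|_{3}^{m}<\varepsilon$. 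The point demanding the most care, and the main obstacle, is the uniform-in-$i$ control of the moments $EC_{\alpha}^{m}(B^{H}_{i})$ on the various intervals: since fBm is not a semimartingale and the integrals are defined only pathwise, these constants cannot be handled by an It\^o isometry but must be estimated through the H\"older-type bound (2.7) together with the distributional invariance of the increments, which is precisely why the i.i.d.\ and stationary-increment hypotheses on $(B^{H}_{i})$ are essential.
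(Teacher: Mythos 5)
Your proposal is correct and is essentially the paper's own argument: the paper proves Lemma 4.2 only by remarking that it follows from "a similar argument" to Lemma 4.1, and your write-up is precisely that argument with the index $1$ replaced by $2$ (same split at $s=-1$, same use of $(2.7)$, Theorem $4.2.1$ of \cite{Z}, Minkowski's inequality, and the i.i.d./stationary-increment control of $EC_{\alpha}^{m}(B^{H}_{i})$ via Lemma $7.5$ of \cite{NR}, with the prefactors $\beta^{-1/2}$ and $e^{-\beta}$ driving the smallness). No substantive difference from the paper's intended proof.
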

Concerning the growth properties of $(Z_{j}(t))_{t\in \mathbb{R}},j=1,2,$ we have Lemma 4.3 and Lemma 4.4 below.
\begin{lemma}
Under the condition of Proposition $2.1,$ there exists a random variable $r(\omega)$ taking finite values, i.e., $ r(\omega)<\infty, $ for all $\omega\in \Omega,$ such that
$$\|Z_{1}(t)\|_{3} < r(\omega),$$
for all $t\in (-\infty, 0].$
\end{lemma}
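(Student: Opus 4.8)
The plan is to reduce the $(H^{3}(\mho))^{2}$-estimate of the stationary Ornstein--Uhlenbeck process to a family of scalar fractional integrals, exactly as in Proposition 2.1 and Lemma 4.1, and then to bound these uniformly in $t\in(-\infty,0]$. Since $(e_{i,1})_{i}$ diagonalises $A_{1}$ with eigenvalues $\gamma_{i,1}$, for every $t\le 0$ one has
\[
\|Z_{1}(t)\|_{3} \le C\sum_{i=1}^{\infty}\sqrt{\lambda_{i,1}}\,\gamma_{i,1}^{3/2}\,|\zeta_{i}(t)|,
\qquad
\zeta_{i}(t):=\int_{-\infty}^{t} e^{-(\gamma_{i,1}+\beta)(t-s)}\,dB^{H}_{i}(s),
\]
so the whole task is to bound each scalar integral $\zeta_{i}(t)$ uniformly in $t$ and then to sum the resulting bounds against the weights $\sqrt{\lambda_{i,1}}\gamma_{i,1}^{3/2}$ using the summability hypothesis $(2.12)$.

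First I would split the half-line $(-\infty,t]$ into the unit blocks $[t-n-1,t-n]$, $n\ge 0$, and estimate the integral of $\zeta_{i}$ over each block through the Stieltjes bound $(2.7)$, precisely as the terms $I_{1}$ and $I_{2}$ were handled in the proof of Lemma 4.1. On $[t-n-1,t-n]$ the kernel is dominated by $e^{-(\gamma_{i,1}+\beta)n}$, while the fractional-integral factor generated by $(2.7)$ contributes only a negative power of $(\gamma_{i,1}+\beta)$; collecting the two gives
\[
|\zeta_{i}(t)| \le C\sum_{n=0}^{\infty} (\gamma_{i,1}+\beta)^{-1/2}\,e^{-(\gamma_{i,1}+\beta)n}\,C_{\alpha}(B^{H}_{i})\big|_{t-n-1}^{t-n},
\]
with $C_{\alpha}(B^{H}_{i})|_{a}^{b}$ the interval seminorm introduced before the statement of Lemma 4.1. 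The exponential factor $e^{-(\gamma_{i,1}+\beta)n}$ makes the $n$-series convergent, so everything is reduced to controlling the backward seminorms $C_{\alpha}(B^{H}_{i})|_{t-n-1}^{t-n}$.

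The hard part is to keep this bound finite after taking the supremum over the whole negative half-line, that is, to produce one random variable dominating all the blocks $[t-n-1,t-n]$ simultaneously for every $t\le 0$. Here I would exploit that $B^{H}_{i}$ has stationary increments, so each $C_{\alpha}(B^{H}_{i})|_{t-n-1}^{t-n}$ has the law of $C_{\alpha}(B^{H}_{i})|_{0}^{1}$ and enjoys the Gaussian-type moment bounds of Lemma 7.5 in \cite{NR} already used in Lemma 4.1. Combining these moment bounds with a Borel--Cantelli argument over the integer blocks $[-k-1,-k]$ yields a finite random variable $\rho_{i}(\omega)$ controlling the seminorm on every backward block, with at most slow growth in $k$; since the exponential weight $e^{-(\gamma_{i,1}+\beta)n}$ dominates this growth, one obtains $\sup_{t\le 0}|\zeta_{i}(t)|\le C(\gamma_{i,1}+\beta)^{-1/2}\rho_{i}(\omega)$. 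Setting
\[
r(\omega):=C\sum_{i=1}^{\infty}\sqrt{\lambda_{i,1}}\,\gamma_{i,1}^{3/2}(\gamma_{i,1}+\beta)^{-1/2}\rho_{i}(\omega),
\]
the series converges a.s.: because the $\rho_{i}$ are identically distributed, $E\,r(\omega)\le C\sum_{i}\sqrt{\lambda_{i,1}}\gamma_{i,1}<\infty$ by $(2.12)$ (as $\gamma_{i,1}\to\infty$ makes $\gamma_{i,1}\le\gamma_{i,1}^{5/2}$ eventually), whence $r(\omega)<\infty$ a.s. and is independent of $t$. The delicate point throughout is precisely this uniform-in-$t$ control of the infinite past; all the remaining analytic estimates are routine variants of those already carried out for $Z_{1}(0)$ in Lemma 4.1.
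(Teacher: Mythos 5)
Your reduction to the scalar integrals $\zeta_{i}(t)$, the unit-block estimate via $(2.7)$, and the summation against $\sqrt{\lambda_{i,1}}\gamma_{i,1}^{3/2}$ are all in order, and for each \emph{fixed} $t$ they reproduce the moment bound of Lemma 4.1. The genuine gap is exactly at the step you yourself flag as delicate. Your Borel--Cantelli/stationarity argument controls $C_{\alpha}(B^{H}_{i})\big|_{-k-2}^{-k}$ by an envelope that grows (slowly) in $k$, i.e.\ in the distance of the block from the time origin; but in your series the weight $e^{-(\gamma_{i,1}+\beta)n}$ decays only in $n$, the distance of the block from the \emph{moving} endpoint $t$. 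The block $[t-n-1,t-n]$ sits at distance about $|t|+n$ from the origin, so what your argument actually delivers is $|\zeta_{i}(t)|\leq C(\gamma_{i,1}+\beta)^{-1/2}\rho_{i}(\omega)\,\psi(|t|)$ with $\psi$ slowly increasing, not a $t$-uniform bound: the $n=0$ term carries weight $1$, and the family $\{C_{\alpha}(B^{H}_{i})\big|_{t-1}^{t}:t\leq 0\}$ is a stationary family with unbounded marginals whose supremum over $t\leq 0$ is almost surely infinite (its blocks far in the past exceed any level infinitely often), so no finite $\rho_{i}(\omega)$ independent of $t$ can dominate it and nothing in the sum compensates. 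Consequently the asserted inequality $\sup_{t\leq 0}|\zeta_{i}(t)|\leq C(\gamma_{i,1}+\beta)^{-1/2}\rho_{i}(\omega)$ does not follow; distributional invariance of the noise blocks plus Borel--Cantelli can only yield a tempered (slowly growing in $|t|$) envelope for $\|Z_{1}(t)\|_{3}$, never a uniform one.

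The paper's own proof proceeds by a different mechanism which you would need to reproduce to reach the stated conclusion: it does not use unit blocks, but expands the generalized Stieltjes integral through the Weyl-derivative representation $(4.41)$, splits into the four terms $I_{1}$--$I_{4}$, and estimates them with the pathwise polynomial-growth bound of Lemma 2.6 in \cite{MS}, $|B^{H}_{i}(t)-B^{H}_{i}(s)|\leq 2[(n+1)^{2}t^{2}+k(\omega)]$ for $s\in[(n+1)t,nt]$, together with Lemma 7.4 in \cite{NR}; there the blocks have length $|t|$, so the exponential kernel produces inverse powers of $(\gamma_{i,1}+\beta)|t|$ that are used to absorb the polynomial growth of the fBm, uniformly for $t\leq -1$, with $t\in[-1,0]$ handled by the continuity from Proposition 2.1. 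In other words, the paper's uniformity in $t$ is bought from a pathwise growth estimate on the noise combined with $t$-scaled blocks, not from stationarity of block laws; your proposal replaces this by an argument that cannot close the $n=0$ block, which is the heart of the matter. (Note also that for the way the bound is used later, e.g.\ in $(4.49)$--$(4.53)$, $Z_{j}$ enters only through time integrals against exponentially decaying weights, for which the tempered bound your method does give would be the natural substitute.)
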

\begin{proof}
\begin{eqnarray}
\|Z_{1}(t)\|_{3}^{2}=\sum_{i=1}^{\infty}\lambda_{i,1}\gamma_{i,1}^{3}\lim\limits_{\tau\rightarrow -\infty}|\int^{t}_{\tau}e^{-(\gamma_{i,1}+\beta)(t-s)} dB_{i}^{H}(s)|^{2}.
\end{eqnarray}
By the concept of stochastic integral driven by fBm, we have
\begin{eqnarray}
&&\int^{t}_{\tau}e^{-(\gamma_{i,1}+\beta)(t-s)} dB_{i}^{H}(s)=(-1)^{\alpha}\int_{\tau}^{t}\frac{1}{\Gamma (\alpha)}\Big{(}\frac{e^{-(\gamma_{i,1}+\beta )(t-s)}}{(s-\tau)^{\alpha}}+\alpha\int_{\tau}^{s}\frac{e^{-(\gamma_{i,1}+\beta )(t-s)}- e^{-(\gamma_{i,1}+\beta )(t-u)} }{(s-u)^{\alpha+1}} du\Big{)}\nonumber\\
&&\cdot \frac{(-1)^{1-\alpha}}{\Gamma(\alpha)} \Big{(}\frac{B^{H}_{i}(t)- B^{H}_{i}(s)}{(t-s)^{1-\alpha}}+(1-\alpha)\int_{s}^{t}
\frac{B^{H}_{i}(s)- B^{H}_{i}(v) }{(v-s)^{2-\alpha}}dv     \Big{)}ds.
\end{eqnarray}
Substituting $(4.41)$ into $(4.40)$ yields
\begin{eqnarray}
&&\|Z_{1}(t)\|_{3}^{2} \leq C\sum_{i=1}^{\infty}\lambda_{i,1}\gamma_{i,1}^{3}\lim\limits_{\tau\rightarrow -\infty}
|\int^{t}_{\tau}\frac{e^{-(\gamma_{i,1}+\beta )(t-s)}}{(s-\tau)^{\alpha}} \frac{B^{H}_{i}(t)- B^{H}_{i}(s)}{(t-s)^{1-\alpha}}ds |^{2}\nonumber\\
&& +C\sum_{i=1}^{\infty}\lambda_{i,1}\gamma_{i,1}^{3}\lim\limits_{\tau\rightarrow -\infty}| \int^{t}_{\tau}\frac{B^{H}_{i}(t)- B^{H}_{i}(s)}{(t-s)^{1-\alpha}}\Big{(}\int_{\tau}^{s}\frac{e^{-(\gamma_{i,1}+\beta )(t-s)}- e^{-(\gamma_{i,1}+\beta )(t-u)} }{(s-u)^{\alpha+1}}du\Big{)} ds |^{2}\nonumber\\
&&+C\sum_{i=1}^{\infty}\lambda_{i,1}\gamma_{i,1}^{3}\lim\limits_{\tau\rightarrow -\infty}
|\int^{t}_{\tau}( \int_{\tau}^{s}\frac{e^{-(\gamma_{i,1}+\beta )(t-s)}- e^{-(\gamma_{i,1}+\beta )(t-u)} }{(s-u)^{\alpha+1}} du)
(\int_{s}^{t}
\frac{B^{H}_{i}(s)- B^{H}_{i}(v) }{(v-s)^{2-\alpha}}dv )ds   |^{2}\nonumber\\
&&+C\sum_{i=1}^{\infty}\lambda_{i,1}\gamma_{i,1}^{3}\lim\limits_{\tau\rightarrow -\infty}|\int_{\tau}^{t}   \frac{e^{-(\gamma_{i,1}+\beta )(t-s)}}{(s-\tau)^{\alpha}}( \int_{s}^{t}
\frac{B^{H}_{i}(s)- B^{H}_{i}(v) }{(v-s)^{2-\alpha}}dv)ds|^{2}\nonumber\\
&&:=I_{1}+I_{2}+I_{3}+I_{4}.
\end{eqnarray}
Applying Lemma $2.6$ in $\cite{MS}$, we deduce for fixed $t<0$ and all $s\in (-\infty, t]$
\begin{eqnarray}
&&e^{-(\gamma_{i,1}+\beta)(t-s)}|B^{H}_{i}(t)-B^{H}_{i}(s)|\nonumber\\
&=&e^{-(\gamma_{i,1}+\beta)(t-s)}|B^{H}_{i}(t)-B^{H}_{i}(s)|\sum_{n=1}^{\infty}I_{[(n+1)t\leq s\leq nt]}(s,t)\nonumber\\
&\leq& \sum_{n=1}^{\infty}e^{-(\gamma_{i,1}+\beta)(t-s)}|B^{H}_{i}(t)-B^{H}_{i}(s)|I_{[(n+1)t\leq s\leq nt]}(s,t)\nonumber\\
&\leq& \sum_{n=1}^{\infty}e^{(\gamma_{i,1}+\beta)(n-1)t}2[(n+1)^{2}t^{2}+k(\omega) ]\nonumber\\
&\leq& C\sum_{n=2}^{\infty}\frac{(n+1)^{2}t^{2}+k(\omega) }{[ (\gamma_{i,1}+\beta)(n-1)t ]^{4}}\nonumber\\
&=& \frac{C}{t^{2}(\gamma_{i,1}+\beta)^{4}}(\sum_{n=1}^{\infty}\frac{(n+1)^{2}}{ (n-1)^{4}}+\sum_{n=2}^{\infty}\frac{k(\omega)}{(n-1)^{4}t^{2}} )\nonumber\\
&\leq& \frac{C}{t^{2}(\gamma_{i,1}+\beta)^{4}}+\frac{Ck(\omega)}{t^{4}(\gamma_{i,1}+\beta)^{4}}.
\end{eqnarray}
Since $Z_{1}$ is continuous in $(H^{3}(\mho))^{2}$ with respect to time $t,$ we only need to show the result of this theorem is true when $t\leq -1.$
Therefore, in the following we assume $t \leq -1$. By  $(4.42)$ and  $(4.43)$ we have
\begin{eqnarray*}
I_{1}\leq C\sum_{i=1}^{\infty}\lambda_{i,1}\gamma_{i,1}^{3}\frac{C+Ck^{2}(\omega)}{(\gamma_{i,1}+\beta)^{8}} \frac{1}{t^{8}}\leq (C+Ck^{2}(\omega))\frac{1}{t^{8}}<\infty .
\end{eqnarray*}
To estimate $I_{2},$ we first consider
\begin{eqnarray}
&&|\int^{t}_{\tau}\frac{B^{H}_{i}(t)- B^{H}_{i}(s)}{(t-s)^{1-\alpha}}\Big{(}\int_{\tau}^{s}\frac{e^{-(\gamma_{i,1}+\beta )(t-s)}- e^{-(\gamma_{i,1}+\beta )(t-u)} }{(s-u)^{\alpha+1}}du\Big{)} ds |\nonumber\\
&\leq& |\int^{t}_{\tau}\frac{|B^{H}_{i}(t)- B^{H}_{i}(s)|e^{-(\gamma_{i,1}+\beta )(t-s)}}{(t-s)^{1-\alpha}}\Big{(}\int_{s-1}^{s}\frac{1- e^{-(\gamma_{i,1}+\beta )(s-u)} }{(s-u)^{\alpha+1}}du\Big{)} ds |\nonumber\\
&&+|\int^{t}_{\tau}\frac{|B^{H}_{i}(t)- B^{H}_{i}(s)|e^{-(\gamma_{i,1}+\beta )(t-s)}}{(t-s)^{1-\alpha}}\Big{(}\int_{-\infty}^{s-1}\frac{1- e^{-(\gamma_{i,1}+\beta )(s-u)} }{(s-u)^{\alpha+1}}du\Big{)} ds|\nonumber\\
&\leq& |\int^{t}_{\tau}\frac{|B^{H}_{i}(t)- B^{H}_{i}(s)|e^{-(\gamma_{i,1}+\beta )(t-s)}}{(t-s)^{1-\alpha}}\Big{(}\int_{s-1}^{s}\frac{ (\gamma_{i,1}+\beta  )^{\alpha+\varepsilon} (s-u)^{\alpha+\varepsilon } }{(s-u)^{\alpha+1}}du\Big{)} ds |\nonumber\\
&&+|\int^{t}_{\tau}\frac{|B^{H}_{i}(t)- B^{H}_{i}(s)|e^{-(\gamma_{i,1}+\beta )(t-s)}}{(t-s)^{1-\alpha}}\Big{(}\int_{-\infty}^{s-1}\frac{1}{(s-u)^{\alpha+1}}du\Big{)} ds|\nonumber\\
&=&|\int^{t}_{\tau}\frac{|B^{H}_{i}(t)- B^{H}_{i}(s)|e^{-(\gamma_{i,1}+\beta )(t-s)}}{(t-s)^{1-\alpha}}ds|(\frac{ (\gamma_{i,1}+\beta )^{\alpha+\varepsilon} }{\varepsilon} +\frac{1}{\alpha}),
\end{eqnarray}
where $\varepsilon $ is small positive constant such that $\alpha +\varepsilon<1 $ and we have used the fact
\begin{eqnarray*}
e^{-|x|}-e^{-|y|}\leq C_{r} |x-y|^{r},
\end{eqnarray*}
for $ x,y\in \mathbb{R}, r\in [0,1]$ in the seconde inequality.
Note that from $(4.43)$ and $(4.44)$ it follows that
\begin{eqnarray*}
&&|\int^{t}_{\tau}\frac{B^{H}_{i}(t)- B^{H}_{i}(s)}{(t-s)^{1-\alpha}}\Big{(}\int_{\tau}^{s}\frac{e^{-(\gamma_{i,1}+\beta )(t-s)}- e^{-(\gamma_{i,1}+\beta )(t-u)} }{(s-u)^{\alpha+1}}du\Big{)} ds |\nonumber\\
&\leq &|\int^{t}_{t-1}\frac{|B^{H}_{i}(t)- B^{H}_{i}(s)|e^{-(\gamma_{i,1}+\beta )(t-s)}}{(t-s)^{1-\alpha}}ds|(\frac{ (\gamma_{i,1}+\beta )^{\alpha+\varepsilon} }{\varepsilon} +\frac{1}{\alpha})\nonumber\\
&&+|\int^{t-1}_{-\infty}\frac{|B^{H}_{i}(t)- B^{H}_{i}(s)|e^{-(\gamma_{i,1}+\beta )(t-s)}}{(t-s)^{1-\alpha}}ds|(\frac{ (\gamma_{i,1}+\beta )^{\alpha+\varepsilon} }{\varepsilon} +\frac{1}{\alpha})\nonumber\\
&\leq &\frac{C+Ck(\omega)}{\alpha(\gamma_{i,1}+\beta)^{4}}(\frac{ (\gamma_{i,1}+\beta )^{\alpha+\varepsilon} }{\varepsilon} +\frac{1}{\alpha})\nonumber\\
&&+\frac{C+Ck(\omega)}{(\gamma_{i,1}+\beta)^{4}}(\frac{ (\gamma_{i,1}+\beta )^{\alpha+\varepsilon} }{\varepsilon} +\frac{1}{\alpha})
\int^{t-1}_{-\infty}\frac{[\frac{(\gamma_{i,1}+\beta)}{2}(t-s)]^{-2\alpha}}{(t-s)^{1-\alpha}}ds\nonumber\\
&\leq &\frac{C+Ck(\omega)}{(\gamma_{i,1}+\beta)^{4+\alpha-\varepsilon}}\nonumber
\end{eqnarray*}
which by $(4.42)$ yields that
\begin{eqnarray*}
I_{2}\leq C\sum_{i=1}^{\infty}\lambda_{i,1}\gamma_{i,1}^{3}\frac{C+Ck^{2}(\omega)}{(\gamma_{i,1}+\beta)^{8+2\alpha-2\varepsilon}}<\infty.
\end{eqnarray*}
In order to estimate $I_{3},$ we consider
\begin{eqnarray}
&&|\int^{t}_{\tau}( \int_{\tau}^{s}\frac{e^{-(\gamma_{i,1}+\beta )(t-s)}- e^{-(\gamma_{i,1}+\beta )(t-u)} }{(s-u)^{\alpha+1}} du)(\int_{s}^{t}
\frac{B^{H}_{i}(s)- B^{H}_{i}(v) }{(v-s)^{2-\alpha}}dv )ds   |\nonumber\\
&\leq& |\int^{t}_{\tau}( \int_{\tau}^{s}\frac{e^{-(\gamma_{i,1}+\beta )(t-s)}- e^{-(\gamma_{i,1}+\beta )(t-u)} }{(s-u)^{\alpha+1}} du)(\int_{s+1}^{t}
\frac{|B^{H}_{i}(s)- B^{H}_{i}(v) |}{(v-s)^{2-\alpha}}dv )ds |\nonumber\\
&&+ |\int^{t}_{\tau}( \int_{\tau}^{s}\frac{e^{-(\gamma_{i,1}+\beta )(t-s)}- e^{-(\gamma_{i,1}+\beta )(t-u)} }{(s-u)^{\alpha+1}} du)(\int_{s}^{s+1}
\frac{|B^{H}_{i}(s)- B^{H}_{i}(v) |}{(v-s)^{2-\alpha}}dv )ds|\nonumber\\
&:=&J_{1}+J_{2}.
\end{eqnarray}
By Lemma $2.6$ in $\cite{MS}$ we can derive the following estimate for $J_{1}$
\begin{eqnarray*}
J_{1}&\leq& \int^{t}_{\tau}( \int_{\tau}^{s}\frac{e^{-(\gamma_{i,1}+\beta )(t-s)}- e^{-(\gamma_{i,1}+\beta )(t-u)} }{(s-u)^{\alpha+1}} du)[2(|s|+1)^{2}+k(\omega)](t-s)ds\\
&\leq& \int^{t}_{\tau}e^{-(\gamma_{i,1}+\beta )(t-s)}s^{2}(t-s)\Big{(}\int_{\tau}^{s}\frac{1-e^{-(\gamma_{i,1}+\beta )(s-u)}}{(s-u)^{\alpha+1}}du\Big{)}ds\\
&\leq& \int^{t}_{\tau}e^{-(\gamma_{i,1}+\beta )(t-s)}s^{2}(t-s)\Big{(}\int_{s-1}^{s}\frac{1-e^{-(\gamma_{i,1}+\beta )(s-u)}}{(s-u)^{\alpha+1}}du\Big{)}ds\\
&&+\int^{t}_{\tau}e^{-(\gamma_{i,1}+\beta )(t-s)}s^{2}(t-s)\Big{(}\int_{-\infty}^{s-1}\frac{1-e^{-(\gamma_{i,1}+\beta )(s-u)}}{(s-u)^{\alpha+1}}du\Big{)}ds \\
&\leq& \int^{t}_{\tau}e^{-(\gamma_{i,1}+\beta )(t-s)}s^{2}(t-s)\Big{(}\int_{s-1}^{s}\frac{(\gamma_{i,1}+\beta )^{\alpha+\varepsilon}(s-u)^{\alpha+\varepsilon}}{(s-u)^{\alpha+1}}du\Big{)}ds \\
&&+\int^{t}_{\tau}e^{-(\gamma_{i,1}+\beta )(t-s)}s^{2}(t-s)\Big{(}\int_{-\infty}^{s-1}\frac{1}{(s-u)^{\alpha+1}}du\Big{)}ds .
\end{eqnarray*}
After some elementary calculations, we arrive at
\begin{eqnarray*}
J_{1}&\leq&C (\gamma_{i,1}+\beta )^{\alpha+\varepsilon} \int^{t}_{\tau}e^{-(\gamma_{i,1}+\beta )(t-s)}s^{2}(t-s)ds\\
&\leq&C (\gamma_{i,1}+\beta )^{\alpha+\varepsilon} \sum_{n=1}^{\infty}\int^{nt}_{(n+1)t}e^{-(\gamma_{i,1}+\beta )(t-s)}s^{2}(t-s)ds\\
&\leq&C (\gamma_{i,1}+\beta )^{\alpha+\varepsilon} \sum_{n=1}^{\infty}\int^{nt}_{(n+1)t}e^{(\gamma_{i,1}+\beta )(n-1)t}(nt)^{2}[t-(n+1)t]ds\\
&\leq&C (\gamma_{i,1}+\beta )^{\alpha+\varepsilon} \sum_{n=1}^{\infty}\frac{n^{3}t^{4}}{(\gamma_{i,1}+\beta )^{5}(n-1)^{5}|t|^{5}}\\
&\leq&\frac{C}{(\gamma_{i,1}+\beta )^{5-\alpha-\varepsilon} }.
\end{eqnarray*}
Choosing a positive constant $\varepsilon$ such that $\alpha+ H-\varepsilon >1$ and using Lemma $7.4$ in $\cite{NR}$ we obtain
\begin{eqnarray*}
J_{2}&\leq&C  |\int^{t}_{\tau}( \int_{\tau}^{s}\frac{e^{-(\gamma_{i,1}+\beta )(t-s)}- e^{-(\gamma_{i,1}+\beta )(t-u)} }{(s-u)^{\alpha+1}} du)(\int_{s}^{s+1}
\frac{(v-s)^{H-\varepsilon} }{(v-s)^{2-\alpha}}dv )ds|\\
&\leq&\frac{C}{\alpha+H-\varepsilon-1}\int^{t}_{\tau}e^{-(\gamma_{i,1}+\beta )(t-s)}( \int_{\tau}^{s}\frac{1- e^{-(\gamma_{i,1}+\beta )(s-u)} }{(s-u)^{\alpha+1}} du)ds\\
&\leq&C\int^{t}_{\tau}e^{-(\gamma_{i,1}+\beta )(t-s)}\Big{(} \int_{s-1}^{s}\frac{(\gamma_{i,1}+\beta )^{\alpha+\varepsilon}(s-u)^{\alpha+\varepsilon }}{(s-u)^{\alpha+1}  }du  \Big{)}ds\\
&&+C\int^{t}_{\tau}e^{-(\gamma_{i,1}+\beta )(t-s)}\Big{(} \int_{-\infty}^{s-1}\frac{1}{(s-u)^{\alpha+1}  }du  \Big{)}ds\\
&\leq&C(\gamma_{i,1}+\beta )^{\alpha+\varepsilon}\int^{t}_{\tau}e^{-(\gamma_{i,1}+\beta )(t-s)}ds\\
&\leq&C(\gamma_{i,1}+\beta )^{\alpha+\varepsilon-1}.
\end{eqnarray*}
Therefore, by $(4.45)$ and estimates of $J_{1}$ and $J_{2}$ we conclude
\begin{eqnarray*}
I_{3}\leq \sum_{i=1}^{\infty}\lambda_{i,1}\gamma_{i}^{3}(\frac{C}{(\gamma_{i,1}+\beta )^{5-\alpha-\varepsilon} }+C(\gamma_{i,1}+\beta )^{\alpha+\varepsilon-1})^{2}< C\sum_{i=1}^{\infty}\lambda_{i,1}\gamma_{i,1}^{2}< \infty.
\end{eqnarray*}
To estimate $I_{4},$ by Lemma $7.4$ in $\cite{NR}$ we note that
\begin{eqnarray}
&&|\int_{\tau}^{t}   \frac{e^{-(\gamma_{i,1}+\beta )(t-s)}}{(s-\tau)^{\alpha}}( \int_{s}^{t}\frac{B^{H}_{i}(s)- B^{H}_{i}(v) }{(v-s)^{2-\alpha}}dv)ds|\nonumber\\
&\leq &|\int_{\tau}^{t}   \frac{e^{-(\gamma_{i,1}+\beta )(t-s)}}{(s-\tau)^{\alpha}}( \int_{s+1}^{t}\frac{|B^{H}_{i}(s)- B^{H}_{i}(v)| }{(v-s)^{2-\alpha}}dv)ds|\nonumber\\
&&+|\int_{\tau}^{t}   \frac{e^{-(\gamma_{i,1}+\beta )(t-s)}}{(s-\tau)^{\alpha}}( \int_{s}^{s+1}\frac{|B^{H}_{i}(s)- B^{H}_{i}(v) | }{(v-s)^{2-\alpha}}dv)ds|\nonumber\\
&\leq &C|\int_{\tau}^{t}   \frac{e^{-(\gamma_{i,1}+\beta )(t-s)/2}}{(s-\tau)^{\alpha}}[\frac{(\gamma_{i,1}+\beta )(t-s)}{2}]^{\alpha-1}
( \int_{s+1}^{t}2s^{2}dv)ds|\nonumber\\
&&+C|\int_{\tau}^{t}   \frac{e^{-(\gamma_{i,1}+\beta )(t-s)/2}}{(s-\tau)^{\alpha}}[\frac{(\gamma_{i,1}+\beta )(t-s)}{2}]^{\alpha-1}
( \int_{s}^{s+1}   \frac{|s-v|^{H-\varepsilon}}{(v-s)^{2-\alpha}}dv)ds|\nonumber\\
&\leq &C\Big{(}\frac{\gamma_{i,1}+\beta}{2}\Big{)}^{\alpha-1} \int_{\tau}^{t}(s-\tau)^{-\alpha} (t-s)^{\alpha-1}e^{-(\gamma_{i,1}+\beta )(t-s)/2}
s^{2}(t-s)ds\nonumber\\
&&+C\Big{(}\frac{\gamma_{i,1}+\beta}{2}\Big{)}^{\alpha-1}\frac{1}{\alpha+H-\varepsilon-1} \int_{\tau}^{t}(s-\tau)^{-\alpha} (t-s)^{\alpha-1}e^{-(\gamma_{i,1}+\beta )(t-s)/2}ds.
\end{eqnarray}
Since
\begin{eqnarray*}
&&e^{-(\gamma_{i,1}+\beta )(t-s)/2}s^{2}(t-s)\\
&=& \sum_{n=1}^{\infty}e^{-(\gamma_{i,1}+\beta )(t-s)/2}s^{2}(t-s)I_{((n+1)t\leq s\leq nt ]}(s)\\
&\leq& \sum_{n=1}^{\infty}e^{-(\gamma_{i,1}+\beta )(n-1)|t|/2}(n+1)^{2}t^{2}|nt|\\
&\leq& C\sum_{n=1}^{\infty} \frac{n^{3}|t|^{3}}{(\gamma_{i,1}+\beta )^{5}(n-1)^{5}|t|^{5} }\leq \frac{C}{(\gamma_{i,1}+\beta )^{5} },
\end{eqnarray*}
which by $( 4.46)$ yields
\begin{eqnarray*}
|\int_{\tau}^{t}   \frac{e^{-(\gamma_{i,1}+\beta )(t-s)}}{(s-\tau)^{\alpha}}( \int_{s}^{t}\frac{B^{H}_{i}(s)- B^{H}_{i}(v) }{(v-s)^{2-\alpha}}dv)ds|\leq\frac{C}{(\gamma_{i,1}+\beta )^{1-\alpha}}\frac{\pi}{\sin \alpha\pi}.
\end{eqnarray*}
Subsequently, it follows that
\begin{eqnarray}
I_{4}\leq  C\sum_{i=1}^{\infty}\lambda_{i,1}\gamma_{i,1}^{1+2\alpha}<C\sum_{i=1}^{\infty}\lambda_{i,1}\gamma_{i,1}^{2}<\infty.
\end{eqnarray}
Combining the estimates of $I_{1}-I_{4}$ and Proposition 2.1, we complete the proof.
\hspace{\fill}$\square$
\end{proof}
Analogously,  considering the growth properties of $(Z_{2}(t))_{t\in \mathbb{R}}$ we have Lemma 4.4 whose proof is similar to Lemma 4.3 and is omitted.
\begin{lemma}
Under the condition of Proposition $2.1,$ there exists a random variable $r(\omega)$ taking finite values, i.e., $ r(\omega)<\infty, $ for all $\omega\in \Omega,$ such that
$$\|Z_{2}(t)\|_{3} < r(\omega),$$
for all $t\in (-\infty, 0].$
\end{lemma}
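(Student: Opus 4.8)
The plan is to follow the argument of Lemma 4.3 essentially verbatim, replacing the index $1$ by $2$ throughout and invoking condition $(2.15)$ together with Proposition $2.2$ (which governs $Z_2$) in place of condition $(2.12)$ and Proposition $2.1$. The only structural change from the previous lemma is that $Z_2$ is scalar valued in $H^3(\mho)$ whereas $Z_1$ was vector valued in $(H^3(\mho))^2$; correspondingly the eigenbasis $(e_{i,2})_i$ of $H_2$ and the eigenvalues $(\gamma_{i,2})_i$ of $A_2$ replace those of $A_1$. The Robin-type boundary contribution carried by $A_2$ enters only through the spectral data and affects none of the estimates below.

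First I would expand $\|Z_2(t)\|_3^2$ in the eigenbasis of $A_2$, exactly as in $(4.40)$:
\begin{eqnarray*}
\|Z_{2}(t)\|_{3}^{2}=\sum_{i=1}^{\infty}\lambda_{i,2}\gamma_{i,2}^{3}\lim\limits_{\tau\rightarrow -\infty}\Big|\int^{t}_{\tau}e^{-(\gamma_{i,2}+\beta)(t-s)}\,dB_{i}^{H}(s)\Big|^{2},
\end{eqnarray*}
and then rewrite each scalar integral via the pathwise Weyl-derivative representation $(4.41)$. This splits the right-hand side into four spectral sums $I_1+I_2+I_3+I_4$ of precisely the same form encountered in the proof of Lemma 4.3.

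Next I would estimate the four terms one by one, the key analytic inputs being identical to those used before. Lemma $2.6$ in $\cite{MS}$ supplies the pathwise growth bound on $e^{-(\gamma_{i,2}+\beta)(t-s)}|B^{H}_{i}(t)-B^{H}_{i}(s)|$ in terms of the finite random variable $k(\omega)$; the elementary inequality $e^{-|x|}-e^{-|y|}\le C_r|x-y|^r$ for $r\in[0,1]$ controls the increments of the exponential kernel; and Lemma $7.4$ in $\cite{NR}$ bounds the $(H-\varepsilon)$-H\"older increments of $B^{H}_{i}$ near the diagonal. Carrying these through as in $(4.42)$–$(4.47)$ produces, for all $t\le 0$ and uniformly in $\tau$, bounds of the schematic shape $I_1\le C(1+k^2(\omega))\,t^{-8}$ and $I_2,I_3,I_4\le C(1+k^2(\omega))\sum_i\lambda_{i,2}\gamma_{i,2}^{2}$.

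Finally I would sum over $i$. The resulting spectral series converge by virtue of condition $(2.15)$, exactly as the analogous series converged under $(2.12)$ in Lemma 4.3, so the four terms combine into a finite quantity depending on $\omega$ only through $k(\omega)$. Using the continuity of $Z_2$ in $H^3(\mho)$ from Proposition $2.2$ to restrict attention to $t\le -1$, and then taking the supremum over $t\in(-\infty,0]$, yields a finite random variable $r(\omega)$ with $\|Z_2(t)\|_3<r(\omega)$ for all such $t$. The main obstacle is, as in Lemma 4.3, purely pathwise in nature: since fBm is not a semimartingale one cannot appeal to the It\^o isometry, so each bound must be obtained deterministically for a fixed realization and then shown to be summable in $i$; the delicate point is extracting the temporal decay (the factors $t^{-2},t^{-4}$ coming from $k(\omega)$) simultaneously with the spectral decay in $\gamma_{i,2}$, uniformly for all $t\in(-\infty,0]$.
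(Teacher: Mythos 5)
Your proposal is correct and takes essentially the same route as the paper: the paper itself omits the proof of this lemma, stating only that it is analogous to Lemma 4.3, and your plan — expanding $\|Z_{2}(t)\|_{3}^{2}$ in the eigenbasis of $A_{2}$, splitting into the four terms $I_{1}$–$I_{4}$, applying Lemma $2.6$ of the Maslowski–Schmalfuss paper and Lemma $7.4$ of Nualart–Rascanu pathwise, and summing the spectral series under condition $(2.15)$ — is exactly that analogue. Your replacement of Proposition $2.1$/condition $(2.12)$ by Proposition $2.2$/condition $(2.15)$ is the correct reading of the (slightly misstated) hypothesis.
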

\noindent ${4.3.\ \mathbf{Proof }\ \mathbf{Of }\ \mathbf{Theorem}\ \mathbf{3.1}. } $
Under conditions of Theorem $ 3.2,$ for $j\in \{1,2\}$, $(W^{H}_{j}(.,t))_{t\in \mathbb{R}^{+}},$ is a fractional Wiener process with values in $\mathcal{V}_{j}$. As usual in this context, we extend the fractional Wiener process $(W^{H}_{j}(.,t))_{t\in \mathbb{R}^{+}}$ to all $\mathbb{R}$ by
setting
\begin{eqnarray*}
W^{H}_{j}(.,t)= V^{H}_{j}(.,-t),\ \ t\leq 0,
\end{eqnarray*}
 where $(V^{H}_{j}(.,t))_{t\in \mathbb{R}^{+}},$  is another fractional Wiener process with the same covariance operator as $(W^{H}_{j}(.,t))_{t\in \mathbb{R}^{+}}$.
  We will consider a canonical version  $(\omega_{j}(.,t))_{t\in \mathbb{R}}$ of this process given by the probability space
$(C_{0}(\mathbb{R},\mathcal{V}_{i}), \mathcal{B}(C_{0}(\mathbb{R},\mathcal{V}_{i})),\mathbb{P}_{i},\vartheta) $ where $\mathbb{P}_{i}$ is the Gaussian-measure generated by $(W^{H}_{j}(.,t))_{t\in \mathbb{\mathbb{R}}}$. So, if we set $(\omega(.,t))_{t\in \mathbb{R}}:= (\omega_{1}(.,t), \omega_{2}(.,))_{t\in \mathbb{R}}$, the process $(\omega(.,t))_{t\in \mathbb{R}} $ is given by the probability space
$(C_{0}(\mathbb{R},\mathcal{V}), \mathcal{B}(C_{0}(\mathbb{R},\mathcal{V})),\tilde{\mathbb{P}},\vartheta), $ where $\tilde{ \mathbb{P}}=\mathbb{P}_{1}\times\mathbb{P}_{2} .$ Now we may define the stochastic dynamical system $(S(t,s;\omega))_{t\geq s,\omega\in \tilde{\Omega}}$ by
\begin{eqnarray}
S(t,s;\omega)(\upsilon_{s},T_{s})=(u(t,\omega_{1})+Z_{1}(t,\omega_{1}), \theta(t, \omega_{2})+Z_{2}(t,\omega_{2}))
\end{eqnarray}
where $(\upsilon, T)$ is the strong solution to $(1.1)-(1.5)$ with $(\upsilon_{s}, T_{s})=(u_{s}+Z_{1}(s,\omega_{1}), \theta_{s}+Z_{2}(s,\omega_{2}  )$ and $(u, \theta)$ is the strong solution to $(2.17)-(2.22)$ or equivalently it is the strong solution to $(5.86)-(5.91).$
It can be checked that assumptions (i)-(iv) and (a)-(c) are satisfied with $X=\mathcal{V}$.
\par
Further more, by $\cite{MS} ,$ we have the result:
$(C_{0}(\mathbb{R}, \mathcal{V}), \mathcal{B}(C_{0}(\mathbb{R}, \mathcal{V} )), \tilde{\mathbb{P}}, \vartheta )$ is an ergodic metric dynamical system.
Properties $\mathrm{(i)}, \mathrm{(ii)}, \mathrm{(iv)}$ of the solution operator $(S(t,s;\omega))_{t\geq s,\omega\in \tilde{\Omega}} $ follows by Theorem $3.2$ and property $\mathrm{(iii)}$ of the solution operator also holds regarding the fact that the proof of the global existence of strong solution to $(1.1)-(1.5)$ rest upon Faedo-Galerkin method.
\par
Next, we will prove the existence of the random attractor.
\par
For $j=1,2$ and $t\in \mathbb{R}$, we have
\begin{eqnarray*}
Z_{j}(t)&=&\int_{-\infty}^{t}e^{-(t-s)(A_{j}+\beta)}dW_{j}^{H}(s)\\
&=&\sum_{i=1}^{\infty}\lambda_{i,j}^{\frac{1}{2}}e_{i,j}\int_{-\infty}^{t}e^{-(t-s)(\gamma_{i,j}+\beta)}d\beta_{i}^{H}(s).
\end{eqnarray*}
Since $e^{-(t-s)(\gamma_{i,j}+\beta)}$ is infinitely many times continuously differentiable
with respect to $s$ and $\beta_{i}^{H}(s)$ has $\alpha$--H$\mathrm{\ddot{o}}$lder continuous paths for all $\alpha\in (0, H)$ with $H>\frac{1}{2},$ by Theorem$ 4.2.1 $ in $\cite {Z}$, we know the stochastic integral on the right hand side of the above equality is equivalent to  Riemann--Stieltjes  integral. Therefore,
\begin{eqnarray*}
Z_{j}(t)&=&\sum_{i=1}^{\infty}\lambda_{i,j}^{\frac{1}{2}}e_{i,j}\lim\limits_{\tau\rightarrow -\infty}\int_{\tau-t}^{0}e^{s(\gamma_{i,j}+\beta)}d\theta_{t}\beta_{i}^{H}(s).
\end{eqnarray*}
By integration by parts and $ \theta_{t}\beta_{i}^{H}(s)$ has polynomial growth with respect to $s$, we have
\begin{eqnarray*}
Z_{j}(t)&=&-\sum_{i=1}^{\infty}\lambda_{i,j}^{\frac{1}{2}}e_{i,j}\lim\limits_{\tau\rightarrow -\infty}\int_{\tau-t}^{0}( \gamma_{i,j}+\beta )e^{s(\gamma_{i,j}+\beta)}\theta_{t}\beta_{i}^{H}(s)d(s)\\
&=&-\int_{-\infty}^{0}(A+\beta)e^{s(A+\beta)}\theta_{t}W_{j}^{H}(s)ds.
\end{eqnarray*}
Since the integral is convergent in $H^{1}(\mho)$, by Fubini theorem, we know $Z_{j}(t)$ is adapted with respect to $\mathcal{F}_{t}:=\sigma( W_{j}^{H}(s), s\leq t, j=1,2).$ Therefore,
for $s<0,$ applying the properties of ergodic metric dynamical system we have
\begin{eqnarray*}
\lim\limits_{s\rightarrow -\infty}\frac{1}{-s}\int_{s}^{0}(\|Z_{1}\|_{2}^{2}+\|Z_{1}\|_{2}^{4}+\|Z_{2}\|_{3}^{2} )ds
=E(\|Z_{1}(0)\|_{2}^{2}+\|Z_{1}(0)\|_{2}^{4}+\|Z_{2}(0)\|_{3}^{2} ).
\end{eqnarray*}
From Lemma $4.1$ and Lemma $4.2,$ there exists a $\beta$ which is big enough such that
\begin{eqnarray*}
E(\|Z_{1}(0)\|_{2}^{2}+\|Z_{1}(0)\|_{2}^{4}+\|Z_{2}(0)\|_{3}^{2} )<\frac{\gamma_{1}}{2}
\end{eqnarray*}
which implies that there exists a random variable $\tau(\omega)$ such that for $s<\tau(\omega)$ we have
\begin{eqnarray}
\int_{s}^{0}(-\gamma_{1}+\|Z_{1}\|_{2}^{2}+\|Z_{1}\|_{2}^{4}+\|Z_{2}\|_{3}^{2} )ds \leq \frac{\gamma_{1}}{2}s.
\end{eqnarray}
Due to $(5.102)$,
\begin{eqnarray}
|u(-4)|_{2}^{2}+|\theta(-4)|_{2}^{2}&\leq& (|u(t_{0})|^{2}_{2}+|\theta(t_{0})|^{2}_{2})
e^{C\int_{t_{0}}^{-4}(-\gamma_{1}+\|Z_{1}\|_{2}^{2}+\|Z_{1}\|_{2}^{4}+\|Z_{2}\|_{3}^{2})ds }\nonumber\\
&&+\int_{t_{0}}^{-4} e^{C\int_{s}^{-4}(-\gamma_{1}+\|Z_{1}\|_{2}^{2}+\|Z_{1}\|_{2}^{4}+\|Z_{2}\|_{3}^{2})ds}  (\|Z_{1}\|^{2}_{1}+|Q|^{2}_{2})ds.
\end{eqnarray}
In the following, we denote by $(u(t,\omega;t_{0},u_{0}), \theta(t,\omega;t_{0},\theta_{0})) $ the solution to $(5.86)-(5.91)$ with $(u(t_{0})=u_{0}, \theta(t_{0})=\theta_{0}).$
Then, by$(4.49)$ and $(4.50),$ there exists a random variable $c_{1}(\omega)>0,$ depending only on $\gamma_{1}, Z_{1}$ and $Z_{2},$ such that for arbitrary $\rho >0$ there exists $t(\omega)\leq -4$ such that the following
holds $P$-a.s.. For all $t_{0}\leq t(\omega)$ and $(u_{0},\theta_{0})\in \mathcal{H}$ with $|u_{0}|_{2}+ |\theta_{0}|_{2} \leq \rho,$ the solution
$(u(t,\omega;t_{0},u_{0}), \theta(t,\omega;t_{0},\theta_{0})) $ over $[t_{0}, \infty),$ satisfies
\begin{eqnarray}
&&|u(-4,\omega; t_{0}, u_{0} )|_{2}^{2}+ |\theta(-4,\omega; t_{0}, \theta_{0} )|_{2}^{2}\leq c_{1}(\omega).
\end{eqnarray}
Using $(5.102)$ again, for $t\in [-4,0]$ we have
\begin{eqnarray}
|u(t)|_{2}^{2}+|\theta(t)|_{2}^{2}&\leq& (|u(-4)|^{2}_{2}+|\theta(-4)|^{2}_{2})
e^{C\int_{-4}^{t}(-\gamma_{1}+\|Z_{1}\|_{2}^{2}+\|Z_{1}\|_{2}^{4}+\|Z_{2}\|_{3}^{2})ds }\nonumber\\
&&+\int_{-4}^{t} e^{C\int_{s}^{-4}(-\gamma_{1}+\|Z_{1}\|_{2}^{2}+\|Z_{1}\|_{2}^{4}+\|Z_{2}\|_{3}^{2})ds}  (\|Z_{1}\|^{2}_{1}+|Q|^{2}_{2})ds.
\end{eqnarray}
Moreover, integrating $(5.101)$ over $[-4,0]$ we obtain
\begin{eqnarray}
\int_{-4}^{0}(\|u\|_{1}^{2}+\|\theta\|_{1}^{2})ds&\leq& |u(-4)|^{2}_{2}+|\theta(-4)|^{2}_{2}+C\int_{-4}^{0}(|Q|_{2}^{2}+\|Z_{1}\|_{1}^{2})ds\nonumber\\
&&+C\int_{-4}^{0}(|u|_{2}^{2}+|\theta|_{2}^{2})(\|Z_{1}\|_{2}^{2}+\|Z_{1}\|_{2}^{4}+\|Z_{2}\|_{3}^{2})ds.\nonumber\\
\end{eqnarray}
Therefore, from $(4.51)-(4.53),$ we conclude that there exists two random variables $r_{1}(\omega)$ and $c_{1}(\omega)$, depending only on
$\gamma_{1}, Z_{1}$ and $Z_{2},$ such that for arbitrary $\rho >0$ there exists $t(\omega)\leq -4$ such that the following
holds $P$-a.s.. For all $t_{0}\leq t(\omega)$ and $(u_{0},\theta_{0})\in \mathcal{H}$ with $|u_{0}|_{2}+ |\theta_{0}|_{2} \leq \rho,$ the solution
$(u(t,\omega;t_{0},u_{0}), \theta(t,\omega;t_{0},\theta_{0})) $ over $[t_{0}, \infty),$ satisfies
\begin{eqnarray}
|u(t,\omega; t_{0}, u_{0} )|_{2}^{2}+ |\theta(t,\omega; t_{0}, \theta_{0} )|_{2}^{2}\leq r_{1}(\omega)\ \ \mathrm{for}\ \mathrm{all}\ t\in [-4,0]
\end{eqnarray}
and
\begin{eqnarray}
\int_{-4}^{0}(\|u(s,\omega; t_{0}, u_{0} )\|_{1}^{2}+\|\theta(s,\omega; t_{0}, \theta_{0} )\|_{1}^{2})ds \leq c_{2}(\omega).
\end{eqnarray}
For $t<-3,$ by $(5.109)$ we have
\begin{eqnarray*}
&&|\theta(-3, \omega; t_{0}, \theta_{0} )|^{2}_{4}\nonumber\\
&\leq& |\theta(t, \omega; t_{0}, \theta_{0} )|^{2}_{4}e^{-C(-3-t)}\nonumber\\
&&+C\int_{t}^{-3}e^{-C(-3-s)}(|Q|_{2}^{2}+ \|Z_{2}(s)\|_{3}^{2}+\|Z_{1}(s)\|_{1}^{2}+ \|Z_{2}(s)\|_{3}^{2}\|u(s,\omega; t_{0}, u_{0})\|_{1}^{2})ds.
\end{eqnarray*}
Integrating in $t$ over the interval $[-4, -3]$ yields
\begin{eqnarray}
&&|\theta(-3, \omega; t_{0}, \theta_{0})|^{2}_{4}\nonumber\\
&\leq& \int_{-4}^{-3} |\theta(t, \omega; t_{0}, \theta_{0})|^{2}_{4}e^{-C(-3-t)}dt\nonumber\\
&&+C\int_{-4}^{-3}\int_{t}^{-3}e^{-C(-3-s)}(|Q|_{2}^{2}+ \|Z_{2}(s)\|_{3}^{2}+\|Z_{1}(s)\|_{1}^{2}+ \|Z_{2}(s)\|_{3}^{2}\|u(s, \omega; t_{0}, u_{0} )\|_{1}^{2})dsdt\nonumber\\
&\leq& C\int_{-4}^{-3}( \|\theta(t, \omega; t_{0}, \theta_{0})\|^{2}_{1}+\|u(t, \omega; t_{0}, u_{0} )\|_{1}^{2}) dt\nonumber\\
&&+C\int_{-4}^{-3}e^{-C(-3-s)}(|Q|_{2}^{2}+ \|Z_{2}(s)\|_{3}^{2}+\|Z_{1}(s)\|_{1}^{2})dt,
\end{eqnarray}
where the seconde inequality follows by the boundedness of $e^{-C(-3-s)}\|Z_{2}(s)\|_{3}^{2} $ for $s\leq-3.$

By $(4.53)$ and $(4.56),$ there exists random variable $c_{3}(\omega)$, depending only on
$\gamma_{1}, Z_{1}$ and $Z_{2},$ such that for arbitrary $\rho >0$ there exists $t(\omega)\leq -3$ such that the following
holds $P$-a.s.. For all $t_{0}\leq t(\omega)$ and $\theta_{0}\in L^{4}(\mho)$ with $|\theta_{0}|_{4} \leq \rho,$
$ \theta(t,\omega;t_{0},\theta_{0}) $ satisfies
\begin{eqnarray}
|\theta(-3,\omega;t_{0},\theta_{0})|_{4}^{2}\leq c_{3}(\omega).
\end{eqnarray}
Repeating the argument as $(4.54),$ by $(4.57)$ we have that
 there exists random variable $c_{4}(\omega)$, depending only on
$\gamma_{1}, Z_{1}$ and $Z_{2},$ such that for arbitrary $\rho >0$ there exists $t(\omega)\leq -3$ such that the following
holds $P$-a.s.. For all $t_{0}\leq t(\omega)$ and $\theta_{0}\in L^{4}(\mho)$ with $|\theta_{0}|_{4} \leq \rho,$
$ \theta(t,\omega;t_{0},\theta_{0}) $ satisfies
\begin{eqnarray}
|\theta(t,\omega;t_{0},\theta_{0})|_{4}^{2}\leq c_{4}(\omega),\ \ \mathrm{for}\ \mathrm{all}\ t\in [-3,0].
\end{eqnarray}
By $(5.118),$ taking a similar argument as $(4.58)$ we deduce that there exists random variable $c_{5}(\omega)$, depending only on
$\gamma_{1}, Z_{1}$ and $Z_{2},$ such that for arbitrary $\rho >0$ there exists $t(\omega)\leq -3$ such that the following
holds $P$-a.s.. For all $t_{0}\leq t(\omega)$ and $\tilde{u}_{0}\in (L^{4}(\mho))^{2}$ with $|\tilde{u}_{0}|_{4} \leq \rho,$
$\tilde{u}(t,\omega;t_{0},\theta_{0}) $ satisfies
\begin{eqnarray}
|\tilde{u}(t,\omega;t_{0},\tilde{u}_{0})|_{4}^{2}\leq c_{5}(\omega),\ \ \mathrm{for}\ \mathrm{all}\ t\in [-3,0].
\end{eqnarray}
By $(5.117),$ making an analogous argument as $(4.55)$ we infer that
that there exists random variable $c_{6}(\omega)$, depending only on
$\gamma_{1}, Z_{1}$ and $Z_{2},$ such that for $\rho >0$ there exists $t(\omega)\leq -3$ such that the following
holds $P$-a.s.. For all $t_{0}\leq t(\omega)$ and $\tilde{u}_{0}\in (L^{4}(\mho))^{2}$ with $|\tilde{u}_{0}|_{4} \leq \rho,$
$\tilde{u}(t,\omega;t_{0},u_{0}) $ satisfies
\begin{eqnarray}
\int_{-3}^{0} ||\tilde{u}(s,\omega; t_{0}, \tilde{u}_{0} )| |\nabla \tilde{u}(s,\omega; t_{0}, \tilde{u}_{0} ) |  |_{2}^{2}ds \leq c_{6}(\omega).
\end{eqnarray}
By $(5.123)$ and $(4.59)-(4.60)$, proceeding as $(4.59)$ we find that there exists random variable $c_{7}(\omega)$, depending only on
$\gamma_{1}, Z_{1}$ and $Z_{2},$ such that for arbitrary $\rho >0$ there exists $t(\omega)\leq -2$ such that the following
holds $P$-a.s.. For all $t_{0}\leq t(\omega)$ and $\nabla \bar{u}_{0}\in (L^{2}(\mho))^{2}$ with $|\nabla \bar{u}_{0}|_{2} \leq \rho,$
$\bar{u}(t,\omega;t_{0},\bar{u}_{0}) $ satisfies
\begin{eqnarray}
 |\nabla\bar{u}(t,\omega; t_{0}, \bar{u}_{0} )|_{2}^{2}\leq c_{7}(\omega), \ \ \mathrm{for}\ \mathrm{all}\ t\in [-2,0].
\end{eqnarray}
In view of $(5.133), (4.55)$ and $(4.59)-(4.61),$ following the steps in $(4.59 )-(4.60)$ we get that there exists two random variables $r_{2}(\omega)$ and $c_{8}(\omega)$, depending only on
$\gamma_{1}, Z_{1}$ and $Z_{2},$ such that for arbitrary $\rho >0$ there exists $t(\omega)\leq -1$ such that the following
holds $P$-a.s.. For all $t_{0}\leq t(\omega)$ and $\partial_{z}u_{0}\in (L^{2}(\mho))^{2}$ with $|\partial{u}_{0}|_{2} \leq \rho,$
$\partial_{z}{u}(t,\omega;t_{0},u_{0}) $ satisfies
\begin{eqnarray}
 |\partial_{z}{u}(t,\omega; t_{0}, u_{0} )|_{2}^{2}\leq r_{2}(\omega), \ \ \mathrm{for}\ \mathrm{all}\ t\in [-1,0]
\end{eqnarray}
and
\begin{eqnarray}
\int_{-1}^{0} |\nabla{u}_{z}(s,\omega; t_{0}, u_{0} )|_{2}^{2}ds \leq c_{8}(\omega).
\end{eqnarray}
Regarding $(5.137),(4.59)$ and $(4.61)-(4.63),$ we repeat the procedures of deriving $(4.59)-(4.60)$ to get that there exists two random variables $r_{3}(\omega)$ and $c_{9}(\omega)$, depending only on
$\gamma_{1}, Z_{1}$ and $Z_{2},$ such that for arbitrary $\rho >0$ there exists $t(\omega)\leq -1$ such that the following
holds $P$-a.s.. For all $t_{0}\leq t(\omega)$ and $\nabla u_{0}\in (L^{2}(\mho))^{4}$ with $|\nabla{u}_{0}|_{2} \leq \rho,$
$u(t,\omega;t_{0},u_{0}) $ satisfies
\begin{eqnarray}
 |\nabla {u}(t,\omega; t_{0}, u_{0} )|_{2}^{2}\leq r_{3}(\omega), \ \ \mathrm{for}\ \mathrm{all}\ t\in [-1,0]
\end{eqnarray}
and
\begin{eqnarray}
\int_{-1}^{0} |\Delta{u}(s,\omega; t_{0}, u_{0} )|_{2}^{2}ds \leq c_{9}(\omega).
\end{eqnarray}
By $(5.139), (4.59), (4.62)$ and $(4.64)-(4.65),$ proceeding as above we have that  there exists random variable $r_{4}(\omega)$, depending only on
$\gamma_{1}, Z_{1}$ and $Z_{2},$ such that for arbitrary $\rho >0$ there exists $t(\omega)\leq -1$ such that the following
holds $P$-a.s.. For all $t_{0}\leq t(\omega)$ and $ \theta_{0}\in H^{1}(\mho)$ with $\|\theta_{0}\|_{1} \leq \rho,$
$\theta(t,\omega;t_{0},\theta_{0}) $ satisfies
\begin{eqnarray*}
 \| \theta(t,\omega; t_{0}, \theta_{0} )\|_{1}^{2}\leq r_{4}(\omega), \ \ \mathrm{for}\ \mathrm{all}\ t\in [-1,0].
\end{eqnarray*}
\par
Now we are ready to prove the desired compact result.
Let $r(\omega)= \Sigma_{i=1}^{4}r_{i}(\omega)+\|Z_{1}(-1)\|_{1}^{2}+\|Z_{2}(-1)\|_{1}^{2}$, then  $B(-1,r(\omega))$, the ball of center $0\in \mathcal{V}$ and radius $r(\omega),$ is a absorbing set
 at time $-1$ for $(S(t,s;\omega))_{t\geq s,\omega\in \tilde{\Omega}}$. Therefore, in order to prove the existence of the global random attractor of  the stochastic dynamical system in space $\mathcal{V}$, we need to to construct a compact absorbing set at time $0$ in $\mathcal{V}$ according to Theorem $2.2$. Denote by $\mathcal{B}$  a bounded subset $\mathcal{V}$ and set $\mathcal{C}_{T}$ as a subset of the function space:
\begin{eqnarray*}
 \mathcal{C}_{T}:=\Big{\{} \Big{(}A_{1}^{\frac{1}{2}}\upsilon,   A_{2}^{\frac{1}{2}}T\Big{)}\Big{|}(\upsilon(-1),T(-1))\in \mathcal{B},
 (\upsilon(t),T(t))=S(t,-1;\omega)(\upsilon(-1),T(-1)),t\in[-1,0]\Big{\}}.
\end{eqnarray*}

Since $\mathcal{V}_{i}\subset H_{i}$ is compact, $\mathcal{V}_{1}\times \mathcal{V}_{2}  \subset H_{1}\times H_{2} $ is also compact. Let $(\upsilon(-1), T(-1))\in \mathcal{B} ,$ by the argument of step 2 in the proof of Theorem 3.2 we know
\[
(A_{1}^{\frac{1}{2}}u, A_{2}^{\frac{1}{2}}\theta)\in L^{2}([-1,0];\mathcal{V}_{1}\times \mathcal{V}_{2}),\ \
(\partial_{t}A_{1}^{\frac{1}{2}}u,  \partial_{t}A_{2}^{\frac{1}{2}}\theta )\in L^{2}([-1,0];\mathcal{V}_{1}'\times \mathcal{V}_{2}').
\]
Therefore, by Lemma 2.1 with
\[
B_{0}=\mathcal{V}_{1}\times \mathcal{V}_{2},\ \ B=H_{1}\times H_{2},\ \ B_{1}=\mathcal{V}_{1}'\times \mathcal{V}_{2}',
\]
$\mathcal{C}_{T}$ is compact in $L^{2}([-1,0];\mathcal{H} ).$\\
In order to show that for any fixed $t\in (-1,0],\omega\in \tilde{\Omega}, S(t,-1;\omega) $ is a compact operator in $\mathcal{V} ,$ we take any bounded sequences
$\{(\nu_{0,n}, \tau_{0,n} ) \}_{n\in \mathbb{N}}\subset \mathcal{B}$ and we want to extract, for any fixed $t\in (-1,0]$ and $\omega\in \tilde{\Omega},$ a convergent subsequence from
$\{S(t,-1;\omega)( \nu_{0,n}, \tau_{0,n}) \}$.
Since $\{(A_{1}^{\frac{1}{2}}\upsilon, A_{2}^{\frac{1}{2}}T  ) \}\subset \mathcal{C}_{T} ,$ by Lemma 2.1, there is a function $(\nu_{*},\theta_{*})$:
\[
(\nu_{*},\theta_{*})\in  L^{2}([-1,0];\mathcal{V}),
\]
and a subsequence of $\{S(t,-1;\omega)(\nu_{0,n},\tau_{0,n} ) \}_{n\in \mathbb{N}},$ still denoted by  $\{S(t,-1;\omega)(\nu_{0,n},\tau_{0,n} ) \}_{n\in \mathbb{N}}$  for simplicity, such that
\begin{eqnarray}
\lim\limits_{n\rightarrow \infty}\int_{-1}^{0}\|S(t,-1;\omega)(\nu_{0,n},\tau_{0,n} )-(\nu_{*}(t),\theta_{*}(t))  \|_{1}^{2}dt=0.
\end{eqnarray}
By measure theory, convergence in mean square implies almost sure convergence of a subsequence. Therefore, it follows from $(4.66)$ that there exists a subsequence of $\{S(t,-1;\omega)(\nu_{0,n},\tau_{0,n} ) \}_{n\in \mathbb{N}}, $
still denoted by $\{S(t,-1;\omega)(\nu_{0,n},\tau_{0,n} ) \}_{n\in \mathbb{N}}$ for simplicity , such that
\begin{eqnarray}
 \lim\limits_{n\rightarrow \infty}\|S(t,-1;\omega)(\nu_{0,n},\tau_{0,n} )-(\nu_{*}(t),\theta_{*}(t))  \|_{ 1}=0,\ \ a.e.\ t\in (-1,0].
\end{eqnarray}
Fix any $t\in (-1,0].$ By $(4.67),$ we can select a $t_{0}\in (-1,t)$ such that
\[
 \lim\limits_{n\rightarrow \infty}\|S(t_{0},-1,\omega)(\nu_{0,n},\tau_{0,n} )-(\nu_{*}(t_{0}),\theta_{*}(t_{0}))  \|_{ 1}=0.
\]
Then by the continuity of the map $S(t-t_{0},t_{0};\omega) $ in $ \mathcal{V}$ with respect to initial value, we have
\begin{eqnarray*}
S(t,-1;\omega)(\nu_{0,n},\tau_{0,n})&=&S(t-t_{0},t_{0};\omega)S(t_{0},-1;\omega)(\nu_{0,n},\tau_{0,n})\\
&&\rightarrow S(t-t_{0},t_{0};\omega)(\nu_{*}(t_{0}),\theta_{*}(t_{0})),\ \ \ \mathrm{in}\ \mathcal{V}.
\end{eqnarray*}
Hence for any $t\in(-1,0], \{S(t,-1;\omega)(\nu_{0,n},\tau_{0,n} ) \}_{n\in \mathbb{N}}$   contains a subsequence which is convergent $\mathrm{in}\ \mathcal{V},$ which implies that for any fixed $t\in (-1,0],\omega\in \tilde{\Omega}, S(t,-1;\omega) $ is a compact operator in $\mathcal{V}.$ Let $\mathcal{B}(0,\omega)= \overline{S(0,-1;\omega)B(-1,r(\omega))}$ be the closed set of $ S(0,-1;\omega)B(-1,r(\omega)).$ Then, by the above arguments, we know $\mathcal{B}(0,\omega)$ is a random compact set in $\mathcal{V}.$  More precisely,  $\mathcal{B}(0,\omega)$ is a compact absorbing set in $\mathcal{V}$ at time $0.$ Indeed, for $(\nu_{0,n},\tau_{0,n} )\in \mathcal{B}, $ there exists $s(\mathcal{B})\in \mathbb{R}_{-}$ such that if $s\leq s(\mathcal{B}),$ we have
\begin{eqnarray*}
S(0,s;\omega)(\nu_{0,n},\tau_{0,n})=S(0,-1;\omega)S(-1,s;\omega)(\nu_{0,n},\tau_{0,n})\subset S(0,-1;\omega) B(-1,r(\omega))\subset \mathcal{B}(0,\omega) .
\end{eqnarray*}
Therefore, conclusions $(1)-(7)$ of Theorem $3.1$ follows by Theorem $2.2$.
\hspace{\fill}$\square$

\section{Appendix: A Priori Estimates}
\par
\noindent{$5.1.\ \ \mathbf{proof}\ \mathbf{of}\ \ \mathbf{local}-\mathbf{existence}\  \mathbf{of}\  \mathbf{strong}\  \mathbf{solutions}$. }
Using a similar argument in $\cite{ GMR},$ we obtain the existence of local strong solutions to stochastic PEs.
Assume that $\eta$ is the solution of the initial boundary value problem
\begin{eqnarray*}
&&\partial_{t} \eta+ \nabla p_{b_{1}}-\Delta \eta-\partial_{zz}\eta=0,\\
&&\partial_{z} \eta |_{\Gamma_{u},\Gamma_{b}}=0,\ \ \eta\cdot \vec{n}|_{\Gamma_{l}}=0,\ \  \partial_{\vec{n} } \eta \times \vec{n}|_{\Gamma_{l}}=0,\\
&&\int_{-1}^{0}\nabla\cdot \eta dz=0,\\
&&\eta(0,w)=\upsilon_{0}.
\end{eqnarray*}
If $\upsilon_{0}\in \mathcal{V}_{1},$ then, for any $T>0$ and $a.s.w\in \Omega,$
\begin{eqnarray*}
\eta\in L^{\infty}(0,T;\mathcal{V}_{1})\cap L^{2}(0,T;(H^{2}(\mho))^{2}),
\end{eqnarray*}
see, e.g.,$\cite{GMR}$.
Let $u(t)=\upsilon(t)-Z_{1}(t)-\eta(t):=\upsilon(t)-\tilde{Z}_{1}(t)$ and $\theta(t)= T(t)-Z_{2}(t), t\in \mathbb{R}^{+}.$ Given $\mathcal{T}>0,$ a stochastic process $U(t,w)=(\upsilon,T)$ is a strong solution to $(1.1)-(1.5)$ on $[0,\mathcal{T}]$, if and only if $(u,\theta)$ is a strong solution to the following problem on $[0,\mathcal{T}]:$
\begin{eqnarray}
&&\partial_{t} u-\Delta u-\partial_{zz} u+[(u+\tilde{Z}_{1})\cdot\nabla ](u+\tilde{Z}_{1})+\varphi(u+\tilde{Z}_{1}) \partial_{z}(u+\tilde{Z}_{1})\nonumber\\
&& +f( u+\tilde{Z}_{1})^{\bot}+\nabla p_{s}-\int_{-1}^{z}\nabla T dz'=0;\\
&&\partial_{t}\theta -\Delta \theta-\partial_{zz}\theta +[(u+\tilde{Z}_{1})\cdot\nabla ](\theta+Z_{2})+\varphi(u+\tilde{Z}_{1}) \partial_{z} (\theta+Z_{2})=Q;\\
&&\int^{0}_{-1}\nabla\cdot udz=0;\\
&&\partial_{z} u|_{\Gamma_{u}}=\partial_{z}u|_{\Gamma_{b}}=0;
u\cdot \vec{n}|_{\Gamma_{s}}=0, \partial_{\vec{n}}u\times \vec{n}|_{\Gamma_{s}}=0;\\
&&\Big{(}\partial_{z}\theta+\alpha \theta\Big{)}|_{\Gamma_{u}}=\partial_{z}\theta|_{\Gamma_{b}}=0, \ \ \partial_{\vec{n}}\theta|_{\Gamma_{s}}=0;\\
&&(u\Big{|}_{t=0}, \theta\Big{|}_{t=0})=(0, T_{0}).
\end{eqnarray}
\begin{theorem}
[Existence of local solutions to (5.68)-(5.73)] If $Q\in L^{2}(\mho), v_{0}\in \mathcal{V}_{1}$ and $T_{0}\in \mathcal{V}_{2},$ then, for $P-a.e.\omega \in\Omega,$ there exists a stopping time $T^{*}>0$
such that $(u,\theta)$ is a strong solution of the system $(5.68)-(5.73)$ on the interval $[0, T^{*}].$
\end{theorem}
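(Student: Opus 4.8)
The plan is to construct the local solution pathwise, for $P$-a.e.\ $\omega$, by the Faedo--Galerkin method: first solve a finite-dimensional truncation of $(5.68)$--$(5.73)$, then establish a priori estimates in $\mathcal{V}$ that close only on a short, random time interval, and finally pass to the limit by the Aubin--Lions Lemma $2.1$. Throughout, the processes $\tilde Z_1\in C([0,T];(H^3(\mho))^2)$ and $Z_2\in C([0,T];H^3(\mho))$ are treated as given data, which is legitimate by Propositions $2.1$--$2.2$; in particular the whole argument is run for a fixed path.

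First I would set up the Galerkin scheme. Let $\Pi^1_n$, $\Pi^2_n$ be the orthogonal projections onto the spans of the first $n$ eigenfunctions of $A_1$, $A_2$, and look for $u_n=\sum_{i=1}^n a_i(t)e_{i,1}$, $\theta_n=\sum_{i=1}^n b_i(t)e_{i,2}$ solving the projected version of $(5.68)$--$(5.69)$ with $(u_n(0),\theta_n(0))=(0,\Pi^2_nT_0)$; note that the subtraction of $\eta$ is exactly what forces the velocity initial datum $u_n(0)=0$. Since the nonlinearities are quadratic and $\tilde Z_1,Z_2$ are continuous in $t$, the resulting system of ODEs for $(a_i,b_i)$ has a locally Lipschitz right-hand side, hence a unique maximal solution on some interval $[0,T_n)$.

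The core of the proof is the a priori estimate in $\mathcal{V}$. Taking the $H_1$-inner product of the projected velocity equation with $A_1u_n$ (that is, $L_1u_n$) and of the temperature equation with $A_2\theta_n=L_2\theta_n$, and abbreviating $Y_n(t)=\|u_n(t)\|_1^2+\|\theta_n(t)\|_1^2$, I would estimate each nonlinear term so as to reach an inequality of the form
\begin{eqnarray*}
\frac{d}{dt}Y_n+\frac12\big(\|u_n\|_2^2+\|\theta_n\|_2^2\big)\le C\|u_n\|_1\big(\|u_n\|_2^2+\|\theta_n\|_2^2\big)+\Phi(t)\,Y_n+g(t),
\end{eqnarray*}
where $\Phi$ and $g$ are integrable in time and gather the contributions of $Q$, $\tilde Z_1$ and $Z_2$. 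The main obstacle is the vertical-velocity nonlinearity $\varphi(u_n+\tilde Z_1)\partial_z(u_n+\tilde Z_1)$ (and its temperature analogue): since $\varphi(v)=-\int_{-1}^z\nabla\cdot v\,d\lambda$ loses one horizontal derivative and admits no pointwise control, the isotropic estimate fails. I would instead exploit the special structure of $\varphi$, integrating first in $z$ and then using the anisotropic Ladyzhenskaya/Minkowski inequality on horizontal slices together with the $H^1(M)$--$H^2(M)$ interpolation, exactly in the spirit of the bounds for $K_2,K_3$ and $J_3,J_4$ carried out later in Step $3$ of the proof of Theorem $3.2$. This converts the dangerous self-term into $C\|u_n\|_1(\|u_n\|_2^2+\|\theta_n\|_2^2)$, whose prefactor is small near $t=0$ because $u_n(0)=0$; on the interval where $C\|u_n\|_1<\tfrac14$ the term is absorbed by the dissipation, while the bilinear $u_n$--$Z$, Coriolis and pressure terms are handled by Young's inequality and integration by parts. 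A continuity (bootstrap) argument then fixes a time $T^*=T^*(\omega)>0$, independent of $n$, on which $\|u_n\|_1$ stays below the threshold; Gronwall applied to the resulting inequality yields uniform bounds
\begin{eqnarray*}
u_n\in L^\infty([0,T^*];\mathcal{V}_1)\cap L^2([0,T^*];(H^2(\mho))^2),\qquad \theta_n\in L^\infty([0,T^*];\mathcal{V}_2)\cap L^2([0,T^*];H^2(\mho)),
\end{eqnarray*}
and in particular $T_n>T^*$. Defining $T^*$ as the first time the path-measurable energy reaches the threshold makes it a positive stopping time for a.e.\ $\omega$.

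Finally I would pass to the limit. From these bounds and the equations I would estimate $\partial_tu_n$ and $\partial_t\theta_n$ in $L^2([0,T^*];\mathcal{V}_1')$ and $L^2([0,T^*];\mathcal{V}_2')$; Lemma $2.1$ with $B_0=\mathcal{V}$, $B=\mathcal{H}$, $B_1=\mathcal{V}'$ then gives a subsequence converging strongly in $L^2([0,T^*];\mathcal{H})$, weakly in $L^2([0,T^*];(H^2(\mho))^3)$ and weakly-$\ast$ in $L^\infty([0,T^*];\mathcal{V})$ to a limit $(u,\theta)$. The strong convergence is precisely what is needed to identify the limits of the quadratic nonlinear terms, so that $(u,\theta)$ satisfies $(5.68)$--$(5.69)$ in the weak sense with the regularity required of a strong solution; Lemma $2.2$ then upgrades it to $C([0,T^*];\mathcal{V})$, completing the construction of the local strong solution on $[0,T^*]$.
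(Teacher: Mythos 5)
Your plan is essentially the paper's proof: a Faedo--Galerkin approximation with $u_n(0)=0$ (precisely because $\eta$ has been subtracted), anisotropic estimates exploiting the structure of $\varphi$, a short random time $T^*$ on which the cubic velocity term is absorbed thanks to the smallness of $\|u_n\|_1$ near $t=0$, and passage to the limit via Aubin--Lions. The one place where your write-up is imprecise is the claimed coupled inequality $\frac{d}{dt}Y_n+\frac12(\|u_n\|_2^2+\|\theta_n\|_2^2)\le C\|u_n\|_1(\|u_n\|_2^2+\|\theta_n\|_2^2)+\Phi(t)Y_n+g(t)$ with $\Phi,g$ depending only on the data: the temperature self-term $\int(L_2\theta_n)\varphi(u_n+\tilde Z_1)\partial_z(\theta_n+Z_2)$ does not produce a prefactor $C\|u_n\|_1$ on the $H^2$ norms, but rather (after the anisotropic estimate and Young) a term of the type $\varepsilon\|\theta_n\|_2^2+C\|u_n\|_1^2\|u_n\|_2^2\|\theta_n\|_1^2$, i.e.\ a Gronwall coefficient involving $\|u_n\|_1^2\|u_n\|_2^2$, or equivalently a prefactor $\|u_n\|_1^{1/2}\|\theta_n\|_1^{1/2}$ which is \emph{not} small at $t=0$ since $\theta_n(0)\neq0$. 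The paper resolves this by ordering the estimates: it first derives global-in-time uniform $L^2$ bounds, which give a uniform-in-$m$ bound on $\int_0^t\|T_m\|_1^2\,ds$; this is exactly what makes the absorption time $T^*$ in the velocity $H^1$ estimate $(5.83)$--$(5.84)$ independent of $m$; only afterwards is the temperature $H^1$ estimate $(5.85)$ closed by Gronwall, its coefficient $\|u_m\|_1^2\|u_m\|_2^2$ now known to be integrable from $(5.84)$. Your coupled version can be repaired, but then the continuity argument must bootstrap the full quantity $Y_n$ (so as to control $\|\theta_n\|_1$ in the absorption), not only the threshold $C\|u_n\|_1<\tfrac14$ as stated; alternatively, adopt the paper's decoupled two-step order. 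Apart from this, the construction, the uniform bounds, the Aubin--Lions compactness step and the identification of the nonlinear limits coincide with the paper's argument.
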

\begin{proof}
We use Faedo-Galerkin method to prove the result. Let $(u_{m},T_{m})$ be an approximate solution for the problem $(5.68)-(5.73)$, where $(u_{m},T_{m})=\Sigma_{i=1}^{m}c_{i,m}(t)\xi_{i}(x)$ and $\{\xi_{i} \}_{i\in \mathbb{N}}$ is a completely orthonormal basis of $\mathcal{V}.$
Then $(u_{m},T_{m})$ satisfies
\begin{eqnarray}
&&\int_{\mho}\upsilon_{m}\cdot \partial_{t} u_{m} + \int_{\mho} \upsilon_{m}\cdot\{[(u_{m}+\tilde{Z}_{1})\cdot \nabla ]( u_{m}+\tilde{Z}_{1})+\varphi(u_{m}+\tilde{Z}_{1}) \partial_{z} (u_{m}+\tilde{Z}_{1}) \}\nonumber\\
&&+\int_{\mho} \upsilon_{m}\cdot(u_{m}+\tilde{Z}_{1})^{\bot}-\int_{\mho} \upsilon_{m}\cdot \int_{-1}^{z}\nabla T_{m}d\lambda+\int_{\mho} \upsilon_{m}\cdot L_{1}u_{m}=0,\\
&&\int_{\mho}\tau_{m}\cdot \partial_{t} T_{m}+\int_{\mho} \tau_{m}\cdot\{[(u_{m}+\tilde{Z}_{1})\cdot\nabla ](T_{m}+Z_{2})+\varphi(u_{m}+\tilde{Z}_{1})  \partial_{z}( T_{m}+Z_{2}) \}\nonumber\\
&&+\int_{\mho}\tau_{m}\cdot L_{2}T_{m}=\int_{\mho}\tau_{m}Q,\\
&&u_{m}(0)=0, T_{m}(0)=T_{0m}\rightarrow T_{0},
\end{eqnarray}
where $\upsilon_{m}\in \mathcal{V}_{1m}, \tau_{m}\in  \mathcal{V}_{2m}  $, $ T_{0m} \in  \mathcal{V}_{2m},$ and $\mathcal{V}_{1m}\times \mathcal{V}_{2m}=\mathrm{span}\{\xi_{1},...,\xi_{m} \}. $ We first estimate $u_{m}$ and $T_{m}$ in $(L^{2}(\mho))^{2}$ and $L^{2}(\mho)$ respectively. Let $\tau_{m}=T_{m}.$ By integration by parts and H$\ddot{o}$lder inequality as well as Sobolev embedding theorem, we have
\begin{eqnarray}
\partial_{t}|T_{m}|^{2}_{2}+C\|T_{m}\|_{1}^{2}&\leq& C|Q|^{2}_{2}+\int_{\mho}T_{m}\{[(u_{m}+\tilde{Z}_{1})\cdot \nabla  ]Z_{2}+\varphi(u_{m}+\tilde{Z}_{1} )\partial_{z} Z_{2} \}\nonumber\\
&\leq&C|Q|_{2}^{2}+ | \nabla Z_{2}|_{\infty}|T_{m} |_{2} |u_{m}+\tilde{Z}_{1}|_{2}+|\partial_{z}Z_{2}|_{\infty}|T_{m}|_{2}\|u_{m}+\tilde{Z}_{1}\|_{1}\nonumber\\
&\leq& \varepsilon \|u_{m}\|_{1}^{2}+C\|Z_{2}\|_{3}^{2}|T_{m}|_{2}^{2}+C\|Z_{2}\|_{3}|T_{m}|_{2}\|\tilde{Z}_{1}\|_{1}+C|Q|_{2}^{2}.
\end{eqnarray}
Using H$\ddot{o}$lder inequality, Sobolev embedding theorem and interpolation inequalities, we have
\begin{eqnarray}
&&\int_{\mho}u_{m}[(\tilde{Z}_{1}\cdot \nabla)u_{m}+(u_{m}\cdot \nabla)\tilde{Z}_{1}+(\tilde{Z}_{1}\cdot\nabla)\tilde{Z}_{1}]\nonumber\\
&\leq&|\tilde{Z}_{1}|_{\infty}|u_{m}|_{2}|\nabla u_{m}|_{2}+|u_{m}|_{3}^{2}|\nabla \tilde{Z}_{1}|_{3}+|\tilde{Z}_{1}|_{\infty}|u_{m}|_{2}|\nabla \tilde{Z}_{1}|_{2}\nonumber\\
&\leq&\varepsilon \|u_{m}\|_{1}^{2}+C\|\tilde{Z}_{1}\|_{2}^{2}(|u_{m}|_{2}^{2}+|u_{m}|_{2} ).
\end{eqnarray}
By virtue of  Minkowski inequality, interpolation inequalities as well as Sobolev imbedding theorem, we obtain
\begin{eqnarray*}
&&\int_{\mho}u_{m}\varphi(u_{m} )\partial_{z} \tilde{Z}_{1} d\mho\nonumber\\
&\leq&\int_{M}\Big{(}\int_{-1}^{0} |\mathrm{div} u_{m}|dz \int_{-1}^{0}|u_{m}||\partial_{z}\tilde{Z}_{1}|dz\Big{)} dM\nonumber\\
&\leq&\int_{M}\int_{-1}^{0}|\mathrm{div} u_{m}|dz (\int_{-1}^{0}|u_{m}|^{2}dz)^{\frac{1}{2}}(\int_{-1}^{0}|\partial_{z}\tilde{Z}_{1}|^{2}dz)^{\frac{1}{2}}dM\nonumber\\
&\leq& |\mathrm{div} u_{m}|_{2}\Big{(}\int_{-1}^{0}|u_{m}|_{L^{4}(M)}^{2}    \Big{)}^{\frac{1}{2}}\Big{(}\int_{-1}^{0}|\partial_{z}\tilde{Z}_{1}|_{L^{4}(M)}^{2}    \Big{)}^{\frac{1}{2}}\nonumber\\
&\leq&C \|u_{m}\|_{1}\Big{(}\int_{-1}^{0}|u_{m}|_{L^{2}(M)}\|u_{m}\|_{H^{1}(M)}dz\Big{)}^{\frac{1}{2}}
\Big{(}\int_{-1}^{0}\|\tilde{Z}_{1}\|_{H^{1}(M)}\|\tilde{Z}_{1}\|_{H^{2}(M)}dz\Big{)}^{\frac{1}{2}}\nonumber\\
&\leq&C|u_{m}|_{2}^{\frac{1}{2}}\|u_{m}\|_{1}^{\frac{3}{2}}\|\tilde{Z}_{1}\|_{1}^{\frac{1}{2}}\|\tilde{Z}_{1}\|_{2}^{\frac{1}{2}}\nonumber\\
&\leq&\varepsilon \|u_{m}\|_{1}^{2}+C|u_{m}|_{2}^{2}\|\tilde{Z}_{1}\|_{1}^{2}\|\tilde{Z}_{1}\|_{2}^{2}.
\end{eqnarray*}
Analogously,  we have
\begin{eqnarray*}
&&\int_{\mho}u_{m}\varphi(\tilde{Z}_{1})\partial_{z} u_{m}d\mho\\
&\leq&|\partial_{z}u_{m}|_{2}\int_{-1}^{0}|\mathrm{div} \tilde{Z}_{1}  |_{L^{4}(M)}dz|\Big{(}\int_{-1}^{0}|u_{m}|_{L^{4}(M)}^{2}dz\Big{)}^{\frac{1}{2}}  \\
&\leq&c\|u_{m}\|_{1}\int_{-1}^{0}\|\tilde{Z}_{1}\|_{H^{1}(M)}^{\frac{1}{2}}\|\tilde{Z}_{1}\|_{H^{2}(M)}^{\frac{1}{2}}dz
\Big{(}\int_{-1}^{0}|u_{m}|_{L^{2}(M)}\|u_{m}\|_{H^{1}(M)}dz\Big{)}^{\frac{1}{2}}   \\
&\leq&\varepsilon \|u_{m}\|_{1}^{2}+C |u_{m}|_{2}^{2}\|\tilde{Z}_{1}  \|_{1}^{2}\|\tilde{Z}_{1}  \|_{2}^{2}
\end{eqnarray*}
and
\begin{eqnarray*}
&&\int_{\mho}u_{m}\varphi(\tilde{Z}_{1}) \partial_{z} \tilde{Z}_{1}d\mho\\
&\leq&|\partial_{z}\tilde{Z}_{1}|_{2}\int_{0}^{1}|\mathrm{div} \tilde{Z}_{1}  |_{L^{4}(M) }dz\Big{(}\int_{-1}^{0}|u_{m}|_{L^{4}(M)}^{2}dz\Big{)}^{\frac{1}{2}}  \\
&\leq&c\|\tilde{Z}_{1}\|_{1}\int_{-1}^{0}\|\tilde{Z}_{1}\|_{H^{1}(M)}^{\frac{1}{2}}\|\tilde{Z}_{1}\|_{H^{2}(M)}^{\frac{1}{2}}dz
\Big{(}\int_{-1}^{0}|u_{m}|_{L^{2}(M)}\|u_{m}\|_{H^{1}(M)}dz\Big{)}^{\frac{1}{2}}   \\
&\leq&\varepsilon \|u_{m}\|_{1}^{2}+C|u_{m}|_{2}^{2}+C\|\tilde{Z}_{1}\|_{1}^{3}\|\tilde{Z}_{1}\|_{2}.
\end{eqnarray*}
Combining the above inequalities, we arrive at
\begin{eqnarray}
&&\int_{\mho}u_{m}[\varphi(u_{m} )\partial_{z} \tilde{Z}_{1} +\varphi(\tilde{Z}_{1})\partial_{z} u_{m}+\varphi(\tilde{Z}_{1}) \partial_{z} \tilde{Z}_{1}]d\mho\nonumber\\
&\leq&\varepsilon \|u_{m}\|_{1}^{2}+C|u_{m}|_{2}^{2}\|\tilde{Z}_{1}\|_{1}^{2}\|\tilde{Z}_{1}\|_{2}^{2} +C|u_{m}|_{2}^{2}+C\|\tilde{Z}_{1}\|_{1}^{3}\|\tilde{Z}_{1}\|_{2}.
\end{eqnarray}
Let $\upsilon_{m}=u_{m}.$ Due to integration by parts, we get
\begin{eqnarray*}
\int_{\mho}[(u_{m}\cdot \nabla)u_{m}+\varphi(u_{m} )\partial_{z} u_{m} ]\cdot u_{m}=0
\end{eqnarray*}
and
\begin{eqnarray}
\int_{\mho}u_{m}\cdot\int_{-1}^{z}\nabla T_{m}d\lambda= -\int_{\mho} \nabla \cdot u_{m} \int_{-1}^{z}T_{m}d\lambda.
\end{eqnarray}
Therefore, from $(5.74)$ and $(5.79)-(5.80),$ we conclude that
\begin{eqnarray*}
\partial_{t} |u_{m}|^{2}_{2}+\|u_{m}\|^{2}_{1}&\leq& C\|\tilde{Z}_{1}\|_{2}^{2}+C\|\tilde{Z}_{1}\|_{1}^{3}\|\tilde{Z}_{1}\|_{2}\\
&&+     C|u_{m}|_{2}^{2}(1+\|\tilde{Z}_{1}\|_{2}^{2}+\|\tilde{Z}_{1}\|_{1}^{2}\|\tilde{Z}_{1}\|_{2}^{2})+C|T_{m}|^{2}_{2},
\end{eqnarray*}
which together with $(5.77)$ implies
\begin{eqnarray*}
&&\partial_{t} (|T_{m}|_{2}^{2}+|u_{m}|^{2}_{2} )+\|T_{m}\|^{2}_{1}+\|u_{m}\|^{2}_{1}\\
&\leq& C(1+\|\tilde{Z}_{1}\|_{2}^{2}+\|\tilde{Z}_{1}\|_{1}^{2}\|\tilde{Z}_{1}\|_{2}^{2}   +\|Z_{2}\|_{3}^{2} )(|T_{m}|_{2}^{2}+|u_{m}|^{2}_{2} )\\
&&+C(|Q|_{2}^{2}+\|\tilde{Z}_{1}\|_{1}^{2}+\|\tilde{Z}_{1}\|_{1}^{3}\|\tilde{Z}_{1}\|_{2}+\|Z_{2}\|_{3}^{2}\|\tilde{Z}_{1}\|_{1}^{2}).
\end{eqnarray*}
From the estimate above we infer that $T_{m}$ and $u_{m}$ is uniformly with respect to $m$ bounded in
 $C([0,T^{*}];$ $ (L^{2}(\mho))^{2})\cap L^{2}([0,T^{*}]; (H^{1}(\mho))^{2})$ for any $T^{*}>0.$ In the following, we will estimate $u_{m} $ and $T_{m}$ in $(H^{1}(\mho))^{2}$ and $H^{1}(\mho)$ respectively.
Using H$\ddot{o}$lder inequality and interpolation inequality, we have
\begin{eqnarray}
&&\int_{\mho}L_{1}u_{m}\cdot [(u_{m}+\tilde{Z}_{1})\cdot \nabla](u_{m}+\tilde{Z}_{1} )\nonumber\\
&\leq&\|u_{m}\|_{2}|\nabla u_{m}|_{3}|u_{m}|_{6}+|\tilde{Z}_{1}|_{\infty}\|u_{m}\|_{2}|\nabla u_{m}|_{2}\nonumber\\
&&+|\tilde{Z}_{1}|_{\infty}\|u_{m}\|_{2}(\|u_{m}\|_{1}+\|\tilde{Z}_{1}\|_{1} )\nonumber\\
&\leq&C\|u_{m}\|_{2}^{\frac{3}{2}}\|u_{m}\|_{1}^{\frac{3}{2}}+\|u_{m}\|_{2}\|u_{m}\|_{1}\|\tilde{Z}_{1}\|_{1}^{\frac{1}{2}}\|\tilde{Z}_{1}\|_{2}^{\frac{1}{2}}
+C\|u_{m}\|_{2}\|\tilde{Z}_{1}\|_{2}^{\frac{1}{2}}\tilde{Z}_{1}\|_{1}^{\frac{3}{2}}\nonumber\\
&\leq&\varepsilon \|u_{m}\|_{2}^{2}+C\|u_{m}\|_{1}^{6}+C\|u_{m}\|_{1}^{2}\|\tilde{Z}_{1}\|_{1}\|\tilde{Z}_{1}\|_{2}+C\|\tilde{Z}_{1}\|_{2}\|\tilde{Z}_{1}\|_{1}^{3}.
\end{eqnarray}
Taking a similar argument in $\cite{CT1} $ to obtain the estimates of $\tilde{v}$ in $(L^{6}(\mho))^{2}$,  we get
\begin{eqnarray}
&&\int_{\mho}L_{1}u_{m}\cdot \varphi(u_{m}+\tilde{Z}_{1})\partial_{z} (u_{m}+\tilde{Z}_{1})\nonumber\\
&\leq &
\varepsilon \|u_{m}\|^{2}_{2}+C\|u_{m}\|^{2}_{2}\|u_{m}\|_{1}+C(1+\|u_{m}\|^{2}_{1})\|\tilde{Z}_{1}\|_{1}^{2}\|\tilde{Z}_{1}\|^{2}_{2}.
\end{eqnarray}
Let $\upsilon_{m}=L_{1}u_{m}$ in $(5.74).$ Since by Young inequality we get
\[
\int_{\mho} L_{1}u_{m}\cdot u_{m}^{\bot}\leq \varepsilon \|u_{m}\|_{2}^{2}+C|u_{m}|_{2}^{2},
\]
which combined with $(5.81)-(5.82)$ implies
\begin{eqnarray}
\partial_{t} \|u_{m}\|_{1}^{2}+\|u_{m} \|_{2}^{2}&\leq& C\|u_{m}\|^{2}_{2}\|u_{m}\|_{1}+C\Big{(}1+\|\tilde{Z}_{1}\|^{2}_{1} \|\tilde{Z}_{1}\|^{2}_{2}+\|T_{m}\|_{1}^{2} \Big{)}\nonumber\\
&&+ C\Big{(} \|u_{m}\|_{1}^{4}+\|\tilde{Z}_{1}\|^{2}_{1}\|\tilde{Z}_{1}\|^{2}_{2}+\|\tilde{Z}_{1}\|_{1}\|\tilde{Z}_{1}\|_{2}\Big{)}\|u_{m}\|_{1}^{2}.
\end{eqnarray}
As $u_{m}(0)=0$ and $u_{m}$ is continuous in time with values in $H^{1}(\mho)$ and progressively measurable, we can choose a stopping time $\mathcal{T}_{m}$ such that
\begin{eqnarray*}
\sup\limits_{0\leq t\leq \mathcal{T}_{m} }\|u_{m}(t)\|_{1}\leq \frac{1}{2C}.
\end{eqnarray*}
Next we will show that $\mathcal{T}_{m}$ can be independent of $m$. Integrating $(5.83)$ from $0$ to t, for $t\in [0, \mathcal{T}_{m}],$ we get
\begin{eqnarray*}
\|u_{m}(t)\|_{1}^{2}+\int_{0}^{t}\|u_{m}(t)\|_{2}^{2}ds\leq C\int_{0}^{t}(1+\|\tilde{Z}_{1}\|_{1}^{2}\|\tilde{Z}_{1}\|_{2}^{2}+\|T_{m}\|_{1}^{2} )ds.
\end{eqnarray*}
Since
$
\int_{0}^{t}\|T_{m}(t)\|_{1}^{2}dt
$ is uniformly bounded for arbitrary $t>0$, so we can choose a small $T^{*}$ independent of $m$ such that
\begin{eqnarray*}
C\int_{0}^{T^{*}}(1+\|\tilde{Z}_{1}\|_{1}^{2}\|\tilde{Z}_{1}\|_{2}^{2}+\|T_{m}\|_{1}^{2} )ds\leq \frac{1}{4C^{2}},
\end{eqnarray*}
which implies that for all $m,\mathcal{ T}_{m}$ can be chosen to be equal to $T^{*}$ such that
\begin{eqnarray}
\sup\limits_{t\in[0,T^{*}]}\|u_{m}(t)\|_{1}^{2}+\int_{0}^{T^{*}}\|u_{m}(t)\|_{2}^{2}ds\leq C(T^{*}, \tilde{Z}_{1},Z_{2},Q).
\end{eqnarray}
Therefore, $u_{m}$ uniformly bounded with respect to $m$ in $L^{\infty}([0, T^{*}]; (H^{1}(\mho))^{2})\cap L^{2}([0, T^{*}]; (H^{2}(\mho))^{2}).$ To estimate $T_{m}$ in $H^{1}(\mho),$ setting $\tau_{m}=L_{2}T_{m}$ and using the methods to prove $(5.81)$ and $(5.82),$ we arrive at
\begin{eqnarray*}
&&\int_{\mho}(L_{2}T_{m})[(u_{m}+\tilde{Z}_{1})\cdot \nabla] (T_{m}+Z_{2})\nonumber\\
&\leq& \varepsilon \|T_{m}\|^{2}_{2}+ C\|u_{m}\|^{2}_{1}\|Z_{2}\|_{2}^{2}+C\|T_{m}\|_{1}^{2}\|u_{m}\|^{4}_{1}+C\|\tilde{Z}_{1}\|_{2}^{2} \|T_{m}\|^{2}_{1} +C\|\tilde{Z}_{1}\|_{2}^{2}\|Z_{2}\|_{2}^{2}
\end{eqnarray*}
and
\begin{eqnarray*}
&&\int_{\mho}(L_{2}T_{m})\varphi(u_{m}+\tilde{Z}_{1})\partial_{z} (T_{m}+Z_{2})\nonumber\\
&\leq &\varepsilon \|T_{m}\|^{2}_{2}
+ C\|T_{m}\|^{2}_{1}(\|u_{m}\|_{1}^{2}\|u_{m}\|_{2}^{2}+\|\tilde{Z}_{1}\|_{1}^{2}\|\tilde{Z}_{1}\|_{2}^{2} )+\|Z_{2}\|_{2}^{2}\|u_{m}\|_{1}\|u_{m}\|_{2}+C\|\tilde{Z}_{1}\|_{2}^{2}\|Z_{2}\|_{2}^{2}.
\end{eqnarray*}
Combining the above bounds yields
\begin{eqnarray}
&&\partial_{t}\|T_{m}\|^{2}_{1}+\|T_{m}\|^{2}_{2}\nonumber\\
&\leq& C\|T_{m}\|_{1}^{2}(\|u_{m}\|^{4}_{1}+\|u_{m}\|^{2}_{1}\|u_{m}\|^{2}_{2}+\|\tilde{Z}_{1}\|_{2}^{2}+\|\tilde{Z}_{1}\|_{1}^{2}\|\tilde{Z}_{1}\|_{2}^{2} ) \nonumber\\
&&+C\|u_{m}\|_{1}^{2}\|Z_{2}\|_{2}^{2}+C\|u_{m}\|_{1}\|u_{m}\|_{2}\|Z_{2}\|^{2}_{2}+C\|\tilde{Z}_{1}\|_{1}^{2}\|Z_{2}\|^{2}_{2}+C|Q|^{2}_{2},
\end{eqnarray}
which together with $(5.84)$ implies that $T_{m}$ is uniformly bounded with respect to $m$ in $L^{\infty}([0, T^{*}];$ $ (H^{1}(\mho))^{2})\cap L^{2}([0, T^{*}]; (H^{2}(\mho))^{2}).$ Therefore, by $(5.84), (5.85)$ and a standard argument (see $\cite{L,T1}$), that at this point do not require any novelty,
one can prove further bounds on the time derivative of $(u_{m},T_{m})$, and reasoning on weakly
and strongly convergent subsequences one gets the existence of a solution $(u,\theta)$ with
the regularity specified by the theorem.
\hspace{\fill}$\square$
\end{proof}
\par
\noindent{$5.2. \ \mathbf{A}\ \ \mathbf{Priori}\ \ \mathbf{Estimates}\ \mathbf{For}\ \ \mathbf{The}\ \ \mathbf{Global}\ \ \mathbf{Existence}\  \mathbf{Of}\  \mathbf{Strong}\  \mathbf{Solutions}$. }
In the previous subsections we have proven the existence of strong solution
for a short interval of time, whose length depends on the initial data and other physical parameters of the system $(5.68)-(5.73).$
Let $(\upsilon_{0},T_{0})$ be a given initial condition. In this section we will consider the strong solution that corresponds to
this initial data in its maximal interval of existence $[0, \tau_{*}).$ Specially, for fixed $\omega\in \Omega,$
we will establish $a\ priori$ upper estimates for various norms of this solution in the interval $[0, \tau_{*}).$
In particular, we will show that if $\tau_{*}<\infty$ then the $(H^{1}(\mho))^{3}$ norm of the strong solution is
bounded over the interval $[0, \tau_{*}). $
This key observation plays an important role in the proof of global regularity of strong solution to the system $(5.68)-(5.73).$
\par
Similarly as in $\cite{CF},$  to study the long time behavior of stochastic PEs, we introduce a modified stochastic convolution. For $j=1,2,\ \ \beta>1$ and $t\in \mathbb{R},$ we define
\begin{eqnarray*}
Z_{j}(t):= \int_{-\infty}^{t}e^{-(t-s)(A_{j}+\beta)}dW^{H}_{j}(s)  .
\end{eqnarray*}
Then $ Z_{j}$ is the mild solution of the linear equation
\begin{eqnarray*}
&&dZ_{j}=(-A_{j}Z_{j}-\beta Z_{j})dt+dW^{H}_{j},\\
&&Z_{j}(0)=z_{0}^{j},
\end{eqnarray*}
where $z^{j}_{0}=\int_{-\infty}^{0}e^{s(A_{j}+\beta)}dW^{H}_{j}(s),\ j=1,2 .$ It is easy to see that if $(u, \theta)$ is the global solution of the following system $(5.86)-(5.91)$,
then equivalently it is also the global solution of $(5.68)-(5.73).$
\begin{eqnarray}
&&\partial_{t} u-\Delta u-\partial_{zz} u+[(u+Z_{1})\cdot\nabla ](u+Z_{1})+\varphi(u+Z_{1}) \partial_{z}(u+Z_{1})\nonumber\\
&& +f( u+Z_{1})^{\bot}+\nabla p_{s}-\int_{-1}^{z}\nabla \theta dz'=\beta Z_{1};\\
&&\partial_{t}\theta -\Delta \theta-\partial_{zz}\theta+[(u+Z_{1})\cdot\nabla ](\theta+Z_{2})+\varphi(u+Z_{1}) \partial_{z} (\theta+Z_{2})=Q+\beta Z_{2};\\
&&\int^{0}_{-1}\nabla\cdot udz=0;\\
&&\partial_{z} u|_{\Gamma_{u}}=\partial_{z}u|_{\Gamma_{b}}=0;
u\cdot \vec{n}|_{\Gamma_{s}}=0, \partial_{\vec{n}}u\times \vec{n}|_{\Gamma_{s}}=0;\\
&&\Big{(}\partial_{z}\theta+\alpha \theta\Big{)}|_{\Gamma_{u}}=\partial_{z}\theta|_{\Gamma_{b}}=0, \ \ \partial_{\vec{n}}\theta|_{\Gamma_{s}}=0;\\
&&(u\Big{|}_{t=0}, \theta\Big{|}_{t=0})=(v_{0}-z^{1}_{0},T_{0}-z^{2}_{0}).
\end{eqnarray}
We denote by
\begin{eqnarray*}
\bar{\phi}(x,y)=\int_{-1}^{0}\phi(x,y,\xi)d\xi,\ \ \forall\ (x,y)\in M.
\end{eqnarray*}
In particular,
\begin{eqnarray*}
\bar{u}(x,y)=\int_{-1}^{0}u(x,y,\xi)d\xi,\ \ \ \mathrm{in}\ M.
\end{eqnarray*}
Let
\begin{eqnarray*}
\tilde{u}=u-\bar{u}.
\end{eqnarray*}
Notice that
\begin{eqnarray*}
\tilde{\bar{u}}=0
\end{eqnarray*}
and
\begin{eqnarray*}
\nabla \cdot \bar{u}=0, \ \ \ \ \mathrm{in}\ M.
\end{eqnarray*}
In the following, we will study the properties of $\bar{u}$ and $\tilde{u}.$ By taking the average of equations $(5.86)$ in the $z$ direction,
over the interval $(-1,0),$ and using boundary conditions $(5.89),$ we have
\begin{eqnarray}
&&\partial_{t} \bar{u}+\overline{[(u+Z_{1})\cdot\nabla ](u+Z_{1})+\varphi(u+Z_{1})\partial_{z}(u+Z_{1}) }
+( \bar{u}+\bar{Z_{1}})^{\bot}\nonumber\\
&&+\nabla p_{s}-\int_{-1}^{0}\int_{-1}^{z}\nabla \theta dz'dz-\Delta \bar{u}=\beta\bar{Z_{1} }.
\end{eqnarray}
Based on the above and integration by parts we have
\begin{eqnarray}
\int_{-1}^{0}\varphi(u+Z_{1})\partial_{z}( u+Z_{1}) dz=\int_{-1}^{0}(u+Z_{1}) \nabla \cdot (u+Z_{1}) dz=
\int_{-1}^{0}\nabla\cdot(\tilde{u}+\tilde{Z_{1}})
(\tilde{u}+\tilde{Z_{1}})dz
\end{eqnarray}
and
\begin{eqnarray}
\int_{-1}^{0}(u+Z_{1})\cdot \nabla(u+Z_{1}) dz= \int_{-1}^{0}(\tilde{u}+\tilde{Z_{1}})\cdot \nabla(\tilde{u}+\tilde{Z_{1}})dz
+ (\bar{u}+\bar{Z_{1}})\cdot \nabla(\bar{u}+\bar{Z_{1}}).
\end{eqnarray}
Therefore, substituting $(5.94)$ and $(5.93)$ into $(5.92),$  we can see that $\bar{u}$ satisfies the following equations and boundary conditions:
\begin{eqnarray}
&& \partial_{t} \bar{u}-\Delta \bar{u}+\overline{(\tilde{u}+\tilde{Z_{1}})\cdot \nabla(\tilde{u}+\tilde{Z_{1}})}
+\overline{(\tilde{u}+\tilde{Z_{1}}) \nabla\cdot(\tilde{u}+\tilde{Z_{1}})}+ (\bar{u}+\bar{Z_{1}})\cdot \nabla(\bar{u}+\bar{Z_{1}})\nonumber\\
&&+(\bar{u}+\bar{Z_{1}})^{\bot}+\nabla p_{s}-\nabla \int_{-1}^{0}
\int_{-1}^{z}\theta(x,y,\lambda,t)d\lambda dz=\beta\bar{Z_{1} },
\end{eqnarray}
\begin{eqnarray}
\nabla \cdot \bar{u}=0,\ \ \ \mathrm{in}\ M,
\end{eqnarray}
\begin{eqnarray}
 \bar{u}\cdot \vec{n} =0,\ \partial_{\vec{n}} \bar{u}\times \vec{n}=0\ \ \ \mathrm{on}\ M.
\end{eqnarray}
By subtracting $(5.95)$ from $(5.86)$ and using $(5.89),(5.97)$ we conclude that $\tilde{u}$ satisfies the following equations and boundary
conditions:
\begin{eqnarray}
&& \partial_{t} \tilde{u}-\Delta \tilde{u}- \partial_{zz}\tilde{u}+[(\tilde{u}+\tilde{Z_{1}})]\cdot
 \nabla(\tilde{u}+\tilde{Z_{1}} )+\varphi(\tilde{u}+\tilde{Z_{1}}) \partial_{z} (\tilde{u}+\tilde{Z_{1}} ) \nonumber\\
 && +[ (\tilde{u}+\tilde{Z_{1}} )\cdot \nabla](\bar{u}+\bar{Z_{1}})+[(\bar{u}+\bar{Z_{1}}) \cdot \nabla](\tilde{u}+\tilde{Z_{1}})
 - \overline{(\tilde{u}+\tilde{Z_{1}})\cdot \nabla(\tilde{u}+\tilde{Z_{1}})}\nonumber\\
  &&-\overline{(\tilde{u}+\tilde{Z_{1}}) \nabla\cdot(\tilde{u}+\tilde{Z_{1}})}+(\tilde{u}+\tilde{Z_{1}} )^{\bot}-\int_{-1}^{z}\nabla \theta dz'+
  \int_{-1}^{0}  \int_{-1}^{z}\nabla \theta dz'dz=0,
\end{eqnarray}
\begin{eqnarray}
\partial_{z} \tilde{u}|_{z=0}=0,\ \  \partial_{z} \tilde{u}|_{z=-1}=0,\ \  \tilde{u}\cdot \vec{n}|_{\Gamma_{s}}=0,\ \
\partial_{ \vec{n}} \tilde{u}\times \vec{n}|_{\Gamma_{s}}=0.
\end{eqnarray}

\par
\noindent{$5.2.1.\ \ L^{2}\ \mathbf{estimates}.$}
We take the inner product of equation $(5.87)$ with $\theta,$ in $L^{2}(\mho),$ and get
\begin{eqnarray*}
&&\frac{1}{2}\partial_{t} |\theta|^{2}_{2}+|\nabla \theta|^{2}_{2}+|\theta_{z}|^{2}_{2}+\alpha|\theta(z=0)|^{2}_{2}\\
&&=\int_{\mho}Q\theta-\int_{\mho}\Big{(}(u+Z_{1})\cdot\nabla (\theta+Z_{2})+\varphi (u+Z_{1}) \partial_{z} (\theta+Z_{2})-\beta Z_{2}\Big{)}\theta.
\end{eqnarray*}
By integration by parts, we have
\begin{eqnarray*}
\int_{\mho}\Big{(}(u+Z_{1})\cdot\nabla \theta+\varphi (u+Z_{1})\partial_{z} \theta \Big{)}\theta=0.
\end{eqnarray*}
On account of H$\ddot{o}$lder inequality and Sobolev imbedding theorem, we obtain
\begin{eqnarray*}
\int_{\mho} [\varphi (u+Z_{1})\partial_{z} Z_{2}]\theta&\leq& C|\partial_{z} Z_{2}|_{\infty}|\nabla\cdot u+\nabla \cdot Z_{1}|_{2}|\theta|_{2}\nonumber\\
&\leq& \varepsilon |\nabla \cdot u |_{2}^{2}+C\| Z_{2}\|_{3}^{2}|\theta|_{2}^{2}+C\|Z_{1}\|_{2}^{2}.
\end{eqnarray*}
Since $u\cdot \vec{n}=0  $ on $\partial \mho,$ by Exercise II $\!\!.5.15$ in $\cite{Gp}$ there exists a positive constant $C=C(\mho)$ such that $|u|_{2}\leq C|\nabla u|_{2}.$
 Therefore, taking a similar argument as above we have
\begin{eqnarray*}
\int_{\mho} [(u+Z_{1})\cdot \nabla Z_{2}]\theta\leq \varepsilon |\nabla u|_{2}^{2}+C|Z_{1}|_{2}^{2}+C\|Z_{2}\|_{3}^{2}|\theta|_{2}^{2}.
\end{eqnarray*}
Combining the above bounds, we arrive at
\begin{eqnarray*}
&&\partial_{t} |\theta|^{2}_{2}+(2-\varepsilon)|\nabla \theta|^{2}_{2}+(2-\varepsilon)|\theta_{z}|^{2}_{2}+(2\alpha-\varepsilon)|\theta(z=0)|^{2}_{2}\\
&\leq & 2\varepsilon| \nabla u|_{2}^{2} +C|Q|_{2}^{2}+C\|Z_{1}\|_{1}^{2}+C|Z_{2}|_{2}^{2}+C\|Z_{2}\|_{3}^{2}|\theta|_{2}^{2},
\end{eqnarray*}
where we have used $|\theta|_{2}^{2}\leq C\|\theta\|_{1}^{2} $ and
\begin{eqnarray*}
c\|\theta\|_{1}^{2}\leq |\nabla \theta|^{2}_{2}+|\theta_{z}|^{2}_{2}+\alpha|\theta(z=0)|^{2}_{2}\leq C\|\theta\|_{1}^{2},\ \mathrm{for}\ \mathrm{some}\ c,\ C>0.
\end{eqnarray*}
To obtain the global well-posedness, we take an analogous argument in section 5.1 and reach
\begin{eqnarray*}
\partial_{t}|u|^{2}_{2}+2|\nabla u|^{2}_{2}+2|u_{z}|_{2}^{2}\leq C|u|^{2}_{2}(\|Z_{1}\|^{2}_{2}+\|Z_{1}\|^{4}_{2} )+C\|Z_{1}\|^{2}_{2}+C|\theta|^{2}_{2},
\end{eqnarray*}
which together with the bound of $|\theta|^{2}_{2}$ and Gronwall inequality imply that
\begin{eqnarray}
\sup\limits_{t \in [0, \tau_{*})}(|u(t)|^{2}_{2}+|\theta(t)|^{2}_{2} ) +\int_{0}^{\tau^{*}}\|u(s)\|^{2}_{1}+\|\theta(s)\|_{1}^{2}ds\leq C(v_{0}, T_{0}, Z_{1}, Z_{2} ).
\end{eqnarray}
In order to prove the existence of random attractor of stochastic PEs, we need to get a more delicate and careful $a\ priori$ estimate of $|u(t)|^{2}_{2}.$
From $(5.80)$ and H$\ddot{o}$lder inequality we infer that there exists $c\in (\frac{1}{2\alpha}\vee \frac{1}{2}, 4 )$ such that
\begin{eqnarray*}
\int_{\mho}u\cdot\int_{-1}^{z}\nabla\theta dz'\leq |\nabla u|_{2}|\theta|_{2}\leq \frac{c}{2} |\nabla u|_{2}^{2}+\frac{1}{2 c} |\theta|_{2}^{2},
\end{eqnarray*}
where we assume $\alpha > \frac{1}{8}$ and $\frac{1}{2\alpha}\vee \frac{1}{2}=\max\{\frac{1}{2\alpha}  , \frac{1}{2}     \} .$ Then, by the estimates of $|u_{m}|_{2}$ in section 5.1 we have
\begin{eqnarray*}
\partial_{t}|u|^{2}_{2}+2|\nabla u|^{2}_{2}+2|u_{z}|_{2}^{2}\leq C|u|^{2}_{2}(\|Z_{1}\|^{2}_{2}+\|Z_{1}\|^{4}_{2} )+C\|Z_{1}\|^{2}_{2}+\frac{1}{2 c}|\theta|^{2}_{2}+\frac{c}{2}|\nabla u|^{2}_{2}.
\end{eqnarray*}
By $(48)$ in $\cite{CT1}$,
\begin{eqnarray*}
|\theta|^{2}_{2}\leq 2|\theta_{z}|^{2}_{2}+2|\theta(z=0)|^{2}_{2},
\end{eqnarray*}
which implies
\begin{eqnarray*}
&&\frac{1}{2 c} |\theta|_{2}^{2}
\leq  \frac{1}{c}|\theta_{z}|^{2}_{2}+\frac{1}{c}|\theta(z=0)|^{2}_{2}<2|\theta_{z}|^{2}_{2}+2\alpha|\theta(z=0)|^{2}_{2} .
\end{eqnarray*}
In view of the bounds of $\theta $ and $u,$ we conclude that
\begin{eqnarray}
&&\partial_{t} (|u|^{2}_{2}+|\theta|_{2}^{2})+(2-\frac{c}{2}-3\varepsilon)(| \nabla u|^{2}_{2}+|\nabla \theta|_{2}^{2})
+(2-\frac{1}{c}-\varepsilon)|u_{z}+\theta_{z}|_{2}^{2}\nonumber\\
&&+(2\alpha-\frac{1}{c}-\varepsilon)|\theta(z=0)|_{2}^{2}\leq C(|u|_{2}^{2}+|\theta|_{2}^{2})(\|Z_{1}\|_{2}^{2}+\|Z_{1}\|_{2}^{4}+\|Z_{2}\|_{3}^{2})\nonumber\\
&&+C(|Q|_{2}^{2}+\|Z_{1}\|_{1}^{2}).
\end{eqnarray}
Since there exists $\gamma_{1}>0$ such that
\begin{eqnarray*}
(2-\frac{c}{2}-3\varepsilon)(| \nabla u|^{2}_{2}\!\!\!\!&+&\!\!\!\!|\nabla \theta|_{2}^{2})
+(2-\frac{1}{c}-\varepsilon)|u_{z}+\theta_{z}|_{2}^{2}\\
&&+(2\alpha-\frac{1}{c}-\varepsilon)|\theta(z=0)|_{2}^{2}>\gamma_{1} (|u|^{2}_{2}+|\theta|_{2}^{2} ).
\end{eqnarray*}
Then, for each $t\in [0, \tau_{*}),$ by Gronwall inequality we obtain
\begin{eqnarray}
|u(t)|^{2}_{2}+|\theta(t)|^{2}_{2}&\leq& (|u(0)|^{2}_{2}+|\theta(0)|^{2}_{2})
e^{\int_{0}^{t}-\gamma_{1}+C(\|Z_{1}\|_{2}^{2}+\|Z_{1}\|_{2}^{4}+\|Z_{2}\|_{3}^{2})ds }\nonumber\\
&&+\int_{0}^{t}e^{\int_{s}^{t}-\gamma_{1}+C(\|Z_{1}\|_{2}^{2}+\|Z_{1}\|_{2}^{4}+\|Z_{2}\|_{3}^{2})ds } (|Q|_{2}^{2}+\|Z_{1}\|_{1}^{2})ds,
\end{eqnarray}
which also implies $(5.100).$
\par
\noindent{$5.2.2.\ \ L^{4}\ \mathbf{estimates}\ \mathbf{about}\ \theta\ \mathbf{and}\ \tilde{u}$. }
Taking the inner product of the equation $(5.87)$ with $\theta^{3}$ in $L^{2}(\mho)$ and obtain
\begin{eqnarray}
&&\frac{1}{4}\partial_{t}|\theta|^{4}_{4}+\frac{3}{4}|\nabla \theta^{2}|^{2}_{2}+\frac{3}{4}|(\theta^{2})_{z}|_{2}^{2}+\alpha\int_{M}|\theta(z=0)|^{4}\nonumber\\
&=&\int_{\mho}Q\theta^{3}-\int_{\mho}[(u+Z_{1})\cdot \nabla (\theta+Z_{2})+\varphi(u+Z_{1})\partial_{z}( \theta+Z_{2}) +\beta Z_{2}]\theta^{3}.
\end{eqnarray}
By integration by parts, we have
\begin{eqnarray}
\int_{\mho}[(u+Z_{1})\cdot \nabla \theta+\varphi(u+Z_{1})\partial_{z}\theta ]\theta^{3}=0.
\end{eqnarray}
Using h$\ddot{o}$lder inequality,
\begin{eqnarray*}
\int_{\mho}[\varphi(u+Z_{1})\partial_{z} Z_{2}  ]\theta^{3}&\leq&|\partial_{z}Z_{2}|_{\infty}|\nabla \cdot u+\nabla \cdot Z_{1}|_{2}|\theta^{3}|_{2}\nonumber\\
&\leq&\|Z_{2}\|_{3}|\nabla \cdot u+\nabla \cdot Z_{1}|_{2}|\theta^{2}|_{3}^{\frac{3}{2}}.\nonumber
\end{eqnarray*}
Applying interpolation inequality to $|\theta^{2}|_{3}$, we obtain
\begin{eqnarray*}
|\theta^{2}|_{3}\leq C |\theta^{2}|_{2}^{\frac{1}{2}}(|\nabla \theta^{2}|_{2}^{\frac{1}{2}}+ |\partial_{z} \theta^{2}|_{2}^{\frac{1}{2}}+\alpha|\theta^{2}(z=0) |_{2}^{\frac{1}{2}}   ),
\end{eqnarray*}
which together with H$\ddot{o}$lder inequality implies that
\begin{eqnarray}
\int_{\mho}[\varphi(u+Z_{1})\partial_{z} Z_{2}  ]\theta^{3}\!\!\!&\leq&\!\!\! \varepsilon (|\nabla \theta^{2}|_{2}^{2}+   |\partial_{z} \theta^{2}|_{2}^{2}+\alpha|\theta^{2}(z=0) |_{2}^{2} )+C\|Z_{2}\|_{3}^{\frac{8}{5}}\|u+Z_{1}\|_{1}^{\frac{8}{5}}|\theta|_{4}^{\frac{12}{5}}.
\end{eqnarray}
Taking a similar argument, we have
\begin{eqnarray}
\int_{\mho}[(u+Z_{1} )\cdot \nabla Z_{2} ]\theta^{3}\!\!\!&\leq&\!\!\! \varepsilon (|\nabla \theta^{2}|_{2}^{2}+   |\partial_{z} \theta^{2}|_{2}^{2}+\alpha|\theta^{2}(z=0) |_{2}^{2} )+C\|Z_{2}\|_{3}^{\frac{8}{5}}|u+Z_{1}|_{2}^{\frac{8}{5}}|\theta|_{4}^{\frac{12}{5}}.
\end{eqnarray}
Analogously,  we deduce
\begin{eqnarray}
\int_{\mho}(Q+Z_{2})\theta^{3}\leq \varepsilon (|\nabla \theta^{2}|_{2}^{2}+   |\partial_{z} \theta^{2}|_{2}^{2}+\alpha|\theta^{2}(z=0) |_{2}^{2} )+C(|Q|_{2}^{\frac{8}{5}}+ |Z_{2}|_{2}^{\frac{8}{5}}  )|\theta|_{4}^{\frac{12}{5}}.
\end{eqnarray}
Therefore, combining $(5.103)-(5.107),$ we arrive at
\begin{eqnarray}
&&\partial_{t}|\theta|^{4}_{4}+|\nabla \theta^{2}|^{2}_{2}+|(\theta^{2})_{z}|_{2}^{2}+\alpha\int_{M}|\theta(z=0)|^{4}\nonumber\\
&\leq & C(|Q|_{2}^{\frac{8}{5}}+ \|Z_{2}\|_{3}^{\frac{8}{5}}+\|Z_{1}\|_{1}^{\frac{8}{5}}+ \|Z_{2}\|_{3}^{\frac{8}{5}}\|u\|_{1}^{\frac{8}{5}})|\theta|_{4}^{\frac{12}{5}}.
\end{eqnarray}
Since by Young's inequality
\begin{eqnarray*}
|\theta|_{4}^{4}=\int_{\mho}\theta^{4}\!\!&=&\!\!-\int_{M}\int_{-1}^{0}\int_{z}^{0}\partial_{z}\theta^{4}+\int_{M}\int_{-1}^{0}\theta^{4}(z=0)\\
&\leq&8 |(\theta^{2})_{z}|_{2}^{2}+\frac{1}{2}|\theta|_{4}^{4}+\int_{M}\theta^{4}(z=0),
\end{eqnarray*}
we have
\begin{eqnarray*}
|\theta|_{4}^{4}\leq 16|\partial_{z} \theta |_{2}^{2}+2|\theta(z=0)|_{4}^{4},
\end{eqnarray*}
which combined $(5.108)$ implies
\begin{eqnarray*}
&&\partial_{t}|\theta|^{4}_{4}+|\theta|^{4}_{4}\leq C(|Q|_{2}^{\frac{8}{5}}+ \|Z_{2}\|_{3}^{\frac{8}{5}}+\|Z_{1}\|_{1}^{\frac{8}{5}}+ \|Z_{2}\|_{3}^{\frac{8}{5}}\|u\|_{1}^{\frac{8}{5}})|\theta|_{4}^{\frac{12}{5}}
\end{eqnarray*}
or
\begin{eqnarray*}
\partial_{t} |\theta|^{2}_{4}+|\theta|^{2}_{4}
\leq C(|Q|_{2}^{\frac{8}{5}}+ \|Z_{2}\|_{3}^{\frac{8}{5}}+\|Z_{1}\|_{1}^{\frac{8}{5}}+ \|Z_{2}\|_{3}^{\frac{8}{5}}\|u\|_{1}^{\frac{8}{5}})|\theta|_{4}^{\frac{2}{5}}.
\end{eqnarray*}
Then, using Gronwall inequality yields,
\begin{eqnarray}
|\theta(t)|^{2}_{4}\leq |\theta(t=0)|^{2}_{4}e^{-Ct}+C\int_{0}^{t}e^{-C(t-s)}(|Q|_{2}^{2}+ \|Z_{2}\|_{3}^{2}+\|Z_{1}\|_{1}^{2}+ \|Z_{2}\|_{3}^{2}\|u\|_{1}^{2})ds
\end{eqnarray}
for $t\in[0, \tau_{*}).$
Since, by integration by parts and boundary conditions $(5.99)$ we have
\begin{eqnarray*}
&&\int_{\mho}[(\tilde{u}\cdot \nabla)\tilde{u}-(\int_{-1}^{z}\nabla \cdot \tilde{u}d\lambda)
\partial_{z} \tilde{u} ]|\tilde{u}|^{2}\tilde{u}=0
\end{eqnarray*}
and
\begin{eqnarray*}
&&\int_{\mho}[(\bar{u}+\bar{Z_{1}} )\cdot\nabla \tilde{u}]|\tilde{u}|^{2}\tilde{u}=-\frac{1}{4}\int_{\mho}|\tilde{u}|^{4}\nabla
\cdot(\bar{u}+\bar{Z_{1}} )=0,
\end{eqnarray*}
as well as
\begin{eqnarray*}
&&\int_{\mho}{[((\tilde{u}+\tilde{Z_{1}} )\cdot \nabla )(\bar{u}+\bar{Z_{1}}) }]\cdot |\tilde{u}|^{2}\tilde{u}\\
&& =-\int_{\mho}{[((\tilde{u}+\tilde{Z_{1}} )\cdot \nabla )
|\tilde{u}|^{2}\tilde{u}}]\cdot(\bar{u}+\bar{Z_{1}})-\int_{\mho} \Big{(}\nabla \cdot (\tilde{u}+\tilde{Z_{1}} ) \Big{)} |\tilde{u}|^{2}\tilde{u} \cdot (\bar{u}+\bar{Z_{1}} )
\end{eqnarray*}
and
\begin{eqnarray*}
&&\int_{\mho}\overline{(\tilde{u}+\tilde{Z_{1}} )\nabla \cdot (\tilde{u}+\tilde{Z_{1}} )
+(\tilde{u}+\tilde{Z_{1}} )\cdot \nabla (\tilde{u}+\tilde{Z_{1}} )}\cdot| \tilde{u}|^{2}\tilde{u}\\
&&=-\int_{\mho}\overline{(\tilde{u}_{k}+\tilde{Z}_{1,k} ) (\tilde{u}_{j}+\tilde{Z}_{1,j} ) }\partial_{x_{k}}(| \tilde{u}|^{2}\tilde{u}_{j} ),
\end{eqnarray*}
where $\tilde{u}_{k}$ and $\tilde{Z}_{1,k} $ are the $k$'th coordinate of  $\tilde{u}$ and $\tilde{Z_{1}} $ respectively with $k=1,2.$
Taking the inner product of the equation $(5.98)$ with $|\tilde{u}|^{2}\tilde{u}$ in $(L^{2}(\mho))^{2},$ by the above equalities about $\tilde{u}$  we get
\begin{eqnarray}
&&\frac{1}{4}\partial_{t} |\tilde{u}|_{4}^{4}+
\frac{1}{2}\int_{\mho}\Big{(}|\nabla(|\tilde{u} |^{2}) |^{2}+ |\partial_{z}(|\tilde{u} |^{2}) |^{2}\Big{)}
+\int_{\mho}|\tilde{u}|^{2}(|\nabla \tilde{u} |^{2}+|\partial_{z}\tilde{u} |^{2})\nonumber\\
&&=-\int_{\mho}[\tilde{Z_{1}}\cdot \nabla \tilde{u} + \tilde{u} \cdot \nabla \tilde{Z_{1}}+ \tilde{Z_{1}}\cdot \nabla \tilde{Z_{1}} ]
\cdot|\tilde{u}|^{2}\tilde{u}\nonumber\\
&&-\int_{\mho}[\varphi(\tilde{Z_{1}} )\partial_{z}\tilde{u}+\varphi(\tilde{u})\partial_{z}\tilde{Z_{1}}+\varphi(\tilde{Z_{1}})\partial_{z}\tilde{Z_{1}} ]\cdot
|\tilde{u}|^{2}\tilde{u}\nonumber\\
&&+\int_{\mho}(\bar{u}+\bar{Z_{1}} )\cdot[(\tilde{u}+\tilde{Z_{1}} )\cdot\nabla ]|\tilde{u}|^{2}\tilde{u}
+\int_{\mho}[\nabla\cdot(\tilde{u}+\tilde{Z_{1}} )](\bar{u}+\bar{Z_{1}} )\cdot
|\tilde{u}|^{2}\tilde{u}\nonumber\\
&&-\int_{\mho}\{[(\bar{u}+\bar{Z_{1}} )\cdot \nabla ]\tilde{Z_{1}} \}|\tilde{u}|^{2}\tilde{u}
-\int_{\mho}\overline{(\tilde{u}_{k}+\tilde{Z}_{1,k} ) (\tilde{u}_{j}+\tilde{Z}_{1,j} ) }\partial_{x_{k}}(| \tilde{u}|^{2}\tilde{u}_{j} )\nonumber\\
&&-\int_{\mho}(fk\times \tilde{Z_{1}})\cdot|\tilde{u}|^{2}\tilde{u}\nonumber\\
&&-\int_{\mho}\Big{(}\int_{-1}^{z}\theta d\lambda-  \int_{-1}^{0}\int_{-1}^{z}\theta d\lambda dz  \Big{)}\nabla \cdot |\tilde{u}|^{2}\tilde{u}
:=\Sigma_{j=1}^{8}I_{j}.
\end{eqnarray}
Next, we estimate $I_{j}$ respectively,  for $j=1,\cdots, 8$. Since by integration by parts, we have
\begin{eqnarray}
&&\int_{\mho}(\tilde{Z_{1}} \cdot \nabla \tilde{u} )\cdot |\tilde{u}|^{2}\tilde{u}+\int_{\mho}(\varphi(\tilde {Z_{1}})\partial_{z} \tilde{u})|\tilde{u}|^{2}\tilde{u}\nonumber\\
&=&\int_{\mho}(\tilde{Z_{1}}\cdot \nabla \tilde{u_{i}})| \tilde{u}|^{2}\tilde{u_{i}}+\int_{\mho}(\varphi(\tilde {Z_{1}})\partial_{z}\tilde{u_{i}})
| \tilde{u}|^{2}\tilde{u_{i}}\nonumber\\
&=&\frac{1}{4}\int_{\mho}\tilde{Z_{1}} \cdot \nabla |\tilde{u} |^{4}+\frac{1}{4}\int_{\mho}\varphi(\tilde {Z_{1}})\partial_{z}|\tilde{u_{i}}|^{4}=0.
\end{eqnarray}
Therefore, to estimate $I_{1}$(or $I_{2}$), we need only to estimate the other two terms. By interpolation inequalities and H$\ddot{o}$lder inequality, we get
\begin{eqnarray*}
\int_{\mho} (\tilde{u}\cdot \nabla \tilde{Z_{1}})|\tilde{u}|^{2}\tilde{u}&\leq&
|\nabla \tilde{Z_{1}}|_{3}\Big{(}\int_{\mho} (|\tilde{u}|^{2} )^{3}\Big{)}^{\frac{2}{3}} \nonumber\\
&\leq&C |\nabla \tilde{Z_{1}}|_{3} |(|\tilde{u}|^{2})|_{2} \|(|\tilde{u}|^{2}) \|_{1}\nonumber\\
&\leq&C |\nabla \tilde{Z_{1}}|_{3} |(|\tilde{u}|^{2})|_{2}\Big{(}|\nabla( |\tilde{u}|^{2}) |_{2}
+ |\partial_{z}( |\tilde{u}|^{2}) |_{2} +|(|\tilde{u}|^{2})|_{2} \Big{)}\nonumber\\
&\leq& \varepsilon \Big{(} |\nabla( |\tilde{u}|^{2}) |_{2}^{2}
+ |\partial_{z}( |\tilde{u}|^{2}) |_{2}^{2}   \Big{)}+C\| \tilde{Z_{1}}\|^{2}_{2}|\tilde{u}|^{4}_{4}.
\end{eqnarray*}
Analogously, we have
\begin{eqnarray*}
\int_{\mho}(\tilde{Z_{1}}\cdot\nabla \tilde{Z_{1}} )\cdot |\tilde{u}|^{2} \tilde{u}&\leq& |(|\tilde{u}^{2}|) |_{3}^{\frac{3}{2}}
|\nabla \tilde{Z_{1}} |_{3}|\tilde{Z_{1}} |_{6}\nonumber\\
&\leq &C |(|\tilde{u}|^{2})|^{\frac{3}{4}}_{2}\|(|\tilde{u}|^{2}) \|_{1}^{\frac{3}{4}}|\nabla \tilde{Z_{1}} |_{3}|\tilde{Z_{1}} |_{6}\nonumber\\
&\leq& \varepsilon \Big{(} |\nabla( |\tilde{u}|^{2}) |_{2}^{2}
+ |\partial_{z}( |\tilde{u}|^{2}) |_{2}^{2}   \Big{)}+C\|\tilde{Z_{1}} \|_{2}^{\frac{16}{5}}|\tilde{u} |^{\frac{12}{5}}_{4}.
\end{eqnarray*}
Therefore, based on the above bounds we infer that
\begin{eqnarray*}
I_{1}&\leq& 2\varepsilon \Big{(} |\nabla( |\tilde{u}|^{2}) |_{2}^{2}
+ |\partial_{z}( |\tilde{u}|^{2}) |_{2}^{2}   \Big{)}+C\|Z_{1}\|_{2}^{2}|\tilde{u}|_{4}^{4}+C\|Z_{1} \|_{2}^{\frac{16}{5}}|\tilde{u} |^{\frac{12}{5}}_{4}.
\end{eqnarray*}
Applying integration by parts  formula on $M$, we reach
\begin{eqnarray*}
\int_{\mho}\varphi(\tilde{u})\partial_{z}\tilde{Z}_{1} \cdot |\tilde{u}|^{2}\tilde{u}&=&\int_{-1}^{0}[\int_{M}\varphi(\tilde{u})\partial_{z}\tilde{Z}_{1,i}   |\tilde{u}|^{2}\tilde{u}_{i}]dz\\
&=&\int_{-1}^{0}\Big{(}\int_{M}(\int_{-1}^{z}\tilde{u}dz' )[(\nabla \partial_{z}\tilde{Z}_{1,i})(|\tilde{u}|^{2}\tilde{u}_{i})+(\partial_{z}\tilde{Z}_{1,i}) \nabla (|\tilde{u}|^{2}\tilde{u}_{i}) ]\Big{)}dz
\end{eqnarray*}
which together with H$\ddot{o}$lder inequality implies
\begin{eqnarray*}
\int_{\mho}\varphi(\tilde{u})\partial_{z}\tilde{Z}_{1}
\cdot |\tilde{u}|^{2}\tilde{u}
&\leq& |\nabla \partial_{z} \tilde{Z}_{1,i}|_{2}
|( \int_{-1}^{z}\tilde{u}dz')|\tilde{u}_{i}||_{4}||\tilde{u}|^{2} |_{4}\nonumber\\
&&+|\nabla |\tilde{u}|^{2}|_{2} | (\int_{-1}^{z}\tilde{u}dz')|\tilde{u}|  |_{3}|\partial_{z}\tilde{Z}_{1,i} |_{6}\nonumber\\
&\leq&C\|Z_{1}\|_{2}|\tilde{u}|_{8}^{4}+C\|Z_{1}\|_{2}|\tilde{u}|_{6}^{2}|\nabla |\tilde{u}|^{2} |_{2}.
\end{eqnarray*}
Since, by interpolation inequality,
\begin{eqnarray*}
|\tilde{u}|_{8}^{4}=||\tilde{u}|^{2}|_{4}^{2}\leq C ||\tilde{u}|^{2}|_{2}^{\frac{1}{2}}( |\nabla|\tilde{u}|^{2}|_{2}^{\frac{3}{2}}  +
|\partial_{z}|\tilde{u}|^{2}|_{2}^{\frac{3}{2}}   )
\end{eqnarray*}
and
\begin{eqnarray*}
|\tilde{u}|_{6}^{2}=||\tilde{u}|^{2}|_{3}\leq C ||\tilde{u}|^{2}|_{2}^{\frac{1}{2}}( |\nabla|\tilde{u}|^{2}|_{2}^{\frac{1}{2}}  +
|\partial_{z}|\tilde{u}|^{2}|_{2}^{\frac{1}{2}}   ),
\end{eqnarray*}
taking into account of H$\ddot{o}$lder inequality we conclude
\begin{eqnarray}
\int_{\mho}\varphi(\tilde{u})\partial_{z}\tilde{Z_{1}}
\cdot |\tilde{u}|^{2}\tilde{u}
&\leq& \varepsilon ( |\nabla|\tilde{u}|^{2}|_{2}^{2}  +
|\partial_{z}|\tilde{u}|^{2}|_{2}^{2}   )+C\|Z_{1}\|_{2}^{4}|\tilde{u}|_{4}^{4}.
\end{eqnarray}
Using again integration by parts formula and H$\ddot{o}$lder inequality,
\begin{eqnarray}
\int_{\mho}\varphi(\tilde{Z}_{1})\cdot \partial_{z}\tilde{Z}_{1}\cdot |\tilde{u}|^{2}\tilde{u}
&\leq&|(|\tilde{u}|^{3})|_{2} |\partial_{z} \tilde{Z}_{1}|_{3} |\nabla\cdot\tilde{Z}_{1}|_{6}\leq C|(|\tilde{u}|^{2})|_{3}^{\frac{3}{2}}\|\tilde{Z}_{1}\|_{2}^{2}\nonumber\\
&\leq&C|(|\tilde{u}|^{2})|_{2}^{\frac{3}{4}}\Big{(}|\nabla( |\tilde{u}|^{2}) |_{2}^{\frac{3}{4}}
+ |\partial_{z}( |\tilde{u}|^{2}) |_{2}^{\frac{3}{4}} +|(|\tilde{u}|^{2})|_{2}^{\frac{3}{4}} \Big{)}\|\tilde{Z}_{1}\|_{2}^{2}\nonumber\\
& \leq&\varepsilon (|\nabla (|\tilde{u}|^{2} ) |^{2}_{2}+|\partial_{z}(|\tilde{u}|^{2})  |^{2}_{2} )+C\|\tilde{Z}_{1} \|_{2}^{\frac{16}{5}}|\tilde{u}|^{\frac{12}{5}}_{4}+C\|\tilde{Z}_{1}\|^{2}_{2}|\tilde{u} |^{3}_{4}.
\end{eqnarray}
Therefore, from $(5.111)-(5.113)$ we conclude
\begin{eqnarray*}
I_{2}&\leq& \varepsilon \Big{(} |\nabla (|\tilde{u}|^{2}) |^{2}_{2}+|\partial_{z}(|\tilde{u}|^{2}) |^{2}_{2}    \Big{)}
+C \|Z_{1}\|_{2}^{\frac{16}{5}}|\tilde{u}|_{4}^{\frac{12}{5}}+
C\|Z_{1}\|_{2}^{2} |\tilde{u} |^{3}_{4}+C\|Z_{1}\|_{2}^{4}|\tilde{u}|_{4}^{4}.
\end{eqnarray*}
To estimate $I_{3}$, we first consider
\begin{eqnarray}
&&\int_{\mho}(\bar{u}+\bar{Z}_{1} )\cdot \{[(\tilde{u}+\tilde{Z}_{1} )\cdot \nabla ]|\tilde{u}|^{2}\tilde{u}\}\nonumber\\
&=&\int_{\mho}\bar{u}\cdot\{[(\tilde{u}+\tilde{Z}_{1} )\cdot \nabla ]|\tilde{u}|^{2}\tilde{u}\}+\int_{\mho}\bar{Z}_{1}\cdot
\{[(\tilde{u}+\tilde{Z}_{1} )\cdot \nabla ]|\tilde{u}|^{2}\tilde{u}\}.
\end{eqnarray}
Applying H$\ddot{o}$lder inequality repeatedly on the first term on the right hand side of $(5.114)$ ,
\begin{eqnarray*}
&&\int_{\mho}\bar{u}\cdot\{[(\tilde{u}+\tilde{Z}_{1} )\cdot \nabla ]|\tilde{u}|^{2}\tilde{u}\}\\
&\leq&C\int_{M}|\bar{u}|\int_{-1}^{0}|\tilde{u}||\nabla \tilde{u}| |\tilde{u}|^{2}dz+|\tilde{Z}|_{L^{\infty}_{x}}
\int_{M}|\bar{u}|\int_{-1}^{0}|\nabla \tilde{u}||\tilde{u}|^{2}dz\\
&\leq&\int_{M}\Big{[}|\bar{u}|\Big{(}\int_{-1}^{0}|\tilde{u}|^{2}|\nabla \tilde{u}|^{2}dz \Big{)}^{\frac{1}{2}}
\Big{(}\int_{-1}^{0}|\tilde{u}|^{4}dz\Big{)}^{\frac{1}{2}}\Big{]}\\
&&+|\tilde{Z}_{1}|_{\infty}\int_{M}\Big{[}|\bar{u}|\Big{(}\int_{-1}^{0}|\tilde{u}|^{2}|\nabla \tilde{u}|^{2}dz \Big{)}^{\frac{1}{2}}
\Big{(}\int_{-1}^{0}|\tilde{u}|^{2}dz\Big{)}^{\frac{1}{2}}\Big{]}\\
&\leq&|\bar{u}|_{L^{4}(M)}|\nabla(|\tilde{u}|^{2})|_{2}\Big{(}\int_{M}(\int_{-1}^{0}|\tilde{u}|^{4} dz)^{2}  \Big{)}^{\frac{1}{4}}\\
&&+|\tilde{Z}_{1}|_{\infty}|\bar{u}|_{L^{4}(M)}
|\nabla(|\tilde{u}|^{2})|_{2}\Big{(}\int_{M}(\int_{-1}^{0}|\tilde{u}|^{2} dz)^{2}  \Big{)}^{\frac{1}{4}}.
\end{eqnarray*}
Then by Minkowski inequality and interpolation inequalities, proceeding from above we have
\begin{eqnarray*}
\int_{\mho}\bar{u}\cdot\{[(\tilde{u}+\tilde{Z} )\cdot \nabla ]|\tilde{u}|^{2}\tilde{u}\}&\leq&|\bar{u}|_{L^{4}(M)}|\nabla(|\tilde{u}|^{2})|_{2}
\Big{(}\int_{-1}^{0}(\int_{M}|\tilde{u}|^{8} )^{\frac{1}{2}}\Big{)}^{\frac{1}{2}}\\
&&+|\tilde{Z}_{1}|_{\infty}|\bar{u}|_{L^{4}(M)}|\nabla(|\tilde{u}|^{2})|_{2}| \tilde{u}|_{4}\\
&\leq&|\bar{u}|_{L^{4}(M)}|\nabla(|\tilde{u}|^{2})|_{2}\Big{(}\int_{-1}^{0}|(|\tilde{u}|^{2}) |_{L^{2}(M)} \|(|\tilde{u}|^{2}) \|_{H^{1}(M)}
 dz\Big{)}^{\frac{1}{2}}\\
&& +|\tilde{Z}_{1}|_{\infty}|\bar{u}|_{L^{4}(M)}|\nabla(|\tilde{u}|^{2})|_{2}| \tilde{u}|_{4}\\
&\leq&|\bar{u}|_{L^{4}(M)}|\nabla(|\tilde{u}|^{2})|_{2}|(|\tilde{u}|^{2})|_{2}^{\frac{1}{2}}
[|(|\tilde{u}|^{2})|_{2}^{\frac{1}{2}}+|\nabla (|\tilde{u}|^{2}) |^{\frac{1}{2}}_{2}+|\partial_{z} (|\tilde{u}|^{2})|_{2}^{\frac{1}{2}}]\nonumber\\
&& +|\tilde{Z}_{1}|_{\infty}|\bar{u}|_{L^{4}(M)}|\nabla(|\tilde{u}|^{2})|_{2}| \tilde{u}|_{4},
\end{eqnarray*}
which together with  H$\ddot{o}$lder inequality implies
\begin{eqnarray}
\int_{\mho}\bar{u}\cdot\{[(\tilde{u}+\tilde{Z}_{1} )\cdot \nabla ]|\tilde{u}|^{2}\tilde{u}\}&\leq& \varepsilon (|\nabla (|\tilde{u}|^{2} )|^{2}_{2}+ |\partial_{z} (|\tilde{u}|^{2} )|^{2}_{2})\nonumber\\
&&+C(|\bar{u}|_{L^{4}(M)}^{2}+|\bar{u}|_{L^{4}(M)}^{4} )|\tilde{u}|^{4}_{4}+C\|\tilde{Z}_{1}\|_{2}^{2}|\bar{u}|_{L^{4}(M)}^{2}|\tilde{u}|^{2}_{4} .
\end{eqnarray}
By H$\ddot{o}$lder inequality,
\begin{eqnarray}
\int_{\mho}\bar{Z}_{1}\cdot
\{[(\tilde{u}+\tilde{Z}_{1} )\cdot \nabla ]|\tilde{u}|^{2}\tilde{u}\}&\leq& \varepsilon \int_{\mho}|\nabla \tilde{u} |^{2}
|\tilde{u}|^{2}+C|Z_{1}|^{4}_{L^{\infty}_{x}}|\tilde{u}|^{2}_{2}+C|Z|^{2}_{\infty}|\tilde{u}|^{4}_{4}\nonumber\\
&\leq& \varepsilon \int_{\mho}|\nabla \tilde{u} |^{2}
|\tilde{u}|^{2}+C\|Z_{1}\|^{4}_{2}|\tilde{u}|^{2}_{2}+C|Z_{1}|^{2}_{2}|\tilde{u}|^{4}_{4}.
\end{eqnarray}
Therefore, from $(5.114)-(5.116)$ we have
\begin{eqnarray*}
I_{3}&\leq& \varepsilon (|\nabla (|\tilde{u}|^{2} )|^{2}_{2}+ |\partial_{z} (|\tilde{u}|^{2} )|^{2}_{2}+
\int_{\mho}|\nabla \tilde{u} |^{2}
|\tilde{u}|^{2}  )\\
&&+C(|\bar{u}|_{L^{4}(M)}^{2}+ |\bar{u}|_{L^{4}(M)}^{4} + |Z_{1}|_{2}^{2}
 )|\tilde{u}|^{4}_{4} +C\|Z_{1}\|_{2}^{2}|\bar{u}|_{L^{4}(M)}^{2}|\tilde{u}|_{4}^{2}+C\|Z_{1}\|_{2}^{4}|\tilde{u}|_{2}^{2}.
\end{eqnarray*}
Similar to the estimate for $I_{2},$ we obtain
\begin{eqnarray*}
I_{4}&=&\int_{\mho}[\nabla \cdot (\tilde{u}+\tilde{Z}_{1} )](\bar{u}+\bar{Z}_{1} )\cdot|\tilde{u}|^{2}\tilde{u}\nonumber\\
&\leq&\varepsilon \Big{(}\int_{\mho}[|\nabla \tilde{u} |^{2}|\tilde{u}|^{2}+|\nabla( |\tilde{u}|^{2}) |^{2}_{2}+ |\partial_{z}( |\tilde{u}|^{2}) |^{2}_{2}]\Big{)} \nonumber\\
&&+C(|\bar{u}|_{L^{4}(M)}^{2}+|\bar{u}|_{L^{4}(M)}^{4}+\|Z_{1}\|^{2}_{2}   )|\tilde{u}|^{4}_{4} \nonumber\\
&&+C(\|Z_{1}\|_{2}^{\frac{16}{5}}+\|Z_{1}\|_{2}^{\frac{8}{5}}|\bar{u}|_{L^{4}(M)}^{\frac{8}{5}}  )|\tilde{u}|_{4}^{\frac{12}{5}}
+C(\|Z_{1}\|_{2}^{2}+\|Z_{1}\|_{2}|\bar{u}|_{L^{4}(M)}  )|\tilde{u}|_{4}^{3}.
\end{eqnarray*}
By H$\ddot{o}$lder inequality and interpolation inequality,
\begin{eqnarray*}
I_{5}&=&\int_{\mho}\{[(\bar{u}+\bar{Z}_{1} )\cdot \nabla ]\tilde{Z}_{1} \}|\tilde{u}|^{2}\tilde{u}\nonumber\\
&\leq & |\tilde{u} |^{3}_{6} |\nabla \tilde{Z}_{1}|_{4}(|\bar{u} |_{L^{4}(M)}+ |\bar{Z}_{1} |_{L^{4}(M)} )\nonumber\\
&\leq & |(|\tilde{u}|^{2} )  |_{2}^{\frac{3}{4}}\| (|\tilde{u}|^{2} ) \|_{1}^{\frac{3}{4}}\| \tilde{Z}_{1}\|_{2}
(|\bar{u} |_{L^{4}(M)}+ |\bar{Z}_{1} |_{L^{4}(M)} )\nonumber\\
&\leq & \varepsilon ( |\nabla (|\tilde{u}|^{2} )|^{2}_{2}+ |\partial_{z} (|\tilde{u}|^{2}) |^{2}_{2} )\nonumber\\
&&+C\|{Z}_{1}\|_{2}(|\bar{u}|_{4}+|{Z}_{1}|_{4} )  |\tilde{u}|^{3}_{4}+C\|{Z}_{1} \|^{\frac{8}{5}}_{2}
(|\bar{u} |^{\frac{8}{5}}_{L^{4}(M)}+ |{Z}_{1}  |^{\frac{8}{5}}_{L^{4}(M)}  )|\tilde{u}|_{4}^{\frac{12}{5}}
.
\end{eqnarray*}
Using H$\ddot{o}$lder inequality,  we have
\begin{eqnarray*}
I_{6}&=&-\int_{\mho}\overline{(\tilde{u}_{k}+\tilde{Z}_{1,k} )
(\tilde{u}_{j}+\tilde{Z}_{1,j} ) }\partial_{x_{k}}(| \tilde{u}|^{2}\tilde{u}_{j} )\nonumber\\
&\leq&\int_{M}\Big{(}\int_{-1}^{0} |\tilde{u}|^{2}dz  \int_{-1}^{0} | \nabla\tilde{u}| |\tilde{u}|^{2}dz  \Big{)}\nonumber\\
&&+|Z_{1}|_{\infty}\int_{M}(\int_{-1}^{0}|\tilde{u}| )\int_{-1}^{0}|\nabla \tilde{u}||\tilde{u}|^{2}dz+ |Z_{1}|_{\infty}^{2}||\nabla \tilde{u}| |\tilde{u}|  |_{2}  |\tilde{u}|_{2}
\nonumber\\
&\leq& \Big{(}\int_{\mho}| \nabla\tilde{u}|^{2} |\tilde{u}|^{2} \Big{)}^{\frac{1}{2}}
\Big{(} \int_{M} \Big{(}\int_{-1}^{0}|\tilde{u}|^{2}\Big{)}^{3}\Big{)}^{\frac{1}{2}} \nonumber\\
&&+|Z_{1}|_{\infty}\Big{(}\int_{\mho}|\nabla \tilde{u} |^{2}|\tilde{u}|^{2}\Big{)}^{\frac{1}{2}}
 \Big{(} \int_{\mho}(\int_{-1}^{0}|\tilde{u}|^{2} )^{2}  \Big{)}^{\frac{1}{2}}
 + C\|Z_{1}\|_{2}^{2}||\nabla \tilde{u}| |\tilde{u}|  |_{2}  |\tilde{u}|_{2},
 \end{eqnarray*}
which together with Minkowski inequality and interpolation inequality imply
\begin{eqnarray*}
I_{6}&\leq& \Big{(}\int_{\mho}| \nabla\tilde{u}|^{2} |\tilde{u}|^{2} \Big{)}^{\frac{1}{2}}
\Big{(}\int_{-1}^{0}\Big{(}\int_{M}|\tilde{u}|^{6}\Big{)}^{\frac{1}{3}} \Big{)}^{\frac{3}{2}} \nonumber\\
&&+|Z_{1}|_{\infty}\Big{(}\int_{\mho}| \nabla\tilde{u}|^{2} |\tilde{u}|^{2} \Big{)}^{\frac{1}{2}} |\tilde{u}|_{4}^{2}+ C\|Z_{1}\|_{2}^{2}||\nabla \tilde{u}| |\tilde{u}|  |_{2}  |\tilde{u}|_{2}\nonumber\\
&\leq& C\Big{(}\int_{\mho}| \nabla\tilde{u}|^{2} |\tilde{u}|^{2} \Big{)}^{\frac{1}{2}}
\Big{(}\int_{-1}^{0}|\tilde{u}|^{\frac{4}{3}}_{L^{4}(M)}\cdot\|\tilde{u}\|^{\frac{2}{3}}_{H^{1}(M)} dz \Big{)}^{\frac{3}{2}}\nonumber\\
&&+\|Z_{1}\|_{2}\Big{(}\int_{\mho}| \nabla\tilde{u}|^{2} |\tilde{u}|^{2} \Big{)}^{\frac{1}{2}} |\tilde{u}|_{4}^{2}
+ C\|Z_{1}\|_{2}^{2}||\nabla \tilde{u}| |\tilde{u}|  |_{2}  |\tilde{u}|_{2}  \nonumber\\
&\leq& C\Big{(}\int_{\mho}| \nabla\tilde{u}|^{2} |\tilde{u}|^{2} \Big{)}^{\frac{1}{2}}
\Big{(}\int_{0}^{1}|\tilde{u}|^{4}_{L^{4}(M)}dz\Big{)}^{\frac{1}{2}}\int_{0}^{1}\|\tilde{u}\|_{H^{1}(M)}dz\nonumber\\
&&+\|Z_{1}\|_{2}\Big{(}\int_{\mho}| \nabla\tilde{u}|^{2} |\tilde{u}|^{2} \Big{)}^{\frac{1}{2}} |\tilde{u}|_{4}^{2}
+ C\|Z_{1}\|_{2}^{2}||\nabla \tilde{u}| |\tilde{u}|  |_{2}  |\tilde{u}|_{2}.
\end{eqnarray*}
Therefore, using H$\ddot{o}$lder inequality again we arrive at
\begin{eqnarray*}
I_{6}&\leq&\varepsilon \int_{\mho}| \nabla\tilde{u}|^{2} |\tilde{u}|^{2}+C\|Z_{1}\|_{2}^{4}|\tilde{u}|_{2}^{2}
+C(\|Z_{1}\|_{2}^{2}+\| \tilde{u}\|^{2}_{1})|\tilde{u}|_{4}^{4}.
\end{eqnarray*}
By H$\ddot{o}$lder inequality and Sobolev imbedding theorem,
\begin{eqnarray*}
I_{7}&=&-\int_{\mho}(fk\times \tilde{Z}_{1})\cdot | \tilde{u}|^{2}\tilde{u}\leq C\|Z_{1}\|_{1}| \tilde{u}|_{4}^{3}.
\end{eqnarray*}
Analogously, we have
\begin{eqnarray*}
I_{8}&=&-\int_{\mho}\Big{(}\int_{-1}^{z}\theta d\lambda-\int_{-1}^{0}\int_{-1}^{z}\theta d\lambda dz \Big{)}\nabla \cdot |\tilde{u}|^{2}\tilde{u}\nonumber\\
&\leq& \Big{(} \int_{\mho}|\nabla\tilde{u}|^{2}|\tilde{u}|^{2}\Big{)}^{\frac{1}{2}}
\Big{(}\int_{\mho}|\tilde{u}|^{4}\Big{)}^{\frac{1}{4}}\Big{(}\int_{\mho}|\theta|^{4}\Big{)}^{\frac{1}{4}}\nonumber\\
&\leq& \varepsilon \int_{\mho}|\nabla\tilde{u}|^{2}|\tilde{u}|^{2}+C| \tilde{u}|_{4}^{2}|\theta|_{4}^{2}.
\end{eqnarray*}
Therefore, by $(5.110)$ and the estimates of $I_{1}-I_{8}$, we have
\begin{eqnarray}
&&\partial_{t} |\tilde{u}|^{4}_{4}+
\int_{\mho}\Big{(}|\nabla(|\tilde{u} |^{2}) |^{2}+ |\partial_{z}(|\tilde{u} |^{2}) |^{2}\Big{)}
+\int_{\mho}|\tilde{u}|^{2}(|\nabla \tilde{u} |^{2}+|\partial_{z}\tilde{u} |^{2})\nonumber\\
&\leq&C(\|Z_{1}\|_{2}^{2}+\|Z_{1}\|_{2}^{4}+\|u\|_{1}^{2}+|u|_{2}^{2}\|u\|_{1}^{2})|\tilde{u}|_{4}^{4}\nonumber\\
&&+C(\|Z_{1}\|_{1}+\|Z_{1}\|_{2}\|u\|_{1}+\|Z_{1}\|_{2}^{2} )|\tilde{u}|_{4}^{3}\nonumber\\
&&+ C(\|Z_{1}\|_{2}^{\frac{8}{5}}\|u\|_{1}^{\frac{8}{5}}+\|Z_{1}\|_{2}^{\frac{16}{5}}+\|Z_{1}\|_{2}^{\frac{8}{5}}+\|u\|_{1}^{\frac{8}{5}}   )|\tilde{u}|_{4}^{\frac{12}{5}}\nonumber\\
&&+C(\|Z_{1}\|_{2}^{4}+\|Z_{1}\|_{2}^{2}\|u\|_{1}^{2}+|\theta|_{4}^{2})|\tilde{u}|_{4}^{2}
\end{eqnarray}
and
\begin{eqnarray}
\partial_{t}|\tilde{u}|^{2}_{4}&\leq&C(\|Z_{1}\|_{2}^{2}+\|Z_{1}\|_{2}^{4}+\|u\|_{1}^{2}+|u|_{2}^{2}\|u\|_{1}^{2}+\|Z_{1}\|_{2}^{2} \|u\|_{1}^{2} )|\tilde{u}|_{4}^{2}\nonumber\\
&&+C(\|Z_{1}\|_{2}^{4}+\|Z_{1}\|_{2}^{2}\|u\|_{1}^{2}+|\theta|_{4}^{2}).
\end{eqnarray}
Subsequently, by Gronwall inequlity, $(5.110)$ and $(5.117)-(5.118)$, we conclude that
\begin{eqnarray}
&&\sup\limits_{t\in [0, \tau*)}|\tilde{u}(t)|_{4}^{4}+
\int_{0}^{\tau*}\int_{\mho}\Big{(}|\nabla(|\tilde{u} |^{2}) |^{2}+ |\partial_{z}(|\tilde{u} |^{2}) |^{2}\Big{)}ds
+\int_{0}^{\tau*}\int_{\mho}|\tilde{u}|^{2}(|\nabla \tilde{u} |^{2}+|\partial_{z}\tilde{u} |^{2})ds\nonumber\\
&\leq& C(\tau_{*}, Q, Z_{1}, Z_{2}, v_{0}, T_{0} ).
\end{eqnarray}
\par
\noindent{$5.2.3.\ \ H^{1}\ \mathbf{estimates}\ \mathbf{about}\    \theta\ \mathbf{and}\ u$. }
By integration by parts and $(5.96)-(5.97)$(for more detail, see $\cite{CT1} $ ), we have
\begin{eqnarray*}
\int_{M}fk\times \bar{u}\cdot \Delta \bar{u}=0,\ \ \ \
\int_{M}\nabla p_{s}\cdot\Delta \bar{u}=0
\end{eqnarray*}
and
\begin{eqnarray*}
\int_{M}\nabla \int_{-1}^{0}\int_{-1}^{z}\theta(x,y,\lambda, t)d\lambda dz\cdot\Delta \bar{u}=0.
\end{eqnarray*}
Then, taking the inner product of equation $(5.92)$ with $-\Delta \bar{u}$ in $L^{2}(M),$ we arrive at
\begin{eqnarray}
&&\frac{1}{2}\partial_{t} |\nabla \bar{u} |_{2}^{2}+|\Delta \bar{u}|_{2}^{2}=
\int_{M}(\bar{u}+\bar{Z}_{1})\cdot \nabla(\bar{u}+\bar{Z}_{1})\cdot \Delta\bar{u}
+\int_{M}\overline{(\tilde{u}+\tilde{Z}_{1})\cdot \nabla(\tilde{u}+\tilde{Z}_{1})}\cdot \Delta\bar{u}\nonumber\\
&&+\int_{M}\overline{(\tilde{u}+\tilde{Z}_{1}) \nabla\cdot(\tilde{u}+\tilde{Z}_{1})}\cdot \Delta\bar{u}+\int_{M}fk\times\bar{Z}_{1} \cdot \Delta\bar{u}+ \int_{\mho}Z_{1}\cdot \Delta\bar{u}.
\end{eqnarray}
By H$\ddot{o}$lder inequality and interpolation inequalities, we have
\begin{eqnarray}
&&\int_{M}(\bar{u}+\bar{Z}_{1})\cdot \nabla(\bar{u}+\bar{Z}_{1})\cdot \Delta\bar{u}\nonumber\\
&=&
\int_{M}\bar{u}\cdot \nabla(\bar{u}+\bar{Z}_{1})\cdot \Delta\bar{u}
+\int_{M}\bar{Z}_{1}\cdot \nabla(\bar{u}+\bar{Z}_{1})\cdot \Delta\bar{u}\nonumber\\
&\leq&C|\bar{u}|_{2}^{\frac{1}{2}}(|\nabla \bar{u}|_{2}+ |\nabla \bar{Z}_{1}|_{2} )|\Delta \bar{u}|_{2}^{\frac{3}{2}}
+C|Z_{1}|_{\infty}(|\nabla \bar{u}|_{2}+ |\nabla \bar{Z}_{1}|_{2} )|\Delta \bar{u}|_{2}\nonumber\\
&\leq & \varepsilon |\Delta\bar{u}|_{2}^{2}+C|\bar{u}|_{2}^{2}(|\nabla \bar{u}|_{2}^{4}+|\nabla \bar{Z}_{1}|_{2}^{4}   )
+C|Z_{1}|_{\infty}^{2}(|\nabla \bar{u}|_{2}^{2}+|\nabla \bar{Z}_{1}|_{2}^{2}   ).
\end{eqnarray}
Using  H$\ddot{o}$lder inequality , Minkowski inequality and Sobolev imbedding theorem, we obtain
\begin{eqnarray}
&&\int_{M}\overline{(\tilde{u}+\tilde{Z}_{1}) \nabla\cdot(\tilde{u}+\tilde{Z}_{1} )}\cdot \Delta\bar{u}
+\int_{M}\overline{(\tilde{u}+\tilde{Z}_{1}) \cdot \nabla(\tilde{u}+\tilde{Z}_{1} )}\cdot \Delta\bar{u}
\nonumber\\
&\leq&2\int_{M}\Big{(}\int_{-1}^{0}(|\tilde{u}|+|\tilde{Z}_{1}|  )( |\nabla\tilde{u}|+|\nabla\tilde{Z}_{1}| )    \Big{)}|\Delta \bar{u}|\nonumber\\
&\leq& 2|\Delta \bar{u}|_{2}\Big{[}\Big{(}\int_{\mho}|\tilde{u} |^{2}  |\nabla\tilde{u} |^{2}   \Big{)}^{\frac{1}{2}}
+\Big{(}\int_{\mho}|\tilde{u} |^{2}  |\nabla\tilde{Z}_{1} |^{2}   \Big{)}^{\frac{1}{2}}\nonumber\\
&&+\Big{(}\int_{\mho}|\tilde{Z}_{1} |^{2}  |\nabla\tilde{u} |^{2}   \Big{)}^{\frac{1}{2}}+
\Big{(}\int_{\mho}|\tilde{Z}_{1} |^{2}  |\nabla\tilde{Z}_{1} |^{2}   \Big{)}^{\frac{1}{2}}
\Big{]}\nonumber\\
&\leq& \varepsilon |\Delta \bar{u}|_{2}^{2}+C\Big{(}\int_{\mho}|\tilde{u} |^{2}  |\nabla\tilde{u} |^{2}
+|\tilde{u}|_{4}^{2}\|Z_{1}\|_{2}^{2}
+|Z_{1}|_{\infty }\|u \|_{1}^{2}+|Z_{1}|_{\infty} \|Z_{1} \|_{1}^{2}\Big{)}.
\end{eqnarray}
From $(5.120)-(5.122),$ we conclude that
\begin{eqnarray}
\partial_{t} |\nabla \bar{u} |_{2}^{2} +|\Delta \bar{u}|_{2}^{2}&\leq& C(|u|_{2}^{2}\|u\|_{1}^{2}+\|Z_{1}\|_{2}^{2})
|\nabla \bar{u}|_{2}^{2}+C(|Z_{1}|_{2}^{2}+\|Z_{1}\|_{2}^{3}+\|Z_{1}\|_{2}^{4}\nonumber\\
&&+\|Z_{1}\|_{1}^{4}|u|_{2}^{2} +\|Z_{1}\|_{2}^{2} |\tilde{u}|_{4}^{2}+\|Z_{1}\|_{2}\|u\|_{1}^{2} +\int_{\mho}|\tilde{u} |^{2}  |\nabla\tilde{u} |^{2}).
\end{eqnarray}
Therefore, by Proposition $2.1$, Proposition $2.2$, $(5.100), (5.119), (5.123)$ and Gronwall inequality, we arrive at
\begin{eqnarray}
\sup\limits_{t\in[0, \tau*)}|\nabla \bar{u} (t)|_{2}^{2}\leq
C(\tau_{*}, Q, Z_{1},Z_{2}, v_{0}, T_{0} ).
\end{eqnarray}
Taking the derivative, with respect to $z$, of equation $(5.86),$ we get
\begin{eqnarray}
&&\partial_{t} u_{z}-\Delta u_{z}-\partial_{zz} u_{z}+
[(u+Z_{1})\cdot\nabla ](u_{z}+\partial_{z}Z_{1})+[(u_{z}+\partial_{z}Z_{1})\cdot\nabla ](u+Z_{1})\nonumber\\
&&-(u_{z}+\partial_{z} Z_{1})\nabla \cdot(u+Z_{1})+\varphi(u+Z_{1})(u_{zz}+\partial_{zz}Z_{1})\nonumber\\
 &&+fk\times( u_{z}+\partial_{z}Z_{1})-\nabla \theta-\beta \partial_{z}Z_{1}=0.
\end{eqnarray}
By integration by parts, we obtain
\begin{eqnarray}
\int_{\mho}\Big{(}(u\cdot \nabla )u_{z}+\varphi(u)  u_{zz} \Big{)}\cdot u_{z}=0 .
\end{eqnarray}
Similarly,  using H$\ddot{o}$lder inequality, interpolation inequality and Sobolev embedding theorem, we have
\begin{eqnarray}
\int_{\mho}[(u\cdot\nabla)\partial_{z} Z_{1}]\cdot u_{z}&\leq& \|Z_{1}\|_{2}|u|_{4}|u_{z}|_{4}\nonumber\\
&\leq& \|Z_{1}\|_{2}|u|_{4}|u_{z}|_{4}^{\frac{1}{4}}
[|\nabla u_{z}|_{2}^{\frac{3}{4}}+|u_{zz}|_{2}^{\frac{3}{4}}+|u_{z}|_{2}^{\frac{3}{4}} ]\nonumber\\
&\leq& \varepsilon (|\nabla u_{z}|_{2}^{2}+|u_{zz}|_{2}^{2} )+C\|Z_{1}\|_{2}^{2}(|\tilde{u}|_{4}^{2}+|\nabla \bar{u}|_{2}^{2})+C|u_{z}|_{2}^{2}.
\end{eqnarray}
By virtue of H$\ddot{o}$lder inequality, we have
\begin{eqnarray}
\int_{\mho}(Z_{1}\cdot \nabla )(u_{z}+\partial_{z} Z_{1})\cdot u_{z}
&\leq& |Z_{1}|_{\infty}(|\nabla u_{z}|_{2}+ |\nabla \partial_{z} Z_{1}|_{2})| u_{z}|_{2}  \nonumber\\
&\leq& \varepsilon |\nabla u_{z} |_{2}^{2}+ C\|Z_{1}\|_{2}^{2}(|u_{z}|_{2}^{2}+1) .
\end{eqnarray}
Thanks to  H$\ddot{o}$lder inequality, interpolation inequality and Sobolev imbedding theorem, we reach
\begin{eqnarray*}
&& \int_{\mho}[(u_{z} \cdot \nabla)u]\cdot u_{z}
+\int_{\mho}[(u_{z} \cdot \nabla)Z_{1}]\cdot u_{z}\nonumber\\
&\leq & C\int_{\mho} |u||u_{z}||\nabla u_{z}|
+C\int_{\mho} |Z_{1}||u_{z}||\nabla u_{z}|
\nonumber\\
&\leq& C|\nabla u_{z} |_{2}|u|_{4}|u_{z}|_{4}
+C|Z_{1}|_{\infty}|u_{z}|_{2}|\nabla u_{z} |_{2}
\nonumber\\
&\leq& C|\nabla u_{z} |_{2}|u|_{4}|u_{z}|_{2}^{\frac{1}{4}}(|\nabla u_{z} |_{2}^{\frac{3}{4}}+ |\partial_{z} u_{z} |_{2}^{\frac{3}{4}}+
|u_{z}|_{2}^{\frac{3}{4}})
+C|Z_{1}|_{\infty}|u_{z}|_{2}|\nabla u_{z} |_{2}\nonumber\\
&\leq&\varepsilon (|\nabla u_{z} |_{2}^{2}+ |\partial_{z} u_{z} |_{2}^{2})
+C(|u|_{4}^{8}+\|Z_{1} \|_{2}^{8}+1) |u_{z}|_{2}^{2}.
\end{eqnarray*}
By integration by parts, H$\ddot{o}$lder inequality and interpolation inequality, we have
\begin{eqnarray*}
&&\int_{\mho}[(\partial_{z} Z_{1} \cdot \nabla)(u+Z_{1})]\cdot u_{z}\nonumber\\
&=&\int_{\mho}[\partial_{z}Z_{1}\cdot\nabla (u_{j}+Z_{1,j})  ]\partial_{z}u_{j}
=-\int_{\mho}\nabla\cdot[(\partial_{z}u_{j})( \partial_{z}Z_{1})](u_{j}+Z_{1,j})\nonumber\\
&\leq& |\nabla \partial_{z}Z_{1}|_{2}|u_{z}|_{4}|u+Z_{1}|_{4}+|\nabla \partial_{z}u|_{2}|\partial_{z}Z_{1}|_{4}|u+Z_{1}|_{4}\nonumber\\
&\leq&C\|Z_{1}\|_{2}|u_{z}|_{2}^{\frac{1}{4}}[ |\nabla \partial_{z}u|_{2}^{\frac{3}{4}}+|u_{zz}|_{2}^{\frac{3}{4}}+|u_{z}|_{2}^{\frac{3}{4}}  ]|u+Z_{1}|_{4}\nonumber\\
&&+ C|\nabla \partial_{z}u|_{2}\|Z_{1}\|_{2}|u+Z_{1}|_{4}\nonumber\\
&\leq&\varepsilon(|\nabla \partial_{z}u|_{2}^{2}+|u_{zz}|_{2}^{2})+C\|Z_{1}\|_{2}^{2}|u+Z_{1}|_{4}^{2}+|u_{z}|_{2}^{2},
\end{eqnarray*}
where $j=1,2, Z_{1}=(Z_{1,1},Z_{1,2})$ and $u=(u_{1},u_{2}) .$
Since
\begin{eqnarray*}
&&\int_{\mho}[(u_{z}+\partial_{z} Z_{1})\cdot \nabla (u+Z_{1})]\cdot u_{z}\\
&=&  \int_{\mho}[(u_{z} \cdot \nabla)u]\cdot u_{z}
+\int_{\mho}[(u_{z} \cdot \nabla)Z_{1}]\cdot u_{z}+\int_{\mho}[(\partial_{z} Z_{1} \cdot \nabla)(u+Z_{1})]\cdot u_{z},
\end{eqnarray*}
we conclude from the above two bounds that
\begin{eqnarray}
&&\int_{\mho}[(u_{z}+\partial_{z} Z_{1})\cdot \nabla (u+Z_{1})]\cdot u_{z}\nonumber\\
&\leq&\varepsilon(|\nabla \partial_{z}u|_{2}^{2}+|u_{zz}|_{2}^{2})+ C(|u|_{4}^{8}+\|Z_{1} \|_{2}^{8}+1) |u_{z}|_{2}^{2}+C\|Z_{1}\|_{2}^{2}|u+Z_{1}|_{4}^{2}\nonumber\\
&\leq& \varepsilon(|\nabla \partial_{z}u|_{2}^{2}+|u_{zz}|_{2}^{2})
+C(|\tilde{u}|_{4}^{8}+|\nabla \bar{u} |_{2}^{8}+\|Z_{1} \|_{2}^{8}+1) |u_{z}|_{2}^{2}\nonumber\\
 &&+C\|Z_{1}\|_{2}^{2}(|\tilde{u}+Z_{1}|_{4}^{2}+|\nabla\bar{u}|_{2}^{2}).
\end{eqnarray}
Proceeding as in $(5.129)$, we get
\begin{eqnarray}
&&\int_{\mho}\nabla \cdot(u+Z_{1})(u_{z}+\partial_{z} Z_{1})\cdot u_{z}\nonumber\\
&\leq &\varepsilon (|\nabla u_{z} |_{2}^{2}+ |u_{zz} |_{2}^{2})
+C(|\bar{u}|_{4}^{8}+|\nabla\bar{u}|_{2}^{2}+\|Z_{1} \|_{2}^{8}+1) |u_{z}|_{2}^{2}.
\end{eqnarray}
Using integration by parts,
\begin{eqnarray*}
\int_{\mho}\varphi (u) (\partial_{zz}Z_{1} )u_{z}=\int_{\mho}(\nabla\cdot u) (\partial_{z}Z_{1})u_{z}-\int_{\mho}\varphi (u) ( \partial_{z}Z_{1})u_{zz}.
\end{eqnarray*}
Similarly,  we have
\begin{eqnarray*}
\int_{\mho}(\nabla\cdot u) \partial_{z}Z_{1}\cdot u_{z}=-\int_{\mho}u\cdot [(\nabla \partial_{z}Z_{1} )\cdot u_{z}  ]- \int_{\mho}u\cdot [ \partial_{z}Z_{1} \cdot (\nabla u_{z})  ] .
\end{eqnarray*}
Then, due to H$\ddot{o}$lder inequality, Sobolev imbedding theorem and interpolation inequality,
\begin{eqnarray*}
\int_{\mho}(\nabla\cdot u) \partial_{z}Z_{1}\cdot u_{z}&\leq&|\int_{\mho}u\cdot [(\nabla \partial_{z}Z_{1} )\cdot u_{z}  ]|+| \int_{\mho}u\cdot [ \partial_{z}Z_{1} \cdot (\nabla u_{z})  ]|\nonumber\\
& \leq& |\nabla \partial_{z}Z_{1}|_{2}|u|_{4}|u_{z}|_{4}+|\nabla u_{z}|_{2}|\partial_{z}Z_{1}|_{4}|u|_{4}\nonumber\\
& \leq&\|Z_{1}\|_{2}(|\tilde{u} |_{4}+|\nabla \bar{u}|_{2} )|u_{z}|_{2}^{\frac{1}{4}}[|\nabla u_{z} |_{2}^{\frac{3}{4}}+|u_{zz}|_{2}^{\frac{3}{4}}
+|u_{z}|_{2}^{\frac{3}{4}} ]\nonumber\\
&&+|\nabla u_{z}|_{2}\|Z_{1}\|_{2}(|\tilde{u} |_{4}+|\nabla \bar{u}|_{2} )\nonumber\\
& \leq& \varepsilon ( |\nabla u_{z} |_{2}^{2}+|u_{zz}|_{2}^{2}  )+C|u_{z}|_{2}^{2}+C\|Z_{1}\|_{2}^{2}(|\tilde{u}|_{4}^{2}+|\nabla \bar{u} |_{2}^{2} ).
\end{eqnarray*}
By H$\ddot{o}$lder inequality, Minkowski inequality and interpolation inequality, we obtain
\begin{eqnarray*}
\int_{\mho} \varphi (u) (\partial_{z}Z_{1}) u_{zz}&\leq& |\partial_{z}Z_{1}|_{\infty} |\nabla u|_{2}|u_{zz}|_{2}\leq \varepsilon |u_{zz}|_{2}^{2}+C\|Z_{1}\|_{3}^{2}|\nabla u|_{2}^{2}.
\end{eqnarray*}
Therefore, by the above argument we reach
\begin{eqnarray}
\int_{\mho}\varphi (u) (\partial_{zz}Z_{1}) u_{z}\leq \varepsilon (|u_{zz}|_{2}^{2}+| \nabla u_{z}|_{2}^{2})+C|u_{z}|_{2}^{2}
+C\|Z_{1}\|_{2}^{2}(|\tilde{u}|_{4}^{2}+|\nabla \bar{u}|_{2}^{2} )+C\|Z_{1}\|_{3}^{2}|\nabla u|_{2}^{2}.
\end{eqnarray}
According to H$\ddot{o}$lder inequality, Sobolev imbedding theorem and interpolation inequality, we obtain
\begin{eqnarray}
\int_{\mho}\varphi (Z_{1})(u_{zz}+\partial_{zz}Z_{1})u_{z}&\leq& |u_{zz}|_{2}|u_{z}|_{4}|\varphi (Z_{1})|_{4}+|\partial_{zz}Z_{1}|_{2}|\nabla Z_{1}|_{4}|u_{z}|_{4}\nonumber\\
&\leq& C(|u_{zz}|_{2}\|Z_{1}\|_{2}+\|Z_{1}\|_{2}^{2})|u_{z}|_{2}^{\frac{1}{4}}(|u_{zz}|_{2}^{\frac{3}{4}}+|\nabla u_{z} |_{2}^{\frac{3}{4}} )\nonumber\\
&\leq& \varepsilon (|u_{zz}|_{2}^{2}+ |\nabla u_{z} |_{2}^{2})+C\|Z_{1}\|_{2}^{8}|u_{z}|_{2}^{2}+C\|Z_{1}\|_{2}^{\frac{16}{5}}|u_{z}|_{2}^{\frac{2}{5}}.
\end{eqnarray}
Taking the inner product of the equation $(5.125)$ with $u_{z}$ in $(L^{2}(\mho))^{2}$ and by $(5.126)-(5.132),$ we obtain
\begin{eqnarray}
&&\partial_{t} |u_{z}|_{2}^{2}+|\nabla u_{z} |_{2}^{2}+| u_{zz} |_{2}^{2}\nonumber\\
&\leq&
C(|\tilde{u}|_{4}^{8}+|\nabla \bar{u}|_{2}^{8}+\|Z_{1}\|_{2}^{8} +1 )|u_{z}|_{2}^{2}+C(|\tilde{u}|_{4}^{2}+|\nabla {u}|_{2}^{2} +1)\|Z_{1}\|_{3}^{2}.
\end{eqnarray}
Therefore, from Gronwall inequality,  $(5.100), (5.119)$ and $(5.124),$ we reach
\begin{eqnarray}
\sup\limits_{t\in[0,\tau*)}|u_{z}(t)|_{2}^{2}+\int_{0}^{\tau*}(|\nabla u_{z}(s) |_{2}^{2}+|u_{zz} (s)|_{2}^{2})ds
\leq C(\tau_{*}, Q, Z_{1}, Z_{2}, v_{0}, T_{0} ).
\end{eqnarray}
By   H$\ddot{o}$lder inequality, interpolation inequality and Sobolev inequality,
\begin{eqnarray}
&&\int_{\mho}\{[(u+Z_{1})\cdot \nabla ](u+Z_{1} ) \}\cdot\Delta u\nonumber\\
&\leq& |\Delta u|_{2}|\nabla u|_{4}|u+Z_{1}|_{4}+|\Delta u|_{2}|\nabla Z_{1}|_{4}|u+Z_{1}|_{4}\nonumber\\
&\leq& |\Delta u|_{2}|\nabla u|_{2}^{\frac{1}{4}}(|\Delta u|_{2}^{\frac{3}{4}}+ |\nabla u_{z}|_{4}^{\frac{3}{4}}+|\nabla u|_{2}^{\frac{3}{4}}  )
|u+Z_{1}|_{4}\nonumber\\
&&+|\Delta u|_{2}\|Z_{1}\|_{2}|u+Z_{1}|_{4}\nonumber\\
&\leq& \varepsilon (|\Delta u|_{2}^{2}+|\nabla u_{z}|_{2}^{2})+ C\| Z_{1}\|_{2}^{2}(|u+Z_{1}|_{4}^{2} ) \nonumber\\
&&+C(|u+Z_{1}|_{4}^{2} +|u+Z_{1}|_{4}^{8} )
|\nabla u|_{2}^{2}\nonumber\\
&\leq& \varepsilon (|\Delta u|_{2}^{2}+|\nabla u_{z}|_{2}^{2})+C \| Z_{1}\|_{2}^{2}(|\tilde{u}|_{4}^{2}+|\nabla \bar{u}|_{2}^{2}+\|Z_{1}\|_{1}^{2} )\nonumber\\
&&+C(|\tilde{u}|_{4}^{2}+|\nabla \bar{u}|_{2}^{2}+\|Z_{1}\|_{1}^{2} +|\tilde{u}|_{4}^{8}+ |\nabla \bar{u}|_{2}^{8}+\|Z_{1}\|_{1}^{8} )
|\nabla u|_{2}^{2}.
\end{eqnarray}
Due to H$\ddot{o}$lder inequality,
\begin{eqnarray*}
&&\int_{\mho}[\varphi(u+Z_{1})( u_{z}+\partial_{z}Z_{1}) ]\cdot \Delta u\nonumber\\
&\leq&\int_{M}\int_{-1}^{0}|\nabla \cdot u +\nabla \cdot Z_{1}|dz\int_{-1}^{0}|u_{z}+\partial_{z}Z_{1}|\cdot|\Delta u|dz
\nonumber\\
&\leq&|\Delta u|_{2}\Big{(}\int_{M}(\int_{-1}^{0}|\nabla \cdot u +\nabla \cdot Z_{1}|   dz )^{4} \Big{)}^{\frac{1}{4}}
\Big{(}\int_{M} (\int_{-1}^{0}| u_{z}+\partial_{z}Z_{1}|^{2} dz  )^{2} \Big{)}^{\frac{1}{4}}.
\end{eqnarray*}
Then by Minkowsky inequality and interpolation inequality, as well as Sobolev embedding theorem and H$\ddot{o}$lder inequality,
\begin{eqnarray}
&&\int_{\mho}[\varphi(u+Z_{1})( u_{z}+\partial_{z}Z_{1}) ]\cdot \Delta u\nonumber\\
&\leq& C|\Delta u|_{2}\Big{(}\int_{-1}^{0}[| \nabla u|_{L^{2}(M)}^{\frac{1}{2}}(| \Delta u|_{L^{2}(M)}^{\frac{1}{2}}
+ | \nabla u|_{L^{2}(M)}^{\frac{1}{2}}      )+\|Z_{1}\|_{2}]  dz    \Big{)}\nonumber\\
&&\cdot \Big{(}\int_{-1}^{0}[|u_{z}|_{L^{2}(M)}(|\nabla u_{z}|_{L^{2}(M)}+ |u_{z}|_{L^{2}(M)}  )+\|Z_{1}\|_{2}^{2}] dz  \Big{)}^{\frac{1}{2}}\nonumber\\
&\leq & \varepsilon (|\Delta u|_{2}^{2}+|\nabla u_{z}|_{2}^{2} dz   )
+C(\|Z_{1}\|_{2}^{2}+ \|Z_{1}\|_{2}^{4})  |u_{z}|_{2}^{2}+C\|Z_{1}\|_{2}^{4}\nonumber\\
&&+  C|\nabla u|_{2}^{2}(|u_{z}|_{2}^{2}+\|Z_{1}\|_{2}^{2}+|u_{z}|_{2}|\nabla u_{z}|_{2}+|u_{z}|_{2}^{4}+|u_{z}|_{2}^{2}|\nabla u_{z}|_{2}^{2}+\|Z_{1}\|_{2}^{4}).
\end{eqnarray}
Since
\begin{eqnarray*}
\int_{\mho}(fk\times u)\cdot\Delta u=0\ \ \mathrm{and}\ \ \int_{\mho}\nabla p_{s}\cdot\Delta u=0,
\end{eqnarray*}
taking the inner product of equation $(5.86)$ with $-\Delta u$ in $(L^{2}( \mho))^{2},$ by $(5.135)-(5.136)$ we reach
\begin{eqnarray}
&&\partial_{t} |\nabla u|_{2}^{2}+|\Delta u|_{2}^{2}\nonumber\\
&\leq&C(\|Z_{1}\|_{2}^{4} +\|\theta\|_{1}^{2}+\|Z_{1}\|_{2}^{2}|\tilde {u}|_{4}^{2}+\|Z_{1}\|_{2}^{2}|\nabla \bar{u}|_{2}^{2}+\|Z_{1}\|_{2}^{2} |u_{z}|_{2}^{2}+\|Z_{1}\|_{2}^{4}|u_{z}|_{2}^{2})  \nonumber\\
&&+C(\|Z_{1}\|_{2}^{2}+\|Z_{1}\|_{1}^{8}+\|Z_{1}\|_{2}^{4}+  |\tilde{u}|_{4}^{2}+|\tilde{u}|_{4}^{8}+ |\nabla \bar{u}|_{2}^{2}+ |\nabla \bar{u}|_{2}^{8} \nonumber\\
&&+ |u_{z}|_{2}^{2}+  |u_{z}|_{2}^{4}+ |\nabla u_{z}|_{2}^{2}+ |u_{z}|_{2}^{2}|\nabla u_{z}|_{2}^{2})
|\nabla u|_{2}^{2}.
\end{eqnarray}
By $(5.100),(5.119), (5.124), (5.134)$ and thanks to Gronwall inequality, we obtain
\begin{eqnarray}
\sup\limits_{t\in[0, \tau*)}|\nabla u(t)|_{2}^{2}+\int_{0}^{\tau*}|\Delta u(t)|_{2}^{2}dt
\leq C(\tau_{*}, Q, Z_{1}, Z_{2}, v_{0}, T_{0}).
\end{eqnarray}
Taking the inner product of the equation $(5.87)$ with $-\Delta \theta-\theta_{zz} $ in $L^{2}( \mho),$
and making an analogous argument in $(5.138)$ we get
\begin{eqnarray}
&&\frac{1}{2}\partial_{t}(|\nabla \theta|_{2}^{2}+|\theta_{z}|_{2}^{2}+\alpha|\nabla \theta(z=0) |_{2}^{2}  )\nonumber\\
&&+|\Delta \theta|_{2}^{2}+2( |\nabla \theta_{z} |_{2}^{2}+\alpha|\nabla \theta(z=0) |_{2}^{2} )+|\theta_{zz}|_{2}^{2}\nonumber\\
&=&\int_{\mho}[(u+Z_{1} )\cdot \nabla (\theta+Z_{2})+\varphi(u+Z_{1} )(\theta_{z}+\partial_{z}Z_{2})-Q+\beta Z_{2}]       (\Delta \theta+\theta_{zz} )\nonumber\\
&\leq& \varepsilon (| \Delta \theta|_{2}^{2}+|\theta_{zz}|_{2}^{2}+|\nabla \theta_{z}|_{2}^{2} )+C|Q|_{2}^{2}+C\|Z_{2}\|_{3}^{2}(1+\|u\|_{1}^{2}+\|Z_{1}\|_{1}^{2})\nonumber\\
&&+C(|\nabla u|_{2}^{2}+ |\nabla Z_{1}|_{2}^{2} )(|\Delta u|_{2}^{2}+ |\Delta Z_{1}|_{2}^{2} )|\theta_{z}|_{2}^{2}\nonumber\\
&&+C (|\tilde{u}|_{4}^{2}+|\nabla \bar{u}|_{2}^{2}+ \|Z_{1}\|_{1}^{2}+|\tilde{u}|_{4}^{8}+|\nabla \bar{u}|_{2}^{8}+ \|Z_{1}\|_{1}^{8})|\nabla \theta|_{2}^{2}.
\end{eqnarray}
Therefore, by  Gronwall inequality,  we conclude that
\begin{eqnarray}
\sup\limits_{t\in [0,\tau*)}\|\theta(t)\|_{1}^{2}+ \int_{0}^{\tau*}\| \theta(t)\|_{2}^{2} dt
\leq C(\tau_{*}, Q, Z_{1}, Z_{2}, T_{0}).
\end{eqnarray}
$\mathbf{Acknowledgments}.$ This work was started during the author's visit to Professor Zhao Dong in July 2014. When the
first version of this manuscript was finished, it was reported in Chinese Academy of Science in August 2015. The author is grateful for the kind invitations and the hospitality of Professor Zhao Dong. The author also thanks Professor Zhenqing Chen for interesting discussions and kind suggestions during the visit to University of Washington in 2016.  He is also
deeply grateful for Professor Boling Guo's long-time and constant help, encouragement and
kind introduction of hydrodynamics into his interest.

\def\refname{ Bibliography}

\end{document}